\newtheorem{thm}{Theorem}[section]
\newtheorem{pro}[thm]{Proposition}
\newtheorem{lm}[thm]{Lemma}
\newtheorem{cor}[thm]{Corollary}
\newtheorem{conj}[thm]{Conjecture}
\numberwithin{equation}{section}
\theoremstyle{remark}
\newtheorem{exa}[thm]{Example}
\newtheorem{rem}[thm]{Remark}
\theoremstyle{definition}
\newtheorem{defn}[thm]{Definition}
\DeclareMathOperator*{\End}{End}
\DeclareMathOperator*{\reg}{reg}
\DeclareMathOperator*{\Irr}{Irr}
\DeclareMathOperator*{\disc}{disc}
\DeclareMathOperator*{\Gal}{Gal}
\DeclareMathOperator*{\ad}{ad}
\DeclareMathOperator*{\Nrd}{Nrd}
\DeclareMathOperator*{\der}{der}
\DeclareMathOperator*{\diag}{diag}
\DeclareMathOperator*{\Stab}{Stab}
\DeclareMathOperator*{\Hom}{Hom}
\newcommand{\s}{\simeq}
\newcommand{\tsigma}{\widetilde{\sigma}}
\newcommand{\esq}{\mathcal{E}^{2}}
\newcommand{\CC}{\mathbb{C}}
\newcommand{\QQ}{\mathbb{Q}}
\newcommand{\ii}{\mathbf{\textit{i}}}
\def\bA{\bold A}
\def\bG{\bold G}
\def\bP{\bold P}
\def\bM{\bold M}
\def\bN{\bold N}
\newcommand{\tG}{\widetilde{G}}
\newcommand{\tbG}{\widetilde{\bold G}}
\newcommand{\tP}{\widetilde{P}}
\newcommand{\tbP}{\widetilde{\bold P}}
\newcommand{\tM}{\widetilde{M}}
\newcommand{\tbM}{\widetilde{\bold M}}
\newcommand{\tN}{\widetilde{N}}
\newcommand{\tbN}{\widetilde{\bold N}}
\newcommand{\tbA}{\widetilde{\bold A}}
\title[Transfer of $R$-groups between $p$-adic inner forms of $SL_n$]{Transfer of $R$-groups between $p$-adic inner forms of $SL_n$} 
\author[Kwangho Choiy and David Goldberg]{Kwangho Choiy and David Goldberg} 
\address{Kwangho Choiy\\
Department of Mathematics\\
Oklahoma State University\\
Stillwater, OK 74078-1058\\
U.S.A.}
\email{kwangho.choiy@okstate.edu}
\address{David Goldberg\\
Department of Mathematics \\
Purdue University\\
West Lafayette, IN 47907\\
U.S.A.}
\email{goldberg@math.purdue.edu}
\date{\today}
\begin{document}
\maketitle  

\begin{abstract} We study the Knapp-Stein $R$--groups for inner forms of the split group $SL_n(F),$ with $F$ a $p$--adic field of characteristic zero. Thus, we consider the groups $SL_m(D),$ with $D$ a central division algebra over $F$  of dimension $d^2,$ and $m=n/d.$ We use the generalized Jacquet-Langlands correspondence and results of the first named author to describe the zeros of Plancherel measures.  Combined with a study of the behavior of the stabilizer of representations by elements of the Weyl group we are able to determine the Knapp-Stein $R$--groups in terms of those for $SL_n(F).$  We show the $R$--group for the inner form embeds as a subgroup of the $R$--group for the split form, and we characterize the quotient. We are further able to show the Knapp-Stein $R$--group is isomorphic to the Arthur, or Endoscopic $R$--group as predicted by Arthur.  Finally, we give some results on multiplicities and actions of Weyl groups on $L$--packets.

\end{abstract}
\setcounter{tocdepth}{10}
\tableofcontents
\section{Introduction} \label{intro}

We embark on a study of the non-discrete tempered spectrum of non-quasi-split inner forms of groups of classical types.  Our approach is to use a generalized version of the Jacquet-Langlands correspondence to transfer information about the tempered spectrum of the split form to the inner form.  Here we study inner forms of $G=SL_n(F),$ with $F$ a $p$--adic field of characteristic zero.  Any such inner form is of the form $G'=SL_m(D),$ where $D$ is a central division algebra of degree $d^2$ over $F,$ and $m=n/d.$   
Let $\tG=GL_n(F)$ and $\tG'=GL_m(D).$  Then $\tG$ and $\tG'$ are inner forms. If $\tP$ is a parabolic subgroup of $\tG,$ then $P=\tP\cap G$ is a parabolic subgroup of $G,$ and every parabolic subgroup of $G$ arises in this way.  Similarly, if $\tP'$ is a parabolic subgroup of $\tG',$ then $P'=\tP\cap G'$ is a parabolic subgroup of $G',$ and all parabolic subgroups of $G'$ arise in this way.  Now, let $\tP'=\tM'\tN'$ be the Levi decomposition of $\tP'.$  Then $\tM'\simeq GL_{m_1}(D)\times\dots\times GL_{m_k}(D),$ for some partition  $m_1+\dots+m_k=m.$ Then there is a corresponding parabolic subgroup $\tP=\tM\tN$ of $\tG,$ with 
$\tM=GL_{m_1d}(F)\times\dots\times GL_{m_kd}(F).$  Now let $M=\tM\cap G$ and $M'=\tM'\cap G'.$  There is a generalized Jacquet-Langlands correspondence between $L$--packets on $M$ and $L$--packets on $M'.$  That is, since
$M$ and $M'$ are inner forms, and 
$$\prod_{j}SL_{m_jd}(F)\subset M\subset\prod_j GL_{m_jd}(F)$$
we have a description of the $L$--packets of $M'$ in terms of those for $M.$
Briefly, we see there is a Jacquet-Langlands correspondence from $\tM$ to $\tM',$ given by the product of the Jacquet-Langlands correspondences
from $GL_{m_id}(F)$ to $GL_{m_i}(D)$ \cite{dkv,rog83}. Let $\tsigma$ be an irreducible  discrete series representation of $\tM,$ and let 
$$\phi:W_F\longrightarrow\,^LM=\prod_j GL_{m_jd}(\CC)$$ be the Langlands parameter given by \cite{ht01,he00}.
Let $\tsigma'$ be the representation corresponding to $\tsigma$ through the Jacquet-Langlands correspondence, and note $\phi$ is also the Langlands parameter for $\tsigma'.$
Then using \cite{gk82}, we see the components of $\tsigma|_M$ form an $L$--packet of $\Pi_\phi(M),$ with Langlands parameter $\operatorname{pr}\circ\phi: W_F\rightarrow\,^LM,$
where $\operatorname{pr}$ is the projection from $GL_n(\CC)$ to $PGL_n(\CC).$  Similarly, the components of $\tsigma'|_{M'}$ form an $L$--packet, $\Pi_{\phi}(M')$ of $M',$ and we say $\Pi_\phi(M)$ and $\Pi_\phi(M')$ correspond by a generalized Jacquet-Langlands correspondence.  In fact, such a correspondence always exists between discrete series of inner forms $H$ and $H'$ if
$$\prod_{j=i}^k SL_{\ell_j}(F)\subset H\subset \prod_{j=1}^k GL_{\ell_j}(F),$$ \cite{choiy1}.
If $\sigma\in \Pi_\phi(M)$ and $\sigma'\in\Pi_\phi(M')$ then we say $\sigma$ and $\sigma'$ correspond under this generalized Jacquet-Langlands type correspondence.

Let $A$ be the split component of $M,$ and $A'$ the split component of $M'.$  We denote the reduced roots by $\Phi(P,A)=\Phi(P',A').$   Let $W_M=W_{M'}$ be the Weyl group $N_G(A)/A.$  We identify these two Weyl groups for the purpose of this exposition.
The Knapp-Stein $R$--group, $R_\sigma,$ along with a $2$--cocycle, determines the structure of the (normalized)  induced representation $i_{GM}(\sigma),$ and similarly we have the Knapp-Stein $R$--group $R_{\sigma'}$ attached to $i_{G'M'}(\sigma').$  The $R$--group, $R_\sigma$  can be realized as a quotient of two subgroups of $W_M.$  The first is the stabilizer $W(\sigma)$ of $\sigma$ in $W_M.$  The second, $W'_\sigma$ is generated by the root reflections in the zeros of the rank 1 Plancherel measures.
By the results of \cite{choiy1}, we have the transfer of the Plancherel measures (cf. Proposition \ref{pro for delta identity} and its proof).  In particular, for any $\beta\in\Phi(P,A),$ we have $\mu_\beta(\sigma)=\mu_\beta(\sigma').$  This shows $W'_\sigma=W'_{\sigma'}.$  Thus, it is left to describe $W(\sigma).$  In \cite{go94sl} the second named author showed
 \begin{equation}
 \label{stabilizer intro1}
 W(\sigma)=\left\{w\in W_M |{^w}\tsigma\simeq\tsigma \otimes \eta\text{ for some character } \eta\in\left(\tM/M\right)^D\right\},
 \end{equation}
 where the superscript $D$ indicates the Pontrjagin dual.  As in \cite{sh83,go94sl} it is straightforward to see
\begin{equation}\label{stabilizer intro2}
W(\sigma')\subset \left\{w\in W_{M'} |{^w}\tsigma'\simeq\tsigma' \otimes \eta\text{ for some character }\eta\in\left(\tM'/M'\right)^D\right\}.
\end{equation}
 The proof of equality of equation \eqref{stabilizer intro1}  in \cite{go94sl} relied on multiplicity one of restriction from $GL_n(F)$ to $SL_n(F).$  We know this multiplicity one property fails for restriction from $GL_m(D)$ to $SL_m(D),$ \cite{hs11},    It is straightforward to see the right hand sides of equations \eqref{stabilizer intro1} and \eqref{stabilizer intro2} are equal (cf. Proposition \ref{analogue of goldberg lemma} and its proof) under the identification of  $W_M$ and $W_{M'}.$  Thus, $W(\sigma')\subset W(\sigma),$ and hence $R_{\sigma'}\subset R_\sigma$ (cf.  Theorem \ref{analogue of goldberg thm2.4}).  We also give a characterization of the quotient $R_\sigma/R_{\sigma'}$ (cf. Remark \ref{meaning for W*}).

  We note the study of $R$--groups is crucial to understanding the elliptic tempered spectrum of reductive groups \cite{arthur elliptic}. Further, the isomorphism of Knapp-Stein $R$--groups and the Arthur $R$ groups play an important role in the trace formula, and in particular in the transfer of automorphic forms \cite{arthur book}.

 In Section 2 we recall the background information we need on reducibility and $R$--groups. We also discuss the structure of inner forms and specify the results we need to $SL_n(F)$ and its inner forms.  We describe the generalized Jacquet-Langlands correspondence, as described in \cite{choiy1}.  In Section 3 we describe the elliptic $L$--packets of Levi subgroups of $SL_n(F)$ and its inner forms.  In Section 4 we prove our main results on the  structure of $R$--groups for the inner forms of $SL_n(F).$  Finally, in Section 5 we discuss some results on multiplicity which arise for the inner forms of $SL_n(F).$  In an appendix also give a description of the action of characters of $M$ and $M'$ on the $L$--packets of these two groups.
 
As we finalized this manuscript we noted a preprint by K.F. Chao and W.W. Li  \cite{chaoli}, posted to the Arxiv a few days prior, which addresses the same problem.  We note our results are derived independently and based on the work of the first named author on the transfer of Plancherel measures via the generalized Jacquet-Langlands correspondence, which is a different approach than in
\cite{chaoli}, where they work mostly with the $R$--groups on the dual side. They are able to give a description of the cocyle $\eta$ and in particular give examples where $\eta$ is non-trivial.  This gives examples of induced representations with abelian $R$--groups which decompose with multiplicity greater than one.  We  also point out example 6.3.4 of \cite{chaoli} shows the containment $R_{\sigma'}\subset R_{\sigma}$ can be proper.

\section*{Acknowledgements}
The authors wish to thank Mahdi Asgari, Wee Teck Gan, Wen-Wei Li, Alan Roche, Gordan Savin, Freydoon Shahidi, Sug Woo Shin, and Shaun Stevens for communications and discussions which improved the quality of the results in this manuscript.


\section{Preliminaries} \label{pre}
In this section, we recall background materials
and review known facts.
\subsection{Notation}
Throughout this paper, $F$ denotes a $p$-adic field of characteristic $0,$ that is, a finite extension of $\QQ_p,$ with an algebraic closure $\bar{F}.$
Let $\bG$ be a connected reductive group over $F.$  We let $G=\bG(F)$ and use similar notation for other algebraic groups.
Fix a minimal $F$-parabolic subgroup $\bP_0$ of $G$ with Levi component $\bM_0$ and unipotent radical $\bN_0.$ Let $\bA_0$ be the split component of $\bM_0,$ that is, the maximal $F$-split torus in the center of $\bM_0.$ Let $\Delta$ be the set of simple roots of $\bA_0$ in $\bN_0.$ Let $\bP \subseteq \bG$ be a standard (that is, containing $\bP_0$) $F$-parabolic subgroup of $\bG.$ 
Write $\bP=\bM\bN$ with its Levi component $\bM=\bM_{\theta} \supseteq \bM_0$ generated by a subset $\theta \subseteq \Delta$ and its unipotent radical $\bN \subseteq \bN_0.$
We denote by $\delta_P$ the modulus character of $P.$ 

Let $\bA_\bM$ be the split component of $\bM.$ Denote by $X^{*}(\bM)_F$ the group of $F$-rational characters of $\bM.$ We denote by $\Phi(P, A_M)$ the reduced roots of $\bP$ with respect to $\bA_\bM.$ Denote by $W_\bM = W(\bG, \bA_\bM) := N_\bG(\bA_\bM) / Z_\bG(\bA_\bM)$ the Weyl group of $\bA_\bM$ in $\bG,$ where $N_\bG(\bA_\bM)$ and $Z_\bG(\bA_\bM)$ are respectively the normalizer and the centralizer of $\bA_\bM$ in $\bG.$ For simplicity, we write $\bA_0 = \bA_{\bM_0}.$
 
By abuse of terminology, we make no distinction between the set of isomorphism classes with the set of representatives. Let $\Irr(M)$ denote the set of isomorphism classes of irreducible admissible representations of $M=\bM(F).$ For any $\sigma \in \Irr(M),$ we write $\ii_{GM} (\sigma)$ for the normalized (twisted by $\delta_{P}^{1/2}$) induced representation.
Denote by $\Pi_{\disc}(M)$ the set of discrete series representations of $M.$

We denote by $H^i(F, \bG) := H^i(\Gal (\bar{F} / F), \bG(\bar{F}))$
the (nonabelian) Galois cohomology of $\bG.$ Denote by $W_F$ the Weil group of $F$ and $\Gamma_F := \Gal(\bar{F} / F).$ Fixing $\Gamma_F$-invariant splitting data, we define the Langlands dual ($L$-group) of $G$ as a semi-direct product $^{L}G := \widehat{G} \rtimes \Gamma_F$ (see \cite[Section 2]{bo79}).  For any topological group $H,$ we denote by $\pi_0(H)$ the group $H/H^\circ$ of connected components of $H,$ where $H^\circ$ denotes the identity component of $H.$  By $Z(H)$ we will denote the center of $H.$ 
We write $(H )^D$ for the group $\Hom(H , \CC^{\times})$ of all continuous characters. We say a character is unitary if its image is in the unit circle $S^1 \subset \CC^{\times}.$ 

For a central division algebra $D$ over $F,$ we let  $GL_{m}(D)$ denote the group of all invertible elements of $m \times m$ matrices over $D,$ and let $SL_{m}(D)$ be  the subgroup of elements in $GL_{m}(D)$ whose reduced norm is $1$ (see \cite[Sections 1.4 and 2.3]{pr94}).

For any finite set $S,$ we write $\left|{S}\right|$ for the cardinality of $S.$ For two integers $a$ and $b,$ $a \big{|} b$ means that $b$ is divisible by $a.$
\subsection{$R$-groups} \label{section for def of R}
For $\sigma\in\Irr(M)$ and $w\in W_M,$ we let ${^w}\sigma$ be the representation given by ${^w}\sigma(x)=\sigma(w^{-1}xw).$
Given $\sigma \in \Pi_{\disc}(M),$ we define
\[ W(\sigma) := \{ w \in W_M : {^w}\sigma \s \sigma \}.
\]
Set $\Delta'_\sigma = \{ \beta \in \Phi(P, A_M) : \mu_{\beta} (\sigma) = 0 \},$ where $\mu_{\beta} (\sigma)$ is the Plancherel measure attached to $\sigma$ \cite[p.1108]{go94}. Denote by $W'_{\sigma}$ the normal subgroup of $W(\sigma)$ generated by the reflections in the roots of $\Delta'_\sigma.$ 
\textit{The Knapp-Stein $R$-group} is defined by
\[
R_{\sigma} := \{ w \in W(\sigma) : w \beta > 0, \; \forall \beta \in \Delta'_\sigma \}.
\]
We write $C(\sigma):={\End}_{G}(\ii_{GM} (\sigma))$ for the algebra of $G$-endomorphisms of $\ii_{GM} (\sigma).$ We call it the commuting algebra of $\ii_{GM} (\sigma).$

\begin{thm}[Knapp-Stein \cite{ks72}; Silberger \cite{sil78, sil78cor}]  \label{thm for Knapp-Stein-Sil}
For any $\sigma \in \Pi_{\disc}(M),$ we have
\[
W(\sigma) = R(\sigma) \ltimes W'_{\sigma}. 
\]
Moreover, $C(\sigma) \s \CC[R(\sigma)]_{\eta},$ the group algebra of $R(\sigma)$ twisted by a $2$-cocycle $\eta,$ which is explicitly defined in terms of group $W(\sigma).$
\end{thm}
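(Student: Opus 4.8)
The statement is the classical structure theorem of Knapp--Stein and Silberger; rather than reproving it I will record the shape of the argument, since this formalism underlies everything below. The plan is to organize the proof around the standard intertwining operators. First I would recall that for an unramified character $\chi$ of $M$ and $w\in W_M$ there is an operator
$$\mathcal{A}(w,\sigma\otimes\chi)\colon \ii_{GM}(\sigma\otimes\chi)\longrightarrow \ii_{GM}\bigl({}^{w}(\sigma\otimes\chi)\bigr),\qquad \bigl(\mathcal{A}(w)f\bigr)(g)=\int_{N\cap w^{-1}\bar{N}w} f(w^{-1}ng)\,dn,$$
convergent for $\chi$ in a suitable cone and meromorphically continued. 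For $w\in W(\sigma)$ I would fix $T_{w}\in\Hom_{M}({}^{w}\sigma,\sigma)$, a one-dimensional space so that $T_{w}$ is unique up to $\CC^{\times}$, and compose; after Harish-Chandra's normalization by the appropriate $c$-functions this yields $\mathcal{A}(w,\sigma)\in C(\sigma)=\End_{G}(\ii_{GM}(\sigma))$, holomorphic, invertible and unitary at $\chi=1$, well defined up to a scalar, and satisfying a cocycle identity $\mathcal{A}(w_{1},\sigma)\mathcal{A}(w_{2},\sigma)=\eta(w_{1},w_{2})\,\mathcal{A}(w_{1}w_{2},\sigma)$ for a $\CC^{\times}$-valued $2$-cocycle $\eta$ on $W(\sigma)$ that absorbs the scalar ambiguities. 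Since $\sigma\in\Pi_{\disc}(M)$, the representation $\ii_{GM}(\sigma)$ is unitary, so $C(\sigma)$ is a finite-dimensional $C^{*}$-algebra, in particular semisimple.

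The proof then rests on two substantive inputs, both due to Harish-Chandra with Silberger supplying the non-archimedean details, which I would quote rather than prove: \emph{(i)} the operators $\{\mathcal{A}(w,\sigma):w\in W(\sigma)\}$ span $C(\sigma)$, and two of them, indexed by $w_{1}$ and $w_{2}$, are linearly dependent precisely when $w_{1}w_{2}^{-1}\in W'_{\sigma}$; and \emph{(ii)} for each $\beta\in\Delta'_{\sigma}$, that is, each reduced root with $\mu_{\beta}(\sigma)=0$, equivalently (by Harish-Chandra's reducibility criterion, which in particular forces $s_{\beta}\in W(\sigma)$) each $\beta$ along which the rank-one induced representation is irreducible, the operator $\mathcal{A}(s_{\beta},\sigma)$ is scalar. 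Input \emph{(i)} is the analytic core, resting on the constant term and the asymptotics of matrix coefficients of $\ii_{GM}(\sigma)$; input \emph{(ii)} then follows from Schur's lemma applied to the irreducible rank-one induced representation.

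Granting \emph{(i)} and \emph{(ii)}, what remains is formal. From \emph{(ii)} and the cocycle identity, $\mathcal{A}(w',\sigma)$ is scalar for all $w'\in W'_{\sigma}$, so $\mathcal{A}(ww',\sigma)\in\CC^{\times}\mathcal{A}(w,\sigma)$. I would then note that $W'_{\sigma}$ is normal in $W(\sigma)$: for $w\in W(\sigma)$ one has $\mu_{w\beta}(\sigma)=\mu_{w\beta}({}^{w}\sigma)=\mu_{\beta}(\sigma)$, so $W(\sigma)$ permutes $\Delta'_{\sigma}$ and hence conjugates the generating reflections of $W'_{\sigma}$ among themselves. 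A reflection subgroup such as $W'_{\sigma}$ acts simply transitively on the Weyl chambers of the root subsystem it generates, and $R(\sigma)=\{w\in W(\sigma):w\beta>0\ \text{for all}\ \beta\in\Delta'_{\sigma}\}$ is exactly the stabilizer in $W(\sigma)$ of the distinguished such chamber; the standard structure theory of reflection groups then gives $W(\sigma)=R(\sigma)\ltimes W'_{\sigma}$, which is the first assertion. For the second, choosing the coset representatives inside $R(\sigma)$, the operators $\{\mathcal{A}(w,\sigma):w\in R(\sigma)\}$ span $C(\sigma)$ by \emph{(i)} and are linearly independent by the dependence criterion in \emph{(i)}, hence form a basis; and they multiply according to $\eta$ once one folds into the cocycle the scalars produced whenever a product $w_{1}w_{2}$ with $w_{1},w_{2}\in R(\sigma)$ must be rewritten as $w_{3}w'$ with $w_{3}\in R(\sigma)$ and $w'\in W'_{\sigma}$. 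Hence $C(\sigma)\s\CC[R(\sigma)]_{\eta}$.

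The hard part is input \emph{(i)}: that the normalized intertwining operators exhaust the commuting algebra, equivalently that $\dim C(\sigma)=|R(\sigma)|$ and not something smaller. That is the genuinely analytic heart of the theorem, where Harish-Chandra's Plancherel theory is indispensable; the reflection-group bookkeeping that converts it into the semidirect product and the twisted group algebra is, by contrast, routine.
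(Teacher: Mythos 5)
The paper gives no proof of this theorem: it is quoted as classical background, with the semidirect product decomposition attributed to Knapp--Stein and the dimension theorem for $C(\sigma)$ in the $p$-adic case to Silberger. Your outline is a correct and faithful summary of exactly that standard argument (normalized intertwining operators, the commuting-algebra and dimension theorems as the analytic inputs, Schur's lemma on the irreducible rank-one inductions for $\beta\in\Delta'_{\sigma},$ and the reflection-group bookkeeping giving $W(\sigma)=R(\sigma)\ltimes W'_{\sigma}$ and $C(\sigma)\s\CC[R(\sigma)]_{\eta}$), so there is nothing to compare beyond noting that you have reconstructed the cited proof rather than an alternative to it.
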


Let $\phi : W_F \times SL_2(\CC) \rightarrow \widehat{M}$ be an $L$-parameter.
We denote by $C_{\phi}(\widehat{M})$ the centralizer of the image of $\phi$ in $\widehat{M}$ and by $C_{\phi}(\widehat{M})^{\circ}$ its identity component. Fix a maximal torus $T_{\phi}$ in $C_{\phi}(\widehat{M})^{\circ}.$ We define
\[
W_{\phi}^{\circ} := N_{C_{\phi}(\widehat{M})^{\circ}} (T_{\phi}) /  T_{\phi},~ W_{\phi} := N_{C_{\phi}(\widehat{M})} (T_{\phi}) /  T_{\phi} ~ \text{and}~ R_{\phi}:=W_{\phi}/W_{\phi}^{\circ}.
\]
Note that $W_{\phi}$ can be identified with a subgroup of $W_M$ (see \cite[p.45]{art89ast}). Let $\Pi_{\phi}(M)$ be the $L$-packet associated to the $L$-parameter $\phi.$ For $\sigma \in \Pi_{\phi}(M),$ we set
\[
W_{\phi, \sigma}^{\circ} :=  W_{\phi}^{\circ} \cap W(\sigma),~  W_{\phi, \sigma} :=  W_{\phi} \cap W(\sigma) ~ \text{and}~ R_{\phi, \sigma}:=W_{\phi, \sigma}/W_{\phi, \sigma}^{\circ}.
\]
We call $R_{\phi, \sigma}$ \textit{the Arthur $R$-group}. 
\begin{conj}
Let $\sigma \in \Pi_{\phi}(M)$ be a discrete series representation. Then we have
\[
R_{\sigma} \s R_{\phi, \sigma}.
\]
\end{conj}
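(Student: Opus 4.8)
The plan is to verify the conjectured isomorphism $R_\sigma \simeq R_{\phi,\sigma}$ by reducing it, via the generalized Jacquet-Langlands correspondence, to the corresponding statement for $SL_n(F)$, where it is already known (e.g. from the work on the split case underlying \cite{go94sl}), and then using the two key inputs of this paper: the transfer of Plancherel measures (Proposition~\ref{pro for delta identity}) and the comparison of stabilizers (Proposition~\ref{analogue of goldberg lemma}). First I would recall that on the dual side nothing changes: if $\sigma' \in \Pi_\phi(M')$ corresponds to $\sigma \in \Pi_\phi(M)$ under the generalized Jacquet-Langlands correspondence, then they share the same $L$-parameter $\operatorname{pr}\circ\phi$, hence the same centralizer group $C_\phi(\widehat M)$, the same $W_\phi$ and $W_\phi^\circ$. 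So the only thing that can differ between $R_{\phi,\sigma}$ and $R_{\phi,\sigma'}$ is the intersection with $W(\sigma)$ versus $W(\sigma')$; but by Proposition~\ref{analogue of goldberg lemma} and Theorem~\ref{analogue of goldberg thm2.4} we have an explicit handle on how $W(\sigma')$ sits inside $W(\sigma)$, namely as the subgroup of $w$ for which the relevant character $\eta \in (\tM'/M')^D$ can be chosen; this lets one compare $W_{\phi,\sigma'} = W_\phi \cap W(\sigma')$ with $W_{\phi,\sigma}$ directly.

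The heart of the argument is then a direct identification of the two $R$-group filtrations on both sides. On the Knapp-Stein side, $W(\sigma) = R_\sigma \ltimes W'_\sigma$ by Theorem~\ref{thm for Knapp-Stein-Sil}, and $W'_\sigma$ is generated by reflections in the roots $\beta$ with $\mu_\beta(\sigma)=0$. On the Arthur side, $W_{\phi,\sigma} = R_{\phi,\sigma}\ltimes W_{\phi,\sigma}^\circ$, and $W_{\phi,\sigma}^\circ$ — being the Weyl group of a connected reductive centralizer — is likewise a reflection group whose generating reflections are indexed by coroots in $C_\phi(\widehat M)^\circ$. The bridge, already available for $GL_n$ and hence for $SL_n$ via \cite{gk82, go94sl}, is the statement that $\mu_\beta(\sigma)=0$ if and only if the coroot $\beta^\vee$ lies in the root system of $C_\phi(\widehat M)^\circ$, i.e. $W'_\sigma = W_{\phi,\sigma}^\circ$ inside the common Weyl group $W_M = W_{M'}$. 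Since by Proposition~\ref{pro for delta identity} we have $\mu_\beta(\sigma)=\mu_\beta(\sigma')$ for all $\beta \in \Phi(P,A)$, the set $\Delta'_{\sigma'}$ equals $\Delta'_\sigma$, so $W'_{\sigma'} = W'_\sigma = W_{\phi,\sigma}^\circ = W_{\phi,\sigma'}^\circ$. Granting this, the isomorphism follows formally: intersecting the equality $W(\sigma') \cap W_\phi = W_{\phi,\sigma'}$ with the common normal reflection subgroup $W'_{\sigma'}=W_{\phi,\sigma'}^\circ$ and passing to the quotient gives $R_{\sigma'} = W(\sigma')/W'_{\sigma'} \supseteq \bigl(W(\sigma')\cap W_\phi\bigr)/W_{\phi,\sigma'}^\circ = R_{\phi,\sigma'}$, and the reverse inclusion — equivalently the statement that $W(\sigma') \subseteq W_\phi$, i.e. every element of the stabilizer is realized in the dual centralizer — is exactly where the structure of $\Pi_\phi(M')$ established in Section~3 (elliptic $L$-packets of Levi subgroups) is used, together with the fact that for $SL_n(F)$ this is classical.

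The main obstacle I anticipate is precisely this last point: showing $W(\sigma) = W_\phi$ (equivalently $W_{\phi,\sigma} = W(\sigma)$, since $W_\phi \subseteq W_M$ acts through $W(\sigma)$). For the split group $SL_n(F)$ this rests on multiplicity one of restriction from $GL_n$, which as the introduction emphasizes \emph{fails} for $SL_m(D)$, so one cannot simply transport the $GL_n$ argument. Instead one must argue that, even in the presence of higher multiplicities, the component-group description of $\Pi_\phi(M')$ arising from $\tsigma'|_{M'}$ together with the Jacquet-Langlands identification $\phi_{\tsigma'} = \phi_{\tsigma}$ forces $W_\phi$ — which is intrinsic to $\phi$ and therefore the same for the split and non-split forms — to coincide with $W(\sigma)$ on the nose; here the characterization of $W(\sigma)$ in terms of twists by characters of $(\tM/M)^D$ from \eqref{stabilizer intro1}, \eqref{stabilizer intro2} and the identification of right-hand sides in Proposition~\ref{analogue of goldberg lemma} do the work, because $(\tM/M)^D \simeq (\tM'/M')^D$ under the Jacquet-Langlands correspondence. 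A secondary technical point is to confirm that the $2$-cocycle $\eta$ of Theorem~\ref{thm for Knapp-Stein-Sil} is compatible with this identification so that the isomorphism $C(\sigma') \simeq \CC[R_{\phi,\sigma'}]_\eta$ is the expected one; but for the statement of the conjecture as written — an isomorphism of $R$-\emph{groups} only — this is not strictly needed and can be deferred.
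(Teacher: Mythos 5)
Your plan rests on two identifications that you assert rather than prove, and neither is actually how the paper establishes the conjecture in the cases it handles. The first is the claim that $W'_{\sigma}=W^{\circ}_{\phi,\sigma}$ inside $W_M$, justified by the assertion that $\mu_{\beta}(\sigma)=0$ if and only if $\beta^{\vee}$ lies in the root system of $C_{\phi}(\widehat{M})^{\circ}$, and that this is ``already available'' from \cite{gk82,go94sl}. That equivalence is essentially the content of Arthur's conjecture at the level of the normal reflection subgroups; it is not proved in those references in this form, and the paper never uses it. The second is the claim $W(\sigma)=W_{\phi}$. The paper proves only the inclusion $W_{\phi}\subseteq W(\sigma)$ (Lemma \ref{equality for Arthur R-group SL}), which is all that is needed, since $W_{\phi,\sigma}:=W_{\phi}\cap W(\sigma)$ then equals $W_{\phi}$ and hence $R_{\phi,\sigma}=R_{\phi}$; the reverse inclusion is neither established nor required. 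More seriously, you treat the split case as known and propose to transfer it to the inner form, but establishing the split case \emph{is} the substance of Theorem \ref{conc}, so the reduction is circular as stated; and for the inner form you correctly identify the failure of multiplicity one as the obstruction to $W(\sigma')=W_{\phi}$ but do not overcome it --- nor does the paper, which only obtains the embedding $R_{\sigma'}\hookrightarrow R_{\phi}$ (Theorem \ref{analogue of goldberg thm2.4}) and notes that it can be proper.

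The paper's actual route is entirely on the character-group side and avoids both of your pivotal steps. It combines Goldberg's isomorphism $R_{\sigma}\simeq \bar{L}(\tsigma)/X(\tsigma)$ (Theorem \ref{goldberg thm2.4}) with the Gelbart--Knapp identifications $X(\tsigma)\simeq S_{\phi}(\widehat{M})$ and $X({\ii}_{\tG\tM}(\tsigma))\simeq S_{\phi}(\widehat{G})$ (Proposition \ref{Gelbart-Knapp}), and then proves the new equality $\bar{L}(\tsigma)=X({\ii}_{\tG\tM}(\tsigma))$ (Proposition \ref{pro 2 for thm 1}) using irreducibility of ${\ii}_{\tG\tM}(\tsigma)$ for discrete series on $GL_n$ and the Langlands classification. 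This yields $R_{\sigma}\simeq S_{\phi}(\widehat{G})/S_{\phi}(\widehat{M})$, which Arthur identifies with $R_{\phi}$, and the single inclusion $W_{\phi}\subseteq W(\sigma)$ converts $R_{\phi}$ into $R_{\phi,\sigma}$. If you want to salvage your approach, you would need to supply an actual proof of the Plancherel-measure/coroot equivalence for the Levi subgroups of $SL_n$ --- at which point you would in effect be reproving Theorem \ref{conc} by a harder route.
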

\subsection{Inner Forms} \label{inner forms}
Let $\bG$ and $\bG'$ be connected reductive groups over $F.$ We say that $\bG$ and $\bG'$ are \textit{$F$-inner forms} with respect to an $\bar{F}$-isomorphism $\varphi: \bG' \overset{\sim}{\rightarrow} \bG$ if $\varphi \circ \tau(\varphi)^{-1}$ is an inner automorphism ($g \mapsto xgx^{-1}$) defined over $\bar{F}$ for all $\tau \in \Gal (\bar{F} / F)$ (see \cite[p.851]{shin10}). If there is no confusion, we often omit the references to $F$ and $\varphi.$ Set $\bG^{\ad}:=\bG / Z(\bG).$ We note \cite[p.280]{kot97} that there is a bijection between $H^1(F, \bG^{\ad})$ and the set of isomorphism classes of $F$-inner forms of $\bG.$

Suppose $\bG$ is either $GL_n$ or $SL_n$ over $F.$ Then the set of isomorphism classes of $F$-inner forms of $\bG$ is in bijection with the subgroup $Br(F)_n$ of $n$-torsion elements in the Brauer group $Br(F)$ (see \cite[Section 2.3]{choiy1}). Hence the group $G'$ of $F$-rational points of any $F$-inner form $\bG'$ of $GL_n$ (respectively,  $SL_n$)  is of the form $GL_m(D)$ (respectively,  $SL_m(D)$) where $D$ is a central division algebra of dimension $d^{2}$ over $F$ and $n = md.$
\subsection{Structure of Levi Subgroups of $SL_n$ and its inner forms} \label{structure of levi}
Let $\tbG$ be $GL_n$ over $F.$ Let  $\widetilde{\bold P}_0$ be the minimal $F$--parabolic subgroup   of upper triangular matrices in $\tbG.$ Set the minimal $F$-Levi subgroup $\tbA_0 = \tbM_0$ to be the group of diagonal matrices, and set the unipotent radical $\tbN_0$ to be the group of unipotent upper triangular matrices. Let $\widetilde{\Delta}$ be the simple roots of $\tbA_0$ in $\tbN_0,$ so $\widetilde\Delta =\{e_i - e_{i+1} : 1 \leq i \leq n-1 \}.$ 
Let $\tbM=\tbM_{\theta}$ be an $F$-Levi subgroup of $\tbG$ for some subset $\theta \subseteq \Delta.$ Then $\tM$ is of the form $\prod_{i=1}^{k} GL_{n_i}(F)$ for some positive integer $k$ and $n_i.$ 

Let $\tbG'$ be an $F$-inner form of $\tbG,$ and let $\tbM'$ be an $F$-Levi subgroup of $\tbG'$ that is an $F$-inner form of $\tbM.$ Then $\tG'$ is of the form $GL_m(D)$ for some central division algebra $D$ of dimension $d^{2}$ over $F$ with $n = md,$ and $\tM'$ is of the form $\prod_{i=1}^{k} GL_{m_i}(D)$ with $n_i = m_i d_i.$ 

Let $\bG$ be $SL_n$ over $F.$ Let $\bP_0=\tbP_0 \cap \bG$ be our fixed minimal $F$--parabolic subgroup of $\bG.$ Set the minimal $F$-Levi subgroup $\bA_0 = \bM_0$ to be $\tbM_0 \cap \bG,$ and set the unipotent radical $\bN_0$ to be $\tbN_0 \cap \bG = \tbN_0.$ We identify  the simple roots $\Delta$ with $\widetilde{\Delta}.$
Let $\bM$ be an $F$-Levi subgroup of $\bG.$ Then we have
\begin{equation} \label{cond on M}
\prod_{i=1}^{k} SL_{n_i} \subseteq \bM \subseteq \prod_{i=1}^{k} GL_{n_i}. 
\end{equation} 

Let $\bG'$ be an $F$-inner form of $\bG,$ and Let $\bM'$ be an $F$-Levi subgroup of $\bG'$ that is an $F$-inner form of $\bM'$ Then $G'$ is of the form $SL_{m}(D),$ and $M'$ is of the form
\begin{equation} \label{cond on M' in body}
\prod_{i=1}^{k} SL_{m_i}(D) \subseteq M' \subseteq \prod_{i=1}^{k} GL_{m_i}(D).
\end{equation} 
The Weyl groups $W_M,$ $W_{\tM},$ $W_{M'}$ and $W_{\tM'}$ can be all identified and realized as a subgroup of the group $S_k$ of permutations on $k$ letters.

\begin{rem} \label{conn center}
We have the following commutative diagram of algebraic groups: 
\[
\begin{CD}
@. 1 @. 1 @. 1 @.  \\
@.      @VVV          @VVV   @VVV  @.\\
1 @>>> \bM_{\der} @>>> \bM @>>> {GL_1}^{k-1} @>>> 1 \\
@.      @|          @VVV   @VVV  @.\\
1 @>>> \tbM_{\der} @>>> \tbM @>>> {GL_1}^{k} @>>> 1 \\
@.      @VVV          @VVV   @VVV  @.\\
@. 1 @>>> \tbM / \bM @>{\s}>> GL_1 @>>> 1 \\
@.      @.          @VVV   @VVV  @.\\
@.  @. 1 @. 1 @. 
\end{CD}
\]
The maps $\bM \rightarrow {GL_1}^{k-1}$ and $\tbM \rightarrow {GL_1}^{k}$ are 
\[
(g_1, g_2, \cdots, g_{k-1}, g_) \mapsto (\det g_1, \det g_2, \cdots, \det g_{k-1})
\]
and 
\[
(g_1, g_2, \cdots, g_k) \mapsto (\det g_1, \det g_2, \cdots, \det g_k),
\] 
respectively,  with $\det$  the determinant map. Further, the map ${GL_1}^{k} \rightarrow GL_1$ is the product 
\[
(a_1, a_2,  \cdots, a_k) \mapsto a_1 \cdot a_2  \cdots a_k
\]
so that the composite $\tbM \rightarrow GL_1^k \rightarrow GL_1$ becomes the product of determinants.
We then obtain an exact sequence (the middle vertical one)
\begin{equation} \label{tilde M}
\begin{CD}
1 @>>> \bM @>>> \tbM @>>> GL_1 @>>> 1
\end{CD}
\end{equation}
which yields
\[
\begin{CD}
1 @>>> \CC^{\times} @>>> Z({\widehat{\tM}}) \s (\CC^{\times})^k @>>> Z({\widehat{M}}) @>>> 1
\end{CD}
\]
(cf. \cite[(1.8.1) p.616]{kot84}). We note that the injective map $\CC^{\times} \hookrightarrow (\CC^{\times})^k$ is a diagonal embedding. Thus the center $Z({\widehat{M}}) \s (\CC^{\times})^k / \CC^{\times}$ is connected. As another point of view, it follows from \cite[(1.8.3) p.616]{kot84} that the center $Z({\widehat{M}})$ is connected since $\bM_{\der} = \prod_{i=1}^k SL_{n_i}$ is simply connected. 
\end{rem}
\begin{rem} \label{L-groups}
We also have the following commutative diagram of $L$-groups:
\[
\begin{CD}
1 @>>> \CC^{\times} @>{\s}>> \ker @>>> 1 @. @. 
\\
@.      @VVV        @VVV   @VVV  @.\\
1 @>>> (\CC^{\times})^k @>>> \widehat{\tM} @>>> \widehat{(\tM_{\der})} @>>> 1 
\\
@.      @VVV          @VVV   @|  @.
\\
1 @>>> (\CC^{\times})^k / \CC^{\times} @>>> \widehat{M} @>>> \widehat{(M_{\der})} @>>> 1 \\
@.      @VVV          @VVV   @VVV  @.\\
@.  1 @. 1 @. 1 @. 
\end{CD}
\]
So, we have an exact sequence (the middle vertical one)
\[
\begin{CD}
1 @>>> \CC^{\times} @>>> \widehat{\tM} @>>> \widehat{M} @>>> 1.
\end{CD}
\]
Moreover, from \eqref{tilde M} we see
\[
\CC^{\times} = \widehat{GL_1} = \widehat{(\tM/M)}.
\]  
Hence, we have 
\begin{equation} \label{hat M}
\begin{CD}
1 @>>> \widehat{(\tM/M)} @>>> \widehat{\tM} @>>> \widehat{M} @>>> 1
\end{CD}
\end{equation}
is also exact.
\end{rem}
\begin{rem} \label{rem for isom between two quotients}
All arguments in Remarks \ref{conn center} and \ref{L-groups} hold for the $F$-inner form $\bM'$ of $\bM$ as well.
Further, we have a group isomorphism 
\begin{equation} \label{gp iso}
\tM/M \s F^{\times} \s \tM'/M'.
\end{equation}
Indeed, applying Galois cohomology to \eqref{tilde M}, we have
\[
1 \longrightarrow \bM(F) \longrightarrow \tbM(F) \longrightarrow F^{\times} \longrightarrow H^1(F, \bM) \longrightarrow H^1(F, \tbM) \longrightarrow 1.
\]
Note that $H^1(F, \tbM) = 1$ from \cite[Lemma 2.2]{pr94}. Also, it is well known (cf. \cite[p.270]{kot97}) that $H^1(F, \bM) \hookrightarrow H^1(F, \bG)$ (true for any connected reductive $F$-group $\bG$ and its $F$-Levi subgroup $\bM$). Since $\bG=SL_n$ is simply connected semi-simple, we have $H^1(F, \bM) = H^1(F, \bG) = 1$ due to \cite[Theorem 6.4]{pr94}. Therefore, we have $\tM/M \s F^{\times}.$ This argument is also true for $M'$ since $H^1(F, \tbM') = 1$ (\cite[Lemma 2.8]{pr94}) and $\bG'$ is simply connected semi-simple as well. Thus, we have the isomorphism \eqref{gp iso}.
\end{rem}

\subsection{Local Jacquet-Langlands correspondence} \label{local JL}
Let $\bG$ be $GL_n$ over $F$ and let $\bG'$ be an $F$-inner form of $\bG.$ We denote by $\esq(G)$ and $\esq(G')$ the sets of essentially square-integrable representations in $\Irr(G)$ and $\Irr(G'),$ respectively. We say a semisimple element $g \in G$ is \textit{regular} if its characteristic polynomial has distinct roots in  $\bar{F}$ .
We write $G^{\reg}$ for the set of regular semisimple elements in $G.$ We denote by $C^{\infty}_{c}(G)$ the Hecke algebra of locally constant functions on $G$ with compact support. Fix a Haar measure $dg$ on $G.$ For any $\rho \in \Irr(G),$ there is a unique locally constant function $\Theta_{\rho}$ on $G^{\reg}$ which is invariant under conjugation by $G$ such that
\[
\operatorname{tr} \rho (f) = \int_{G^{\reg}} \Theta_{\rho}(g)f(g)dg,
\]
for all $f \in C^{\infty}_{c}(G).$ 
Here 
\[
\rho(f)\cdot v=\int_{G}f(x)\rho(x)v\,dx.
\]
In positive characteristic, it is required that the support of the function $f \in C^{\infty}_{c}(G)$ is contained in $G^{\reg}.$
We refer the reader to \cite[p.96]{hc81} and \cite[b. p.33]{dkv} for details. The same is true for $G'.$ 

We say $g\in G^{reg}$ and $g'\in G'^{reg}$ correspond and denote this by $g\leftrightarrow g'$ if their characteristic polynomials are equal. We state the local Jacquet-Langlands correspondence (\cite[B.2.a]{dkv}, \cite[Theorem 5.8]{rog83}, and \cite[Theorem 2.2]{ba08}) as follows.
\begin{pro}  \label{proposition of local JL for essential s i}
There is a unique bijection $\mathbf{C} : \esq(G) \longrightarrow \esq(G')$ such that: for all $\sigma \in \esq(G),$ we have
\[
\Theta_{\sigma}(g) = (-1)^{n-m} \Theta_{\mathbf{C}(\sigma)}(g') 
\]
whenever $g\leftrightarrow g'.$ 
\end{pro}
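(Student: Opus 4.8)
The plan is to reconstruct the proof of Deligne--Kazhdan--Vign\'eras through a global comparison of trace formulas. First I would globalize the data: choose a number field $k$ with a finite place $v_0$ such that $k_{v_0}\simeq F$, and a central division algebra $\mathcal D$ over $k$ of degree $d^2$ that is split at every archimedean place and whose completion at $v_0$ is $D$ — possible since an element of $Br(k)$ is prescribed freely by its local invariants, subject only to the sum-zero condition. Put $\mathbf G=GL_n$ over $k$ and $\mathbf G'=GL_m(\mathcal D)$, a $k$-inner form of $\mathbf G$ which is split outside a finite set of finite places. Because $\mathbf G$ and $\mathbf G'$ have the same adjoint group, no endoscopy intervenes and the stable and ordinary (invariant) trace formulas agree, so it suffices to match the two invariant trace formulas.

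The key local input is the transfer of orbital integrals at $v_0$. Since $\mathbf G'$ is an inner form of $GL_n$, for regular semisimple $\gamma\in G$ and $\gamma'\in G'$ with $\gamma\leftrightarrow\gamma'$ the connected centralizers $G_\gamma$ and $G'_{\gamma'}$ are canonically isomorphic $F$-tori — each is the unit group of the \'etale algebra $F[X]/(P)$, with $P$ the common characteristic polynomial. After fixing compatible Haar measures, I would show that for $f\in C^\infty_c(G)$ there is $f'\in C^\infty_c(G')$ with $O_{\gamma'}(f')=O_\gamma(f)$ for all corresponding pairs and $O_{\gamma'}(f')=0$ for regular semisimple $\gamma'$ matching no $\gamma$; on the division-algebra side these orbital integrals are elementary, since centralizers of regular elements are tori. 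Combining this matching at $v_0$ with the identity transfer $f_v\mapsto f_v'$ under $G_v\simeq G_v'$ at the (almost all) split places pairs the geometric sides of the two trace formulas.

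Equating geometric sides forces equality of spectral sides. Inserting pseudo-coefficients of discrete series (Clozel--Delorme, Kazhdan) at the auxiliary places isolates a single automorphic representation at $v_0$ on each side and yields a local identity relating $\Theta_\sigma$ on $G^{\reg}$ to $\Theta_{\mathbf C(\sigma)}$ on ${G'}^{\reg}$ for $g\leftrightarrow g'$; the proportionality constant must be $\pm1$ because at elliptic regular elements Weyl integration normalizes it, and tracking the Weyl-discriminant factors — equivalently the Kottwitz signs $d(\mathbf G),d(\mathbf G')$ — produces the sign $(-1)^{n-m}$. Uniqueness and bijectivity of $\mathbf C$ are then formal: the characters $\{\Theta_\rho\}_{\rho\in\Irr(G')}$ are linearly independent on ${G'}^{\reg}$ and the corresponding regular semisimple classes are dense, so $\mathbf C(\sigma)$ is determined by the identity, and running the same argument from $G'$ to $G$ inverts it.

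The main obstacle is the spectral separation step: one must choose the globalization and the auxiliary test functions so that exactly one discrete automorphic representation survives on each side, and so that, as $\sigma$ ranges over all of $\esq(G)$, every such $\sigma$ is reached — this needs enough flexibility in the global lift together with careful control of multiplicities, and is where \cite{dkv} concentrates its effort. The alternative routes of \cite{rog83} (a Hecke-algebra argument reducing to supercuspidals) and \cite{ba08} (extending the statement to positive characteristic) streamline or sidestep parts of this.
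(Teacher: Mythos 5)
This proposition is not proved in the paper at all: it is quoted verbatim as known input, with the proof delegated to \cite[B.2.a]{dkv}, \cite[Theorem 5.8]{rog83} and \cite[Theorem 2.2]{ba08}. Your outline is therefore not an alternative to anything in the paper; it is a high-level reconstruction of the Deligne--Kazhdan--Vign\'eras argument that the paper cites. As a road map it is faithful: globalize $D$ via the local-global description of the Brauer group, match orbital integrals at $v_0$ and use the identity transfer at split places, compare the two trace formulas, isolate representations spectrally, and read off the character identity with its sign. You also correctly identify where the real work lies.

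As a proof, however, it has genuine gaps beyond the one you flag. First, the existence of the local transfer $f\mapsto f'$ of orbital integrals at $v_0$ is asserted, not proved; even though centralizers of regular semisimple elements in $G$ and $G'$ are the same induced tori (which is the actual reason stable conjugacy equals conjugacy here --- ``same adjoint group'' is not by itself the right justification, since that is true for any inner pair), one still has to verify the matching function exists with the correct vanishing on non-transferring classes, and this is a nontrivial descent/germ argument in \cite{dkv}. Second, the simple trace formula used there requires test functions with (at least two) elliptic or cuspidal components to kill the continuous and residual spectra; without specifying this, ``equating geometric sides forces equality of spectral sides'' does not isolate anything. Third, the surjectivity of $\mathbf{C}$ onto all of $\esq(G')$ and the fact that every $\sigma\in\esq(G)$ (not just supercuspidals) is reached require either the classification of essentially square-integrable representations of $GL_n$ in terms of generalized Steinberg representations, or the orthogonality relations for elliptic characters; linear independence of characters gives uniqueness and injectivity but not surjectivity. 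Finally, the sign $(-1)^{n-m}$ must be pinned down by an actual computation (e.g.\ at the Steinberg representation or via the Kottwitz sign), not merely ``tracked.'' None of this is wrong in spirit, but the proposal is a summary of \cite{dkv} rather than a self-contained proof, which is consistent with the paper's own decision simply to cite the result.
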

\begin{rem} \label{transfer characters via JL}
For any $\sigma \in \esq(G)$ and character $\eta$ of $F^{\times},$ we have $\mathbf{C}(\sigma \otimes (\eta \circ \det)) = \mathbf{C}(\sigma) \otimes (\eta \circ \Nrd)$ due to \cite[Introduction d.4)]{dkv}. Here $\Nrd$ is the reduced norm on $G'.$ It is known that any character on $G$ (respectively, $G'$) is of the form $\eta \circ \det$ (respectively, $\eta \circ \Nrd$) for some character $\eta$ on $F^{\times}.$ 
The local Jacquet-Langlands correspondence can be generalized to the case that $G$ is a product of a general linear groups in an obvious way. 
\end{rem}
\subsection{Restriction of representations} \label{section for Tadic results}
We recall the results of Tadi{\'c} in \cite{tad92}. 
Throughout Section \ref{section for Tadic results}, $\bG$ and $\tbG$ denote connected reductive groups over $F,$ such that
\begin{equation} \label{cond on G}
\bG_{\der} = \tbG_{\der} \subseteq \bG \subseteq \tbG,
\end{equation}
where $\bG_{\der}$ and $\tbG_{\der}$ denote the derived groups of $\bG$ and $\tbG,$ respectively.
\begin{pro} (\cite[Lemma 2.1 and Proposition 2.2]{tad92}) \label{prop 2.2 tadic}
For any $\sigma \in \Irr(G),$ there exists $\tsigma \in \Irr(\tG)$ such that $\sigma \hookrightarrow \widetilde {\sigma} |_{G},$ that is, $\sigma$ is isomorphic to an irreducible constituent of the restriction $\widetilde {\sigma} |_{G}$ of $\tsigma$ to $G.$
\end{pro}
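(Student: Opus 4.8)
\emph{Overview.} The plan is to argue by Clifford theory, after reducing to a finite-index subgroup of $\tG$ on which $\sigma$ visibly extends. Since $\bG_{\der}=\tbG_{\der}$, the quotient $\tbG/\bG$ is a torus, so $\bG$ is normal in $\tbG$, the quotient $\tG/G$ is abelian, and $\tG/G$ sits inside the $F$-points of this torus with image of finite index (cf. Remark~\ref{rem for isom between two quotients}). The center $Z:=Z(\tG)$ acts trivially on $G$ by conjugation, hence $Z\subseteq\{w:{}^w\sigma\simeq\sigma\}$; the crucial point --- and essentially the only non-formal one --- is that $[\tG:GZ]$ is finite. This holds because $Z(\tbG)^{\circ}$ maps onto $\tbG/\tbG_{\der}$ with finite kernel, so $Z(\tbG)$ maps onto $\tbG/\bG$ as algebraic groups, and passing to $F$-points costs only the Galois cohomology $H^1(F,-)$ of a group of multiplicative type, which is finite over the $p$-adic field $F$; in the case $SL_m(D)\subseteq GL_m(D)$ it is simply the finiteness of $F^{\times}/(F^{\times})^{md}$. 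Set $H:=GZ$, a normal, open, finite-index subgroup of $\tG$ containing $G$.

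\emph{Extension to $H$.} By Schur's lemma, $Z(G)=G\cap Z$ acts on $\sigma$ through a character $\omega_{\sigma}$. Since $\CC^{\times}$ is divisible (equivalently, by Pontryagin duality applied to the closed inclusion $Z(G)\hookrightarrow Z$), $\omega_{\sigma}$ extends to a character $\tilde\omega$ of $Z$. As $Z$ is central and $\tilde\omega|_{Z(G)}=\omega_{\sigma}$, the rule $\sigma_1(gz):=\sigma(g)\,\tilde\omega(z)$ is well-defined and yields $\sigma_1\in\Irr(H)$ with $\sigma_1|_G\simeq\sigma$. Note that no $2$-cocycle obstruction intervenes here, precisely because we extend only across the central subgroup $Z$ and not across the full stabilizer of $\sigma$ in $\tG$; in particular no multiplicity-one hypothesis on the restriction is needed.

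\emph{Induction and conclusion.} Let $\tsigma$ be any irreducible subrepresentation of $\operatorname{Ind}_H^{\tG}\sigma_1$. Since $[\tG:H]<\infty$, this induced representation is admissible and of finite length (its restriction to $G$ being a finite direct sum of irreducibles), so such a $\tsigma$ exists and $\tsigma\in\Irr(\tG)$. By Mackey's formula, using $G\trianglelefteq\tG$ and $G\subseteq H$, one has $(\operatorname{Ind}_H^{\tG}\sigma_1)|_G\simeq\bigoplus_{g\in\tG/H}{}^{g}\sigma$ (sum over coset representatives), a semisimple $G$-module whose isotypic components are permuted by $\tG$ transitively --- this permutation being the conjugation action of $\tG$ on the single orbit $\{{}^{g}\sigma\}$. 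Hence the set of isotypic components occurring in $\tsigma|_G$ is $\tG$-stable, so it is empty or everything; being nonzero, it is everything, and in particular $\sigma\hookrightarrow\tsigma|_G$. The main obstacle is the finiteness $[\tG:GZ]<\infty$ in the first step, i.e.\ the existence of \emph{any} finite-index intermediate subgroup on which $\sigma$ extends; this genuinely uses the $p$-adic hypothesis (it fails over $\RR$, where the statement itself changes). Everything after that is routine Clifford-theoretic bookkeeping, the only care being to check that the extracted irreducible subrepresentation contains $\sigma$ itself rather than merely a $\tG$-conjugate of it.
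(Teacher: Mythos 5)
Your argument is correct, but note that the paper does not prove this statement at all: it is quoted verbatim from Tadi\'c (\cite[Lemma 2.1 and Proposition 2.2]{tad92}), so there is no in-paper proof to compare against. What you have written is, in substance, the standard proof from that reference (going back to Gelbart--Knapp \cite{gk82} and Silberger): extend the central character from $G\cap Z(\tG)$ to $Z(\tG)$, observe that $GZ(\tG)$ has finite index in $\tG$ because $Z(\tbG)^{\circ}$ surjects onto $\tbG/\bG$ with kernel of multiplicative type and $H^1$ of such a kernel is finite over a $p$-adic field, and then take an irreducible subrepresentation of $\operatorname{Ind}_{GZ(\tG)}^{\tG}$ of the extension, using Mackey theory and the $\tG$-stability of the set of constituents to see that $\sigma$ itself (not just a conjugate) occurs. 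All the steps you flag as delicate are handled correctly: the absence of a cocycle obstruction when extending only over the center, the admissibility and finite length of the induced representation, and the final transitivity argument. The only cosmetic quibble is the identification $Z(G)=G\cap Z(\tG)$, which need not be an equality in general, but your argument only uses the central character of $\sigma$ on $G\cap Z(\tG)$, so nothing is affected.
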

\noindent Given $\tsigma \in \Irr(G)$ as in Proposition \ref{prop 2.2 tadic}, we denote by $\Pi_{\tsigma}(G)$ the set of equivalence classes of all irreducible constituents of $\tsigma|_{G}.$ 
\begin{rem} (\cite[Proposition 2.7]{tad92}) \label{remark for sc to sc}
Any member in $\Pi_{\tsigma}(G)$ is supercuspidal, essentially square-integrable, discrete series or tempered if and only if $\widetilde{\sigma}$ is.
\end{rem}
\begin{pro} (\cite[Corollary 2.5]{tad92}) \label{pro for lifting}
Let $\tsigma_1$ and $\tsigma_2 \in \Irr(\tG)$ be given. Then the following statements are equivalent:
\begin{enumerate}[(i)]
  \item  There exists a character $\eta \in (\tG / G)^D$ such that $\tsigma_1 \s \tsigma_2 \otimes \eta;$
  \item  $\Pi_{\tsigma_1}(G) \cap \Pi_{\tsigma_2}(G) \neq \emptyset;$
  \item  $\Pi_{\tsigma_1}(G) = \Pi_{\tsigma_2}(G).$
\end{enumerate}
\end{pro}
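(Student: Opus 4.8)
The plan is to prove Proposition~\ref{pro for lifting} (\cite[Corollary 2.5]{tad92}) by establishing the cycle of implications $(i) \Rightarrow (iii) \Rightarrow (ii) \Rightarrow (i)$, with the first implication carrying the main content. Throughout I use the basic structure theory of the restriction $\tsigma|_G$: since $\bG_{\der} = \tbG_{\der} \subseteq \bG \subseteq \tbG$, the quotient $\tG/G$ is abelian and acts on $\Irr(G)$ by twisting, with $\tG$ acting transitively on the set $\Pi_{\tsigma}(G)$ of irreducible constituents of $\tsigma|_G$ (each constituent appearing with the same finite multiplicity). This transitivity is the engine behind everything.

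First I would prove $(i) \Rightarrow (iii)$. Suppose $\tsigma_1 \simeq \tsigma_2 \otimes \eta$ for some $\eta \in (\tG/G)^D$. Restricting to $G$, and noting that $\eta$ is trivial on $G$, we get $\tsigma_1|_G \simeq (\tsigma_2 \otimes \eta)|_G \simeq \tsigma_2|_G$ as representations of $G$. Hence the two restrictions have exactly the same irreducible constituents, i.e.\ $\Pi_{\tsigma_1}(G) = \Pi_{\tsigma_2}(G)$. This direction is essentially immediate. The implication $(iii) \Rightarrow (ii)$ is trivial provided $\Pi_{\tsigma_i}(G) \neq \emptyset$, which holds by Proposition~\ref{prop 2.2 tadic} (every $\tsigma_i|_G$ has at least one irreducible constituent).

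The substantive step is $(ii) \Rightarrow (i)$. Assume there is some $\sigma \in \Pi_{\tsigma_1}(G) \cap \Pi_{\tsigma_2}(G)$, so $\sigma \hookrightarrow \tsigma_1|_G$ and $\sigma \hookrightarrow \tsigma_2|_G$. By Frobenius reciprocity (for the compact-modulo-center induction, or by Clifford-theoretic considerations for the finite abelian group $\tG/G$ acting on the isotypic decomposition), $\sigma \hookrightarrow \tsigma_i|_G$ is equivalent to $\tsigma_i \hookrightarrow \mathrm{Ind}_G^{\tG}\sigma$ for $i = 1,2$. Now $\mathrm{Ind}_G^{\tG}\sigma$ decomposes as a direct sum of irreducible representations of $\tG$, all of which are twists of one another by characters in $(\tG/G)^D$ (this is the dual statement to $\tG$ acting transitively on $\Pi_{\tsigma}(G)$: the characters $\eta \in (\tG/G)^D$ with $\sigma \otimes \eta \simeq \sigma$ form the stabilizer, and $\mathrm{Ind}_G^{\tG}\sigma$ is multiplicity-free modulo this stabilizer, with constituents permuted simply transitively by the quotient). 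Since both $\tsigma_1$ and $\tsigma_2$ appear in this same induced representation, there exists $\eta \in (\tG/G)^D$ with $\tsigma_1 \simeq \tsigma_2 \otimes \eta$, which is $(i)$.

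The main obstacle is making the Clifford theory for $\tG/G$ fully rigorous in the $p$-adic setting: one must verify that $\tG/G$ is finite (or at least that the relevant central character considerations reduce matters to a finite quotient), that $\mathrm{Ind}_G^{\tG}\sigma$ has finite length, and that its constituents are exactly the $\tG/G$-orbit of twists of a single irreducible, each with multiplicity one. For the groups in question — where $\tG/G \simeq F^\times$ modulo the image of determinants, a finite group — this is precisely the content worked out in \cite[\S2]{tad92}, and I would cite Lemma~2.1, Proposition~2.2, and Corollary~2.5 there rather than reprove the Clifford theory from scratch; the logical skeleton above is exactly how those results fit together.
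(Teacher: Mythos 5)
The paper gives no proof of this proposition---it is imported verbatim as \cite[Corollary 2.5]{tad92}---and your skeleton (the immediate implications $(i)\Rightarrow(iii)\Rightarrow(ii)$, plus Clifford theory for $\tG/G$ via Frobenius reciprocity for $(ii)\Rightarrow(i)$, deferring to Tadi\'c's \S 2 for the rigorous details) is precisely the argument that reference carries out, so your proposal matches the paper's treatment. One small correction: $\tG/G\simeq F^{\times}$ is \emph{not} finite; the finiteness that makes the Clifford theory work is that of $\tG/\tG_{\sigma}$ (equivalently of $\tG/(Z(\tG)\cdot G)$), which is exactly the reduction your parenthetical ``or at least\dots reduce matters to a finite quotient'' correctly anticipates.
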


Let $\sigma \in \Pi_{\disc}(G)$ be given. Choose $\tsigma \in \Irr(G)$ as in Proposition \ref{prop 2.2 tadic}. For any $\tilde{g} \in \tG,$ we define an action $\tilde{x}\mapsto {^{\tilde{g}}{\tsigma}}(\tilde{x}):=\tsigma(\tilde{g}^{-1} \tilde{x} \tilde{g})$ on the space of $\tsigma.$ Since $G$ is a normal subgroup of $\tG,$ the restriction of the action of $\tG$ on $\tsigma$ to $G$ induces the action $x \mapsto {^{\tilde{g}}{\sigma}}(x):=\sigma(\tilde{g}^{-1} x \tilde{g})$ on the space of $\sigma.$ 
It is clear that $^{\tilde{g}}{\tsigma} \s \tsigma$ as representations of $\tG.$ Hence, it turns out that $\tG$ acts on the set $\Pi_{\tsigma}(G)$ by conjugation. 
\begin{lm}  \label{lemma about transitive action}
The group $\tG$ acts transitively on the set $\Pi_{\tsigma}(G).$ 
\end{lm}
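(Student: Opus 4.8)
The plan is to exploit the Clifford-theoretic structure of the restriction $\tsigma|_G$ together with the fact that $\tG/G$ is abelian (here $\tG/G \simeq F^\times$, but really only commutativity and the fact that $\tG = G \cdot Z(\tG)^\circ$-type considerations matter). First I would recall the setup of Proposition~\ref{prop 2.2 tadic}: fix $\sigma \in \Pi_{\disc}(G)$, fix $\tsigma \in \Irr(\tG)$ with $\sigma \hookrightarrow \tsigma|_G$, and decompose $\tsigma|_G = \bigoplus_{i=1}^r m_i\,\sigma_i$ into its isotypic components, where the $\sigma_i$ range over $\Pi_{\tsigma}(G)$. Because $G \trianglelefteq \tG$, conjugation by any $\tilde g \in \tG$ permutes the isotypic components: ${}^{\tilde g}\tsigma \simeq \tsigma$ as a $\tG$-representation (it is literally $\tsigma$ up to an inner twist), so restricting this isomorphism to $G$ shows ${}^{\tilde g}(\tsigma|_G) \simeq \tsigma|_G$, hence $\{{}^{\tilde g}\sigma_i\} = \{\sigma_j\}$ as sets. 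Thus $\tG$ acts on $\Pi_{\tsigma}(G)$, which is the action already defined in the excerpt.

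The heart of the argument is that this action is transitive. I would argue as follows. Let $V$ be the space of $\tsigma$ and let $V_i \subseteq V$ be the $\sigma_i$-isotypic subspace, so $V = \bigoplus_i V_i$ as a $G$-module. Conjugation by $\tilde g$ sends $V_i$ to the $({}^{\tilde g}\sigma_i)$-isotypic subspace, i.e. it permutes the $V_i$ according to the permutation action on $\Pi_{\tsigma}(G)$. Now suppose the orbit of $\sigma_1$ under $\tG$ is a proper subset $\mathcal O \subsetneq \Pi_{\tsigma}(G)$. Then $W := \bigoplus_{\sigma_i \in \mathcal O} V_i$ is a nonzero proper subspace of $V$ which is stable under $G$ (each $V_i$ is) and stable under $\tG$ (the $\tG$-action permutes the $V_i$ within $\mathcal O$, since an orbit is by definition $\tG$-stable). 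Hence $W$ is a nonzero proper $\tG$-subrepresentation of $\tsigma$, contradicting the irreducibility of $\tsigma$. Therefore $\mathcal O = \Pi_{\tsigma}(G)$, i.e. $\tG$ acts transitively.

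The one point requiring care --- and the place I expect the only real subtlety --- is verifying that the $\tG$-conjugation action genuinely permutes the \emph{isotypic} subspaces $V_i$ (not merely the isomorphism classes abstractly), so that $W$ as defined above is honestly $\tG$-stable. This comes down to the standard fact that for $g \in G$ the operator $\tsigma(g)$ preserves each $V_i$, while for general $\tilde g \in \tG$ one has $\tsigma(\tilde g)\,\tsigma(x)\,\tsigma(\tilde g)^{-1} = \tsigma(\tilde g x \tilde g^{-1}) = {}^{\tilde g^{-1}}\!\tsigma(x)$, so $\tsigma(\tilde g)$ intertwines the $G$-module $V$ with the $G$-module $V$ twisted by conjugation, forcing $\tsigma(\tilde g) V_i$ to be the isotypic component for ${}^{\tilde g}\sigma_i$. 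Once this is spelled out the transitivity is immediate from irreducibility of $\tsigma$, with no need for multiplicity-one (which, as the introduction emphasizes, fails for $GL_m(D)$); indeed this is exactly why a Clifford-theory argument rather than a multiplicity argument is the right tool here. I would also note in passing that transitivity, combined with Proposition~\ref{pro for lifting}, recovers the well-definedness of $\Pi_{\tsigma}(G)$ independently of the choice of $\tsigma$.
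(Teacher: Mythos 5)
Your proof is correct and is essentially the paper's own argument: both rest on the Clifford-theoretic observation that $\tsigma(\tilde g)$ carries the $\sigma_i$-isotypic (or irreducible) $G$-subspaces of $\tsigma$ to those of ${}^{\tilde g}\sigma_i$, and then invoke irreducibility of $\tsigma$ to conclude the orbit is everything. The paper phrases this directly (producing $\tilde g$ with $\tsigma(\tilde g)V_1 = V_2$ for two given constituents) rather than via your contradiction with the orbit-sum subspace, but the underlying mechanism is identical.
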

\begin{proof}
Let $\sigma_1$ and $\sigma_2$ be given in $\Pi_{\tsigma}(G).$ Denote by $V_i$ the space of $\sigma_i$ for $i=1, 2.$ Since $\tsigma$ is irreducible, there exists $\tilde{g} \in \tG$ such that $\tsigma(\tilde{g})V_1 = V_2.$ So, we have $\tsigma(\tilde{g}) \sigma_1 \tsigma(\tilde{g}^{-1}) \s \sigma_2.$ Hence, we have $\tilde{g} \in \tG$ such that $^{\tilde{g}}{\sigma}_1 \s \sigma_2.$
\end{proof}
\noindent We define the stabilizer of $\sigma$ in $\tG$  
\[
\tG_{\sigma}:= \{ \tilde{g} \in \tG : {^{\tilde{g}}{\sigma}} \s \sigma
\}.
\]
It is known \cite[Corollary 2.3]{tad92} that $\tG_{\sigma}$ is an open normal subgroup of $\tG$ of finite index and satisfies
\[
Z({\tG}) \cdot G \subseteq \tG_{\sigma} \subseteq \tG.
\]
Hence, we have proved the following proposition (cf. \cite[Lemma 2.1]{gk82}).
\begin{pro}  \label{simply transitive on the set}
Let $\sigma \in \Pi_{\disc}(G)$ be given. Choose $\tsigma \in \Irr(G)$ as in Proposition \ref{prop 2.2 tadic}. The quotient of $\tG/\tG_{\sigma}$ acts by conjugation on the set $\Pi_{\tsigma}(G)$ simply and  transitively. In fact, there is a bijection between $\tG/\tG_{\sigma}$ and $\Pi_{\tsigma}(G).$
\end{pro}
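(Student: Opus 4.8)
The plan is to deduce Proposition \ref{simply transitive on the set} directly from the preceding facts, so the argument is short and organizational rather than computational. First I would recall that, by the discussion preceding the statement, $\tG$ acts on $\Pi_{\tsigma}(G)$ by conjugation (using that $^{\tilde g}\tsigma\simeq\tsigma$ so that $^{\tilde g}\sigma$ is again a constituent of $\tsigma|_G$), and by Lemma \ref{lemma about transitive action} this action is transitive. The stabilizer of a chosen $\sigma\in\Pi_{\tsigma}(G)$ is by definition $\tG_\sigma$, so the orbit–stabilizer principle for the $\tG$-set $\Pi_{\tsigma}(G)$ gives a $\tG$-equivariant bijection $\tG/\tG_\sigma\xrightarrow{\sim}\Pi_{\tsigma}(G)$, $\tilde g\tG_\sigma\mapsto{}^{\tilde g}\sigma$. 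The only point that needs care is that $\tG_\sigma$ is normal in $\tG$: this is exactly the cited result \cite[Corollary 2.3]{tad92} (together with $Z(\tG)\cdot G\subseteq\tG_\sigma$ and finite index). Once $\tG_\sigma$ is normal, the quotient $\tG/\tG_\sigma$ is a group, and the residual $\tG/\tG_\sigma$-action on $\Pi_{\tsigma}(G)$ obtained from the $\tG$-action is well-defined and is precisely the one making the above bijection equivariant; transitivity is inherited, and the action is simple because the stabilizer of $\sigma$ in $\tG/\tG_\sigma$ is the trivial coset. Since all elements of $\Pi_{\tsigma}(G)$ are conjugate, the stabilizer of any other element is also a $\tG_\sigma$-coset, so simplicity holds at every point.

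Concretely, the steps in order: (1) note $\Pi_{\tsigma}(G)$ is nonempty and finite — nonempty by Proposition \ref{prop 2.2 tadic}, finite because $\tsigma|_G$ is a finite direct sum (admissibility), which is implicit in Tadi\'c's results; (2) verify the $\tG$-action is well-defined on $\Pi_{\tsigma}(G)$, i.e.\ sends constituents of $\tsigma|_G$ to constituents of $\tsigma|_G$, using $^{\tilde g}\tsigma\simeq\tsigma$; (3) invoke Lemma \ref{lemma about transitive action} for transitivity; (4) fix $\sigma$, identify its $\tG$-stabilizer with $\tG_\sigma$ by definition, and apply \cite[Corollary 2.3]{tad92} to know $\tG_\sigma\trianglelefteq\tG$ with $Z(\tG)\cdot G\subseteq\tG_\sigma$ of finite index; (5) conclude the orbit map $\tilde g\mapsto{}^{\tilde g}\sigma$ descends to a bijection $\tG/\tG_\sigma\to\Pi_{\tsigma}(G)$, which is $\tG/\tG_\sigma$-equivariant, hence the induced action is simply transitive.

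I do not expect a genuine obstacle here: the statement is essentially a packaging of Lemma \ref{lemma about transitive action} and the cited normality/finiteness of $\tG_\sigma$ via orbit–stabilizer. The one place to be slightly careful in the writeup is making sure the $\tG$-action descends to an action of the \emph{quotient} $\tG/\tG_\sigma$ and not merely that $\tG_\sigma$ is the stabilizer of one distinguished point — but this follows immediately once normality is in hand, since then for every $\sigma''\in\Pi_{\tsigma}(G)$ the stabilizer $\tG_{\sigma''}$ equals $\tG_\sigma$ (conjugate stabilizers of a normal-subgroup-style action, or directly: if $\sigma''={}^{\tilde h}\sigma$ then $\tG_{\sigma''}=\tilde h\,\tG_\sigma\,\tilde h^{-1}=\tG_\sigma$). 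This yields a genuine simply transitive action of the group $\tG/\tG_\sigma$ on $\Pi_{\tsigma}(G)$, which is the assertion. The sentence ``Hence, we have proved the following proposition'' in the excerpt already signals that the intended proof is exactly this short deduction, so I would keep the exposition to a few lines.
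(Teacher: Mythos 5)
Your proposal is correct and matches the paper's own argument: the authors likewise treat the proposition as an immediate consequence of Lemma \ref{lemma about transitive action} together with Tadi\'c's result that $\tG_{\sigma}$ is a normal subgroup of finite index containing $Z(\tG)\cdot G$, concluding via the orbit--stabilizer correspondence (the sentence ``Hence, we have proved the following proposition'' is their entire proof). Your extra care about the action descending to the quotient and the stabilizers of all points coinciding is a sound, if slightly more explicit, rendering of the same deduction.
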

\noindent Note that the index $[\tG :  (Z({\tG}) \cdot G)]$ is also finite since  $\tbG$ and $\bG$ share the same derived group by the condition \eqref{cond on G} (cf. Section \ref{size of L-packet}).
\begin{rem} \label{remark for lifting}
Due to due to Propositions \ref{pro for lifting} and \ref{simply transitive on the set}, the set $\Pi_{\tsigma}(G)$ is finite and independent of the choice of $\tsigma.$
\end{rem}
\section{Elliptic tempered $L$-packets for Levi subgroups} \label{L-packets}
In this section we construct elliptic tempered $L$-packets for $F$-Levi subgroups of $SL_n$ and its $F$-inner form. Our main tools are the local Langlands correspondence for $GL_n$ in \cite{ht01, he00}, the local Jacquet-Langlands correspondence in Section \ref{local JL} and the result of Labesse in \cite{la85}. Further, we discuss the size of our $L$-packets in terms of Galois cohomology (Proposition \ref{size of L-packets}).

Throughout Section \ref{L-packets}, we continue with the notation in Section \ref{pre}. Let $\bG$ and $\bG'$ be $SL_n$ and its $F$-inner form. Let $\bM$ and $\bM'$ be $F$-Levi subgroups of $\bG$ and $\bG'$ as in \eqref{cond on M} and \eqref{cond on M' in body}, respectively. We identify $Z(\bM)$ and $Z({\bM'}).$ Note that $\bM$ is split over $F,$ so we take $^{L}M = \widehat{M} .$ Similarly, we take $^{L}\tM = \widehat{\tM} = \prod_{i=1}^{k} GL_{n_i}(\CC)$ (cf. \eqref{hat M}). Note that $Z({\widehat{M}})^{\Gamma_F} = Z({\widehat{M}})$ and $Z({\widehat{\tM}})^{\Gamma_F} = Z({\widehat{\tM}}).$ Here, the superscript ${\Gamma_F}$ means the group of ${\Gamma_F}$-invariants. We identify $\widehat{M}$ and $\widehat{M'}.$
\subsection{Construction of $L$-packets} \label{Construction of $L$-packets}
Let $\phi : W_F \times SL_2(\CC) \rightarrow \widehat{M}$ be an $L$-parameter.
We denote by $C_{\phi}(\widehat{M})$ the centralizer of the image of $\phi$ in $\widehat{M}.$ We note that $Z({\widehat{M}}) \subseteq C_{\phi}(\widehat{M}).$ We denote by $S_{\phi}(\widehat{M})$ the group $\pi_0(C_{\phi}(\widehat{M})) := C_{\phi}(\widehat{M}) / C_{\phi}(\widehat{M})^{\circ}$ of connected components.

\begin{defn}
We say that $\phi : W_F \times SL_2(\CC) \rightarrow \widehat{M}$ is \textit{elliptic} if the quotient group $C_{\phi}(\widehat{M}) / Z({\widehat{M}})$ is finite; and $\phi$ is \textit{tempered} if $\phi(W_F)$ is bounded. 
\end{defn}

\begin{rem}
The parameter $\phi$ is elliptic if and only if the image of $\phi$ in $\widehat{M}$ is not contained in any proper Levi subgroup of $\widehat{M}.$ Moreover, this is equivalent to requiring that the connected component $C_{\phi}(\widehat{M})^{\circ}$ is contained in $Z({\widehat{M}})$ (cf. \cite[(10.3)]{kot84}).
\end{rem}
The rest of Section \ref{Construction of $L$-packets} is devoted to a construction of $L$-packets on $M$ and $M'$ associated to an elliptic tempered $L$-parameter. Let $\phi : W_F \times SL_2(\CC) \rightarrow \widehat{M}$ be an elliptic tempered $L$-parameter. Recall the exact sequence \eqref{hat M}
\[
\begin{CD}
1 @>>> \widehat{(\tM/M)} \s \CC^{\times} @>>> \widehat{\tM} @>{pr}>> \widehat{M} @>>> 1.
\end{CD}
\]
By \cite[Th\'{e}or\`{e}m 8.1]{la85}, we have an (elliptic tempered) $L$-parameter
\[
\widetilde{\phi} : W_F \times SL_2(\CC) \rightarrow \widehat{\widetilde{M}}
\]
of $\tM$ such that ${pr} \circ \widetilde{\phi} = \phi$ (see \cite{weil74} and \cite{he80} for the case $M=SL_n$ and $\tM=GL_n$). 
\begin{rem} \label{choice of tilde phi}
Such a parameter $\widetilde{\phi}$ is determined up to a $1$-cocycle of $W_F$ in $\widehat{(\tM/M)}$ (see \cite[Section 7]{la85} where $\widehat{(\tM/M)}$ is a central torus $^L{S^{\circ}}$). 
\end{rem}
By the local Langlands correspondence for $GL_n$ \cite{ht01, he00}, we have a unique discrete series representation $\widetilde{\sigma} \in \Pi_{\disc}(\widetilde{M})$ associated to the $L$-parameter $\widetilde{\phi}.$ We define an $L$-packet 
\[
 \Pi_{\phi}(M) := \{\tau : \tau \hookrightarrow  \widetilde{\sigma} |_{M}  \}.
 \]
Note that, if $\widetilde{\phi}_1$ is another lift of $\phi,$ then we have $\widetilde{\phi}_1 \s \widetilde{\phi} \otimes \chi,$ for some 1-cocycle $\chi$ of $W_F$ in $\widehat{(\tM/M)},$ due to Remark \ref{choice of tilde phi}.
Hence, by the local Langlands correspondence for $GL_n$ again, $\widetilde{\phi}_1$ gives another discrete series representation $\widetilde{\sigma} \otimes \chi \in \Pi_{\disc}(\widetilde{M}).$ Here $\chi$ denotes the character on $\tM/M$ associated to the 1-cocycle $\chi$ (by abuse of notation). It is then clear $(\widetilde{\sigma} \otimes \chi)|_{M}=\widetilde{\sigma}|_{M}.$  Hence, the representation $\tsigma \otimes \chi$ associated to $\widetilde{\phi}_1$ via the local Langlands correspondence gives the same $L$-packet $\Pi_{\phi}(M)$ on $M.$

On the other hand, given $\sigma \in \Pi_{\disc}(M),$ there exists a lift $\tsigma \in \Pi_{\disc}(\tM)$ such that $\sigma \hookrightarrow \tsigma|_{M}$ (see Section \ref{section for Tadic results}). By the local Langlands correspondence for $GL_n$ \cite{ht01, he00}, we have a unique elliptic tempered $L$-parameter $\widetilde{\phi} : W_F \times SL_2(\CC) \rightarrow \widehat{\widetilde{M}}$ corresponding to $\tsigma.$ Hence, we obtain an elliptic tempered $L$-parameter $\phi : W_F \times SL_2(\CC) \rightarrow \widehat{M}$ by composing with the projection $\widehat{\widetilde{M}} \twoheadrightarrow \widehat{M}.$
If we choose another lift $\tsigma_1 \in \Pi_{\disc}(\tM)$ of $\sigma,$ then we have $\tsigma_1 \s \tsigma \otimes \chi$ for some character $\chi$ on $\tM$ such that $\chi$ is trivial on $M$ (see Section \ref{section for Tadic results}). So, the local Langlands correspondence for $GL_n$ yields the $L$-parameter $\widetilde{\phi} \otimes \chi$ corresponding to $\tsigma_1.$ But, considering the projection $\operatorname{pr}: \widehat{\widetilde{M}} \twoheadrightarrow \widehat{M},$ the composite $\operatorname{pr} \circ (\widetilde{\phi} \otimes \chi)$ has to be identical to $\phi$ 
since $\operatorname{pr}\circ\chi$ vanishes on $\widehat M.$ Hence we have verified that, given $\sigma \in \Pi_{\disc}(M),$ we have a unique elliptic tempered $L$-parameter $\phi$ corresponding to $\sigma.$ Therefore, the $L$-packets of the form $\Pi_{\phi}(M)$ exhaust all irreducible discrete series representations of $M.$

Consider $\phi$ and $\widetilde{\phi}$ as $L$-parameters of $M'$ and $\tM',$ respectively. The local Jacquet-Langlands correspondence gives a unique discrete series representation $\widetilde{\sigma}' \in \Pi_{\disc}(\widetilde{M}')$ which corresponds to the above $\widetilde{\sigma} \in \Pi_{\disc}(\widetilde{M}).$ We define an $L$-packet 
\[
 \Pi_{\phi}(M') := \{\tau' : \tau' \hookrightarrow  \widetilde{\sigma}' |_{M'}  \}
\]
(cf. \cite[Chapter 11]{hs11}). In the same way with the split case $M,$ the $L$-packets of the form $\Pi_{\phi}(M')$ exhaust all irreducible discrete series representations of $M'$ (cf. \cite[Section 4]{gk82} and \cite[Chapter 12]{hs11}). 

Through a natural embedding $\widehat{M} \hookrightarrow \widehat{G}=PGL_n(\CC),$ 
we define an $L$-packet $\Pi_{\phi}(G)$ of $G$ associated to the $L$-parameter $\phi : W_F \times SL_2(\CC) \rightarrow \widehat{G}$ as
\[
\Pi_{\phi}(G) := \{
\text{all irreducible constituents of}~ \ii_{GM}(\tau) : \tau \in \Pi_{\phi}(M)
\}.
\]
In a similar manner we define an $L$-packet $\Pi_{\phi}(G')$ of $G'.$ Recall that $\widehat{G}=\widehat{G'}.$ We let $C_{\phi}(\widehat{G})$ be the centralizer of the image of $\phi$ in $\widehat{G}.$ We denote by $S_{\phi}(\widehat{G})$ the group $\pi_0(C_{\phi}(\widehat{G})) := C_{\phi}(\widehat{G}) / C_{\phi}(\widehat{G})^{\circ}$ of connected components.
\subsection{Sizes of $L$-packets} \label{size of L-packet}
We recall from Section \ref{section for Tadic results} that, for any $\sigma \in \Pi_{\disc}(M),$ $\tM$ acts on the set $\Pi_{\tsigma}(M)$ by conjugation and the action is transitive. Note that $\Pi_{\tsigma}(M) = \Pi_{\phi}(M),$ where $\phi$ is the $L$-parameter associated to $\sigma.$ 
\begin{lm} \label{simply transitive}
Let $\Pi_{\phi}(M)$ be an $L$-packet associated to an elliptic tempered $L$-parameter $\phi : W_F \times SL_2(\CC) \rightarrow \widehat{M}.$ Then the quotient of $\tM/\tM_{\sigma}$ acts by conjugation on $\Pi_{\phi}(M)$ simply and  transitively. In fact, there is a bijection between $\tM/\tM_{\sigma}$ and $\Pi_{\phi}(M).$ The same is true for an $L$-packet $\Pi_{\phi}(M')$ of $M'.$
\end{lm}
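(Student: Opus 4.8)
The plan is to reduce the statement to Proposition~\ref{simply transitive on the set}, applied with $(\bG,\tbG)$ replaced by $(\bM,\tbM)$, via the identification of the $L$-packet $\Pi_{\phi}(M)$ with a set of restriction constituents $\Pi_{\tsigma}(M)$ of the kind treated in Section~\ref{section for Tadic results}. The first step is to check that the pair $(\bM,\tbM)$ satisfies the hypothesis~\eqref{cond on G}. By \eqref{cond on M} one has $\prod_{i=1}^{k} SL_{n_i}\subseteq\bM\subseteq\prod_{i=1}^{k} GL_{n_i}$; since $\left(\prod_i GL_{n_i}\right)_{\der}=\prod_i SL_{n_i}$ and each $SL_{n_i}$ is its own derived group, this forces $\bM_{\der}=\prod_i SL_{n_i}=\tbM_{\der}$, hence $\bM_{\der}=\tbM_{\der}\subseteq\bM\subseteq\tbM$ as required. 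In particular the index $[\tM:Z(\tM)\cdot M]$ is finite, as observed right after Proposition~\ref{simply transitive on the set}, so $\tM/\tM_{\sigma}$ is a finite group.

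The second step is the identification of the two descriptions of the packet. Let $\tsigma\in\Pi_{\disc}(\tM)$ be the discrete series representation of $\tM=\prod_i GL_{n_i}(F)$ attached, through the local Langlands correspondence for $GL_n$ applied in each factor, to a lift $\widetilde{\phi}$ of $\phi$; then $\Pi_{\phi}(M)=\{\tau:\tau\hookrightarrow\tsigma|_M\}$ is precisely $\Pi_{\tsigma}(M)$ in the notation of Section~\ref{section for Tadic results} (this identification, independent of the chosen lift $\widetilde{\phi}$, was recorded just before the statement). By Tadi{\'c}'s results this set is finite and nonempty, and every member is discrete series by Remark~\ref{remark for sc to sc}. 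Fixing any $\sigma\in\Pi_{\phi}(M)$, the representation $\tsigma$ is an admissible lift of $\sigma$ in the sense of Proposition~\ref{prop 2.2 tadic}, and Proposition~\ref{simply transitive on the set}, applied to $\sigma$ and $\tsigma$ with $(G,\tG)$ replaced by $(M,\tM)$, yields that $\tM/\tM_{\sigma}$ acts by conjugation on $\Pi_{\tsigma}(M)=\Pi_{\phi}(M)$ simply and transitively, and is in bijection with it. Since $\tM_{\sigma}$ is a \emph{normal} subgroup of $\tM$ while, by Lemma~\ref{lemma about transitive action}, any two members of $\Pi_{\phi}(M)$ are $\tM$-conjugate, $\tM_{\sigma}$ does not depend on the choice of $\sigma$; independence of $\tsigma$ is Remark~\ref{remark for lifting}. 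This gives the assertion for $M$.

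For $M'$ the argument runs in parallel. The representation $\tsigma'\in\Pi_{\disc}(\tM')$ appearing in the definition of $\Pi_{\phi}(M')$ is the image of $\tsigma$ under the product of the local Jacquet--Langlands correspondences of Section~\ref{local JL}, and is a discrete series representation of $\tM'=\prod_i GL_{m_i}(D)$ by Proposition~\ref{proposition of local JL for essential s i} and Remark~\ref{transfer characters via JL} (which together guarantee that twisting and central characters are preserved, so square-integrability persists). The pair $(\bM',\tbM')$ satisfies \eqref{cond on G} for the same reason as $(\bM,\tbM)$: by \eqref{cond on M' in body} one has $\prod_i SL_{m_i}(D)\subseteq\bM'\subseteq\prod_i GL_{m_i}(D)$, and $\prod_i GL_{m_i}(D)$ and $\prod_i SL_{m_i}(D)$ are $F$-inner forms of $\prod_i GL_{n_i}$ and of its derived group $\prod_i SL_{n_i}$, so $\bM'_{\der}=\prod_i SL_{m_i}(D)=\tbM'_{\der}$. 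Consequently $\Pi_{\phi}(M')=\Pi_{\tsigma'}(M')$, and Proposition~\ref{simply transitive on the set} applied to $(M',\tM')$ and any $\sigma'\in\Pi_{\phi}(M')$ gives the claim.

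I do not anticipate a real obstacle here: essentially all the work is already done in Proposition~\ref{simply transitive on the set} and in the construction of the $L$-packets, so the lemma amounts to assembling these pieces. The only points needing attention are (i) the verification that $(\bM,\tbM)$ and $(\bM',\tbM')$ fall under the hypothesis~\eqref{cond on G}, so that Tadi{\'c}'s restriction theory may be invoked, and (ii) the bookkeeping identifying $\Pi_{\phi}(M)$ and $\Pi_{\phi}(M')$, defined via Labesse's lifting of Langlands parameters and the Jacquet--Langlands correspondence, with the restriction constituents $\Pi_{\tsigma}(M)$ and $\Pi_{\tsigma'}(M')$ of Section~\ref{section for Tadic results}, together with the fact---already noted in Remark~\ref{remark for lifting}---that these sets, and the stabilizers $\tM_{\sigma}$, $\tM'_{\sigma'}$, are independent of the choices made.
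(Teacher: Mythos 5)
Your proposal is correct and follows exactly the route of the paper's own proof, which is a one-line citation of Proposition \ref{simply transitive on the set} together with the construction of $L$-packets in Section \ref{Construction of $L$-packets}; you have simply written out the verification of the hypothesis \eqref{cond on G} and the identification $\Pi_{\phi}(M)=\Pi_{\tsigma}(M)$ that the paper leaves implicit. No issues.
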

\begin{proof}
This is a consequence of Proposition \ref{simply transitive on the set} and our construction of $L$-packets in Section \ref{Construction of $L$-packets}.
\end{proof}
 We now  consider the cardinality of the $L$-packets $\Pi_{\phi}(M)$ and $\Pi_{\phi}(M').$
\begin{pro} \label{size of L-packets}
Let $\Pi_{\phi}(M)$ be an $L$-packet associated to an an elliptic tempered $L$-parameter $\phi : W_F \times SL_2(\CC) \rightarrow \widehat{M}.$ Then we have
\[
\left| \Pi_{\phi}(M') \right| 
\; \Big{|} \; 
\left| \Pi_{\phi}(M) \right|
~ \text{and} ~ 
\left| \Pi_{\phi}(M) \right| 
\; \Big{|} \; 
\left| H^1(F, \pi_0(Z(\bM))) \right|.
\]
\end{pro}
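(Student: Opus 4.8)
The plan is to reduce both divisibility statements to counting orbits under the conjugation actions established in Section~\ref{section for Tadic results} and in Lemma~\ref{simply transitive}, and then to compare the relevant stabilizers.  First I would use Lemma~\ref{simply transitive}, which gives bijections $\tM/\tM_\sigma \s \Pi_\phi(M)$ for any $\sigma\in\Pi_\phi(M)$ and $\tM'/\tM'_{\sigma'} \s \Pi_\phi(M')$ for any $\sigma'\in\Pi_\phi(M')$.  Hence $|\Pi_\phi(M)| = [\tM:\tM_\sigma]$ and $|\Pi_\phi(M')| = [\tM':\tM'_{\sigma'}]$.  Both indices equal the order of the character group of the finite abelian group by which we are inducing: indeed, $\tG/(Z(\tG)\cdot G)$-type arguments show that restriction from $\tM$ to $M$ and from $\tM'$ to $M'$ is governed by the same finite quotient, because $\tM/M \s F^\times \s \tM'/M'$ by \eqref{gp iso} and the stabilizer $\tM_\sigma$ (resp. $\tM'_{\sigma'}$) corresponds, via Clifford theory, to the group of characters $\eta\in(\tM/M)^D$ (resp. $(\tM'/M')^D$) with ${\tsigma\otimes\eta\s\tsigma}$ (resp. ${\tsigma'\otimes\eta\s\tsigma'}$).

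For the first divisibility $|\Pi_\phi(M')|\,\big|\,|\Pi_\phi(M)|$, the key point is that the stabilizer subgroup of $\tsigma$ in $(\tM/M)^D$ is \emph{contained} in that of $\tsigma'$ in $(\tM'/M')^D$ under the identification \eqref{gp iso}.  I would argue this using the Jacquet-Langlands transfer of characters (Remark~\ref{transfer characters via JL}): if $\tsigma\otimes\eta\s\tsigma$ then applying $\mathbf{C}$ and using $\mathbf{C}(\tsigma\otimes(\eta\circ\det))=\mathbf{C}(\tsigma)\otimes(\eta\circ\Nrd)=\tsigma'\otimes(\eta\circ\Nrd)$ gives $\tsigma'\otimes\eta\s\tsigma'$ (identifying $\eta$ on $F^\times$ with its pullbacks via $\det$ and $\Nrd$).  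Since $\mathbf{C}$ is a bijection this gives an inclusion of stabilizer subgroups of $(F^\times)^D$, hence a surjection on the quotients $\tM/\tM_\sigma \twoheadrightarrow \tM'/\tM'_{\sigma'}$ — wait, more precisely the quotient groups, being cyclic-like quotients of $(F^\times)^D$ by nested subgroups, have orders with the divisibility in the stated direction.  Concretely: $|\Pi_\phi(M')| = [(F^\times)^D : \mathrm{Stab}(\tsigma')]$ divides $[(F^\times)^D:\mathrm{Stab}(\tsigma)] = |\Pi_\phi(M)|$ because $\mathrm{Stab}(\tsigma)\subseteq\mathrm{Stab}(\tsigma')$.

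For the second divisibility $|\Pi_\phi(M)|\,\big|\,|H^1(F,\pi_0(Z(\bM)))|$, I would reinterpret $[\tM:\tM_\sigma]$ cohomologically.  Using \eqref{tilde M} and its dual, the group $\tM/M\s F^\times$ and the stabilizer condition translate into a statement about the component group $S_\phi(\widehat M)=\pi_0(C_\phi(\widehat M))$: by Clifford theory combined with the local Langlands correspondence for $GL_n$, the index $[\tM:\tM_\sigma]$ equals the order of the image of $S_\phi(\widehat M)$ in $\pi_0(Z(\widehat M)\backslash\text{something})$, and since $Z(\widehat M)$ is connected (Remark~\ref{conn center}) the relevant component group is a subquotient of $S_\phi(\widehat M)$.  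Then Tate-Nakayama / Kottwitz duality identifies the pertinent component group with a subgroup of $H^1(F,\pi_0(Z(\bM)))$ — or more directly, the fibers of $\tG \to \tG/G$ over the Galois cohomology sequence in Remark~\ref{rem for isom between two quotients} realize $\Pi_\phi(M)$ as a torsor under a subgroup of a group whose order is $|H^1(F,\pi_0(Z(\bM)))|$ (this is the classical fact for $SL_n$, due to Gelbart-Knapp and the structure of restriction, that the $L$-packet size divides $|H^1(F,\pi_0(Z(\widehat{G})))|$-type invariants).

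The main obstacle I anticipate is making the second step fully rigorous: precisely identifying the finite group through which $\tM$ acts on $\Pi_\phi(M)$ with a subgroup of $H^1(F,\pi_0(Z(\bM)))$, rather than merely with a subquotient of $(F^\times)^D$.  This requires carefully tracking the Langlands correspondence for $GL_{n_i}(F)$ through the exact sequences of Remarks~\ref{conn center} and \ref{L-groups}, and invoking Kottwitz's description \cite{kot84} of $\pi_0(Z(\widehat M))$ versus $H^1(F,\pi_0(Z(\bM)))$; the appearance of $SL_2(\CC)$ in the parameter and the fact that $\phi$ is only elliptic (not discrete) means one must be slightly careful that $C_\phi(\widehat M)^\circ\subseteq Z(\widehat M)$ so that $S_\phi(\widehat M)$ is itself finite and the counting makes sense.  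The first divisibility, by contrast, should follow cleanly from Remark~\ref{transfer characters via JL} and Lemma~\ref{simply transitive} with only routine bookkeeping.
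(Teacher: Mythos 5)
Your reduction of the first divisibility to a comparison of character stabilizers has two problems, and they compound. First, the identification you rely on is wrong: $|\Pi_\phi(M')|$ is not the index $[(F^\times)^D : \mathrm{Stab}(\tsigma')]$ (that index is not even finite), and $\tM'/\tM'_{\sigma'}$ is not dual to the stabilizer $Y(\tsigma') = \{\eta : \tsigma'\otimes\eta\simeq\tsigma'\}$ in the naive way you suggest. The correct Clifford-theoretic statement (Proposition 2.4 of \cite{tad92}, which is what the paper uses) is $|Y(\tsigma')| = \dim_\CC\Hom_{M'}(\tsigma',\tsigma') = |\Pi_\phi(M')|\cdot\mathbf{m}(\sigma')^2$, where $\mathbf{m}(\sigma')$ is the multiplicity of $\sigma'$ in $\tsigma'|_{M'}$. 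Second, your ``key point'' --- a one-sided containment $\mathrm{Stab}(\tsigma)\subseteq\mathrm{Stab}(\tsigma')$ --- is actually an equality: since $\mathbf{C}$ is a bijection compatible with twisting (Remark \ref{transfer characters via JL}), $\tsigma\otimes\eta\simeq\tsigma$ if and only if $\tsigma'\otimes\eta\simeq\tsigma'$, so $Y(\tsigma)=Y(\tsigma')$. A containment of stabilizers therefore cannot be the source of the divisibility. The actual source is the multiplicity: $\mathbf{m}(\sigma)=1$ for the split form, so $|\Pi_\phi(M)| = |Y(\tsigma)|$, while multiplicity one can fail for $GL_m(D)\supset SL_m(D)$, giving $|\Pi_\phi(M)| = |Y(\tsigma)| = |Y(\tsigma')| = |\Pi_\phi(M')|\cdot\mathbf{m}(\sigma')^2$. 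Your proposal never engages with $\mathbf{m}(\sigma')$, so the first divisibility does not follow from what you wrote.

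For the second divisibility your sketch is, as you concede, incomplete. The paper needs neither Tate--Nakayama duality nor the component group $S_\phi(\widehat M)$; it uses the exact sequence $1\to Z(\bM)\to Z(\tbM)\times\bM\xrightarrow{\lambda}\tbM\to 1$ (valid because $\bM$ and $\tbM$ share a derived group). Taking Galois cohomology and using $H^1(F,Z(\tbM))=H^1(F,\bM)=1$ gives an embedding $\tM/(Z(\tM)\cdot M)\hookrightarrow H^1(F,Z(\bM))$, and Hilbert 90 applied to $Z(\bM)^\circ$ gives $H^1(F,Z(\bM))\hookrightarrow H^1(F,\pi_0(Z(\bM)))$. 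Since $Z(\tM)\cdot M\subseteq\tM_\sigma$, the index $[\tM:\tM_\sigma]=|\Pi_\phi(M)|$ divides $[\tM:Z(\tM)\cdot M]$, which in turn divides $|H^1(F,\pi_0(Z(\bM)))|$. The missing idea in your write-up is precisely this exact sequence; without it, the passage from $\tM/\tM_\sigma$ to $H^1(F,\pi_0(Z(\bM)))$ remains a gap.
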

\begin{proof}
We denote by $\bold m(\sigma)$ and $\bold m(\sigma')$ the multiplicities of $\sigma$ and $\sigma'$ in $\tsigma|_{M}$ and $\tsigma'|_{M'},$ respectively. 
We set 
\begin{align*}
Y(\tsigma) &:= \{ \eta \in (\tM/M)^D : 
   \tsigma \s \tsigma \otimes \eta \},
   \\
Y(\tsigma') &:= \{ \eta \in (\tM'/M')^D : 
   \tsigma' \s \tsigma' \otimes \eta \}.
\end{align*}
From Remarks \ref{rem for isom between two quotients} and \ref{transfer characters via JL} we see $Y(\tsigma')=Y(\tsigma).$ 
Then, from \cite[Proposition 2.4]{tad92}, we have
\[
\left| Y(\tsigma') \right| = \dim_{\CC}{\Hom}_{M'}(\tsigma', \tsigma').
\]
We note that $\dim_{\CC}{\Hom}_{M'}(\tsigma', \tsigma')$ equals the product of $\bold m(\sigma')^2$ and the number of irreducible inequivalent constituents in $\tsigma'|_{M'}$ by Schur's lemma.
Since $\Pi_{\tsigma'}(M')$ is the set of irreducible inequivalent constituents in $\tsigma'|_{M'}$ by definition, we thus have
\begin{equation} \label{equality in mul}
\left| Y(\tsigma') \right| = \left| \Pi_{\tsigma'}(M') \right| \cdot \bold m(\sigma')^2.
\end{equation}
Note  \eqref{equality in mul} holds for $M,$ as well. Since the multiplicity $\bold m(\sigma)$ of $\sigma$ in $\tsigma|_{M}$ equals $1,$ by \cite[Proposition 2.8]{tad92}, we have $\left| Y(\tsigma) \right| =\left| \Pi_{\tsigma}(M) \right|.$ Hence, we have proved the first assertion $\left| \Pi_{\phi}(M') \right| 
\; \Big{|} \; 
\left| \Pi_{\phi}(M) \right|.$

Consider a homomorphism $\lambda : Z({\bM}) \times \bM \rightarrow \tbM$ defined as $\lambda(z, m) = zm.$
Since $\bM$ and $\tbM$ have the same derived group, we get an exact sequence of algebraic groups
\begin{equation} \label{exact seq centers}
1 
\rightarrow 
Z(\bM) 
\rightarrow
Z({\tbM}) \times \bM 
\overset{\lambda}{\rightarrow}
\tbM
\rightarrow
1.
\end{equation}
We note that the injection of $Z(\bM)$ into $Z({\widetilde \bM})\times \bM$ is $z\mapsto (z,z^{-1}).$  
Applying Galois cohomology to \eqref{exact seq centers}, we have an exact sequence
\[ 
\cdots \rightarrow
Z({\tM}) \times M 
\overset{\lambda}{\rightarrow}
\tM
\rightarrow
H^{1}(F, Z(\bM))
\rightarrow H^{1}(F, Z({\tbM}) \times \bM) \rightarrow \cdots.
\]
Note that $H^{1}(F, Z({\tbM}) \times \bM) = H^{1}(F, Z({\tbM})) \times H^{1}(F, \bM) =1$ (see Remark \ref{rem for isom between two quotients}) and the cokernel of $\lambda$ is $\tM / (Z({\tM}) \cdot M).$
Consider another exact sequence 
\[
1 \rightarrow Z(\bM)^{\circ} \rightarrow Z(\bM) \rightarrow \pi_0(Z(\bM)) \rightarrow 1.
\]
Since $H^{1}(F, Z(\bM)^{\circ}) = 1$ by Hilbert's Theorem 90, we have 
\[
H^{1}(F, Z(\bM)) \hookrightarrow H^{1}(F, \pi_0(Z(\bM))).
\]
Hence, $\tM/(Z({\tM}) \cdot M)$ must be a subgroup of $H^{1}(F, \pi_0(Z(\bM))).$ 
So, we have 
\[ 
\tM / (Z({\tM}) \cdot M)
\hookrightarrow  
H^{1}(F, Z(\bM)) 
\hookrightarrow 
H^{1}(F, \pi_0(Z(\bM))). 
\]
Since $(Z({\tM}) \cdot M) \subseteq \tM_{\sigma} \subseteq \tM$ \cite[Corollary 2.3]{tad92}, we deduce the second assertion from Lemma \ref{simply transitive}. Thus, the proof is complete. 
\end{proof}
\begin{exa}
Suppose $\bG=GL_{n_1 + n_2},$ and $\bM=GL_{n_1} \times GL_{n_2}.$ Since $Z(\bM)$ is connected, any $L$-packet of $M$ is a singleton which has been proved in \cite{ht01, he00}.
\end{exa}
\begin{exa}
Suppose $F=\QQ_p,$ $p \neq 2,$ $M=G=SL_2(F)$ and $M'=G'=SL_1(D),$ where $D$ is the quaternion division algebra over $F.$ Note that $H^1(F, \pi_0(Z(\bM))) \s F^{\times} /  (F^{\times})^2,$ since $Z(\bM) = Z({\bM'}) \s \mu_2 = \pi_0(Z(\bM)).$ It turns out that $$\left| H^1(F, \pi_0(Z(\bM))) \right| = 4.$$ Hence, by Proposition \ref{size of L-packets}, the cardinality of any an elliptic tempered $L$-packet of $M$ is either $1,$ $2$ or $4.$ Also, the cardinality of an elliptic tempered $L$-packets of $M'$ can be determined using the first assertion of  Proposition \ref{size of L-packets}.
\end{exa}
\begin{cor} \label{cor about size of L-packet}
If $\pi_0(Z(\bM))=1,$ that is, $Z(\bM)$ is connected, then every an elliptic tempered $L$-packet of $M$ is a singleton.
\end{cor}
\begin{exa}
Suppose $F=\QQ_p,$ $\bG=SL_3$ and $\bM = (GL_2 \times GL_1) \cap SL_3.$ Since the coordinate ring $F[Z(\bM)] = F[x, y] / (x^2y-1)$ and $x^2y-1$ is irreducible, $Z(\bM)$ is connected. Hence, due to Corollary \ref{cor about size of L-packet}, any an elliptic tempered $L$-packet of $M$ is singleton. This argument is clear for the obvious reason that $\bM \s GL_2.$
\end{exa}
\begin{rem}
Let $\phi : W_F \times SL_2(\CC) \rightarrow \widehat{G}$ be a tempered $L$-parameter. Let $\widehat{M}$ be a minimal Levi subgroup in the sense that $\widehat{M}$ contains the image of $\phi$ (see \cite[Section 3.4]{bo79} for the definition of Levi subgroup of $^{L}M$). Then $\phi$ becomes an elliptic tempered parameter of $M.$ Choose a member $\tau \in \Pi_{\phi}(M)$ and recall the Knapp-Stein $R$-group $R_{\tau}$ for $\tau.$ Then, from Lemma \ref{simply transitive} we have 
\[
\left| \Pi_{\phi}(G)  \right|  
\; \Big{|} \; 
\big( \left| R_{\tau}  \right| \cdot \left| \Pi_{\phi}(M)  \right| \big)
\; \Big{|} \; 
\big( \left| R_{\tau}  \right| \cdot \left| H^1(F, \pi_0(Z(\bM)))  \right| \big).
\]
The same is true for $\bG'.$ In fact, for the split case $\bG$, we have an equality
\[
\left| \Pi_{\phi}(G)  \right|  
= 
\left| R_{\tau}  \right| \cdot \left| \Pi_{\phi}(M)  \right|
\]
from Theorem \ref{goldberg thm2.4} and Proposition \ref{Gelbart-Knapp} in Section \ref{R-group for SL}.
\end{rem}
\begin{rem}
All statements in Section \ref{L-packets} admit an obvious generalization to the case of any connected reductive group $\bM$ over $F$ such that 
$
\bM_{\der} = \tbM_{\der} \subseteq \bM \subseteq \tbM,
$
where $\tbM = \prod_{i=1}^{k}GL_{n_i}$.
\end{rem}
\section{$R$-groups for $SL_n$ and its Inner Forms} \label{R-groups for SL and SL(D)}

In this section we first review the results of the second named author \cite{go94sl} and Gelbart-Knapp \cite{gk82} and prove that Knapp-Stein, Arthur and Endoscopic $R$-groups are all identical for $SL_n$ (Theorem \ref{conc}). Second, we discuss the Knapp-Stein $R$-group for an $F$-inner form of $SL_n$ and establish its connection with $R$-groups for $SL_n.$ Throughout Section \ref{R-groups for SL and SL(D)}, we continue with the notation in Sections \ref{pre} and \ref{L-packets}.
\subsection{$R$-groups for $SL_n$ Revisited} \label{R-group for SL}
Let $\tbG = GL_n$ and $\bG = SL_n$ be over $F.$
Let $\bM$ be an $F$-Levi subgroup of $\bG,$ and let $\tbM$ be an $F$-Levi subgroup of $\tbG$ such that $\tbM = \bM \cap \bG.$
Let $\sigma \in \Pi_{\disc}(M)$ be given. Choose $\tsigma \in \Irr(\tM)$ such that $\sigma \hookrightarrow \tsigma|_{M}.$ We identify Weyl groups $W_M$ and $W_{\tM}$ as a subgroup of the group $S_k$ of permutations on $k$ letters. Let  $\phi : W_F \times SL_2(\CC) \rightarrow \widehat{M}$ be an elliptic tempered $L$-parameter associated to $\sigma.$ Let $\Pi_{\phi}(M)$ be an $L$-packet of $M$ associated to $\phi.$
We define the following groups:
\begin{align*}
\bar{L}(\tsigma) &:= \{ \eta \in (\tM/M)^D : \; 
  ^w \tsigma \s \tsigma \otimes \eta ~ \text{for some} ~ w \in W_M \},
\\
X(\tsigma) &:= \{ \eta \in (\tM/M)^D : 
   \tsigma \s \tsigma \otimes \eta \},
\\
X({\ii}_{\tG \tM}(\tsigma)) &:= \{ \eta \in (\tG/G)^D : {\ii}_{\tG\tM}(\tsigma) \s {\ii}_{\tG \tM}(\tsigma) \otimes \eta \}.
\end{align*}
Since any character of $GL_n(F)$ is of the form $\eta \circ \det$ for some character of $F^{\times},$ 
we often make no distinct between $\eta$ and $\eta \circ \det.$ Further, since $\tbM$ is of the form $\prod_{i=1}^{k} GL_{n_i},$ we simply write $\eta$ for $\prod_{i=1}^{k} (\eta_i \circ \det_i) \in (\tM/M)^D,$ where $\eta_i$ denotes a character of $F^{\times}$ and $\det_i$ denotes the determinant of $GL_{n_i}(F)$ for each $i.$

\begin{lm} (Goldberg, \cite[Lemma 2.3]{go94sl}) \label{lemma by goldberg}
Let $\sigma \in \Pi_{\disc}(M)$ be given. For any lift $\tsigma \in \Pi_{\disc}(\tM)$ such that $\sigma \hookrightarrow \tsigma |_{M},$ we have
\[
W(\sigma) = \{ w \in W_M : \; 
  ^w \tsigma \s \tsigma \otimes \eta ~ \text{for some} ~ \eta \in (\tM/M)^D
\}.
\]
\end{lm}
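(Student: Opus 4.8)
The plan is to establish the two inclusions separately; only the reverse one is delicate, and it uses multiplicity one twice.

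For the forward inclusion, suppose $w\in W(\sigma)$ and fix a representative $n_w\in N_G(A_M)$ of $w$. Since $A_M$ is also the split component of $\tM$, this $n_w$ normalizes $\tM$ as well as $M$, and conjugation by $n_w$ commutes with restriction from $\tM$ to $M$; hence ${}^{n_w}\tsigma\in\Irr(\tM)$ and ${}^w\sigma\hookrightarrow{}^{n_w}\tsigma|_M$. As ${}^w\sigma\s\sigma$, we get $\sigma\in\Pi_{{}^w\tsigma}(M)\cap\Pi_{\tsigma}(M)$, which is therefore non-empty, so Proposition~\ref{pro for lifting} yields a character $\eta\in(\tM/M)^D$ with ${}^w\tsigma\s\tsigma\otimes\eta$. (This direction uses nothing special to $SL_n$.)

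For the reverse inclusion, let $w\in W_M$ satisfy ${}^w\tsigma\s\tsigma\otimes\eta$ with $\eta\in(\tM/M)^D$; I want to conclude ${}^w\sigma\s\sigma$. Since $\eta|_M$ is trivial, restriction of the isomorphism gives ${}^{n_w}(\tsigma|_M)={}^w\tsigma|_M=(\tsigma\otimes\eta)|_M=\tsigma|_M$ as representations of $M$. By multiplicity one for restriction from $GL_n(F)$ to $SL_n(F)$ (\cite[Proposition~2.8]{tad92}, i.e.\ $\bold m(\sigma)=1$; cf.\ the proof of Proposition~\ref{size of L-packets}), the restriction $\tsigma|_M=\bigoplus_{\tau\in\Pi_{\tsigma}(M)}\tau$ is multiplicity-free, and from ${}^w\sigma\hookrightarrow{}^w\tsigma|_M=\tsigma|_M$ we obtain ${}^w\sigma\in\Pi_{\tsigma}(M)$. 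To identify ${}^w\sigma$ among the members of $\Pi_{\tsigma}(M)$, I would pass to induced representations. On one hand $n_w$ normalizes $M$ and parabolic induction of the discrete series datum $\sigma$ is insensitive (up to isomorphism) to $W_M$-conjugation, so $\ii_{GM}({}^w\sigma)\s\ii_{GM}(\sigma)$. On the other hand $\tG=G\cdot\tM$ (the determinant already carries $\tM=\prod_i GL_{n_i}(F)$ onto $F^{\times}$), so the Gelbart--Knapp restriction formula \cite{gk82} gives
\[
\ii_{\tG\tM}(\tsigma)\big|_{G}\ =\ \ii_{GM}\big(\tsigma|_M\big)\ =\ \bigoplus_{\tau\in\Pi_{\tsigma}(M)}\ii_{GM}(\tau);
\]
since $\ii_{\tG\tM}(\tsigma)$ is irreducible (for $GL_n(F)$ there is no reducibility in a representation induced from a discrete series of a Levi subgroup) and the restriction of an irreducible representation of $GL_n(F)$ to $SL_n(F)$ is multiplicity-free, the displayed sum is multiplicity-free, so $\ii_{GM}(\tau)$ and $\ii_{GM}(\tau')$ have no irreducible constituent in common for distinct $\tau,\tau'\in\Pi_{\tsigma}(M)$. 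Combining the two observations, $\ii_{GM}({}^w\sigma)\s\ii_{GM}(\sigma)$ forces ${}^w\sigma\s\sigma$, i.e.\ $w\in W(\sigma)$.

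The step I expect to be the crux is the pairwise disjointness of the representations $\ii_{GM}(\tau)$, $\tau\in\Pi_{\tsigma}(M)$: it rests on applying multiplicity one twice, to $\tsigma|_M$ and to $\ii_{\tG\tM}(\tsigma)|_G$, and it is exactly this that breaks down for the inner forms $GL_m(D)\supseteq SL_m(D)$ \cite{hs11}, so that for $M'\subseteq G'$ one obtains only $W(\sigma')\subseteq\{w:{}^w\tsigma'\s\tsigma'\otimes\eta\text{ for some }\eta\}$ rather than equality. The remaining ingredients are routine: choosing a single $n_w\in N_G(A_M)$ that normalizes both $M$ and $\tM$, and the independence of $\ii_{GM}$ from the parabolic for the tempered inducing datum.
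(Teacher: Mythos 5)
Your proof is correct, and it follows essentially the same route as the source: the paper does not reprove this lemma but quotes it from \cite{go94sl}, whose argument (as the introduction here explicitly notes) hinges on exactly the ingredient you isolate, namely multiplicity one of restriction from $GL_n(F)$ to $SL_n(F)$ applied together with the irreducibility of $\ii_{\tG\tM}(\tsigma)$ and Tadi\'c's lifting criterion (Proposition~\ref{pro for lifting}) to force ${}^w\sigma \s \sigma$ rather than merely ${}^w\sigma \in \Pi_{\tsigma}(M)$. Your closing remark correctly identifies why only the inclusion $\subseteq$ survives for $SL_m(D)$, which is precisely the point of Proposition~\ref{analogue of goldberg lemma}.
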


\begin{thm} (Goldberg, \cite[Theorem 2.4]{go94sl}) \label{goldberg thm2.4} 
The Knapp-Stein $R$-group $R_{\sigma}$ is isomorphic to 
$\bar{L}(\tsigma) / X(\tsigma).$
\end{thm}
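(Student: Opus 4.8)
The plan is to prove the isomorphism in three stages: recast $R_\sigma$ as a quotient of $W(\sigma)$; build a surjection $W(\sigma)\twoheadrightarrow\bar L(\tsigma)/X(\tsigma)$ and identify its kernel; and then show that kernel is exactly $W'_\sigma$. First I would invoke the Knapp--Stein--Silberger decomposition (Theorem \ref{thm for Knapp-Stein-Sil}): since $W(\sigma)=R_\sigma\ltimes W'_\sigma$, the group $R_\sigma$ is a complement to the normal subgroup $W'_\sigma$, so $R_\sigma\cong W(\sigma)/W'_\sigma$ as abstract groups. Hence it suffices to exhibit an isomorphism $W(\sigma)/W'_\sigma\cong\bar L(\tsigma)/X(\tsigma)$.

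Next I would define a map $\Psi\colon W(\sigma)\to\bar L(\tsigma)/X(\tsigma)$ by sending $w$ to the class $[\eta]$, where $\eta\in(\tM/M)^D$ is any character with ${}^w\tsigma\s\tsigma\otimes\eta$; such $\eta$ exists by Lemma \ref{lemma by goldberg}, and it is unique modulo $X(\tsigma)$, so $\Psi$ is well defined. The one point needing care for the homomorphism property is that $W_M$ acts trivially on $(\tM/M)^D$: by Remark \ref{rem for isom between two quotients} the determinant identifies $\tM/M$ with $F^\times$, and the determinant is invariant under conjugation by $N_G(A_M)$, so ${}^w\eta=\eta$ for every $w\in W_M$; consequently ${}^{w_1w_2}\tsigma={}^{w_1}({}^{w_2}\tsigma)\s\tsigma\otimes\eta_{w_1}\eta_{w_2}$, so $\Psi$ is a homomorphism (and in passing $\bar L(\tsigma)$ is a subgroup of $(\tM/M)^D$ containing $X(\tsigma)$). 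Surjectivity of $\Psi$ is immediate from the definition of $\bar L(\tsigma)$ together with Lemma \ref{lemma by goldberg}, which guarantees that the Weyl element realizing a given $\eta$ lies in $W(\sigma)$. Finally $w\in\ker\Psi$ iff $\eta_w\in X(\tsigma)$ iff $\tsigma\otimes\eta_w\s\tsigma$ iff ${}^w\tsigma\s\tsigma$; setting $W(\tsigma):=\{w\in W_M:{}^w\tsigma\s\tsigma\}$ (which lies in $W(\sigma)$ by Lemma \ref{lemma by goldberg} with $\eta=1$), I get $W(\sigma)/W(\tsigma)\cong\bar L(\tsigma)/X(\tsigma)$.

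It then remains to prove $W'_\sigma=W(\tsigma)$. Writing $\tsigma\s\tsigma_1\boxtimes\cdots\boxtimes\tsigma_k$ and identifying $W_M$ with a group of block permutations, $W(\tsigma)$ is the stabilizer of $\tsigma$, hence is generated by the transpositions $w_\beta$ (reflections in reduced roots $\beta$ joining two equal-sized blocks $i,j$) for which $\tsigma_i\s\tsigma_j$; on the other hand $W'_\sigma$ is generated by the $w_\beta$ with $\mu_\beta(\sigma)=0$ (roots between blocks of unequal size carry no reflection, hence are irrelevant). So it suffices to show, for $\beta$ joining equal-sized blocks $i,j$, that $\mu_\beta(\sigma)=0\iff\tsigma_i\s\tsigma_j$. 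For this I would use two facts. (a) Plancherel measures are unchanged by restriction: the standard intertwining operator for the rank-one Levi $M_\beta=\tM_\beta\cap G$ is the restriction of the one for $\tM_\beta$, since both are given by the same integral over the unipotent radical $N_\beta$ of the corresponding parabolic (the unipotent radicals of parabolics of $SL_n$ and $GL_n$ coincide), so $\mu_\beta(\sigma)=\mu_\beta(\tsigma)$. (b) Since $\tM_\beta$ is of $GL$-type and $\tsigma$ is a (tempered) discrete series, $\ii_{\tM_\beta\tM}(\tsigma)$ is irreducible (parabolic induction of unitary discrete series of general linear groups is irreducible, Bernstein--Zelevinsky); applying Theorem \ref{thm for Knapp-Stein-Sil} inside $\tM_\beta$ forces the $R$-group there to be trivial, so $W(\tsigma)^{\tM_\beta}=(W')^{\tM_\beta}_{\tsigma}$, and this group is $\ZZ/2$ precisely when $w_\beta\tsigma\s\tsigma$, i.e. $\tsigma_i\s\tsigma_j$, while $(W')^{\tM_\beta}_\tsigma=\ZZ/2\iff\mu_\beta(\tsigma)=0$. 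Combining (a) and (b) yields the desired equivalence, hence $W'_\sigma=W(\tsigma)$, and therefore $R_\sigma\cong W(\sigma)/W'_\sigma=W(\sigma)/W(\tsigma)\cong\bar L(\tsigma)/X(\tsigma)$.

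The hard part is the last stage, the identification of $\Delta'_\sigma$: it rests on the invariance of Plancherel measures under restriction from $\tG$ to $G$ and on the rank-one reducibility analysis on the $GL$ side (irreducibility of induction from a maximal Levi of a general linear group applied to a unitary discrete series). Everything preceding it is formal once Lemma \ref{lemma by goldberg} and the triviality of the $W_M$-action on $(\tM/M)^D$ are available.
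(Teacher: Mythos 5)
This theorem is stated in the paper as a quoted result from \cite{go94sl} and is not reproved there; your argument is a correct reconstruction of Goldberg's original proof, following the same route (the semidirect product $W(\sigma)=R_\sigma\ltimes W'_\sigma$, the homomorphism $w\mapsto\eta_w$ with kernel the stabilizer $W(\tsigma)$, and the identification $W'_\sigma=W(\tsigma)$ via the invariance of Plancherel measures under restriction together with the irreducibility of induced-from-discrete-series on the $GL$ side). The only ingredient you take on faith is Lemma \ref{lemma by goldberg}, which is exactly where multiplicity one of restriction from $GL_n(F)$ to $SL_n(F)$ enters, and which is likewise cited rather than proved in this paper.
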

\noindent Note that ${\ii}_{\tG \tM}(\tsigma)$ is always irreducible since $\tsigma$ is a discrete series representation \cite[Theorem 4.2]{bz77}.
\begin{pro} (Gelbart-Knapp, \cite[Theorem 4.3]{gk82}) \label{Gelbart-Knapp}
There are group isomorphisms
\[
X(\tsigma) \s S_{\phi}(\widehat{M}) ~ \text{and} ~ X({\ii}_{\tG \tM}(\tsigma)) \s S_{\phi}(\widehat{G}).
\]
Consequently, both $S_{\phi}(\widehat{M})$ and $S_{\phi}(\widehat{G})$ are finite abelian groups. Further, $S_{\phi}(\widehat{M})^D$ and $S_{\phi}(\widehat{G})^D$ have  canonical simply transitive group actions on $\Pi_{\phi}(M)$ and $\Pi_{\phi}(G),$ respectively. 
\end{pro}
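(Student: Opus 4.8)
The plan is to prove the two dual-side isomorphisms by transporting the character-twisting structure through the local Langlands correspondence for $GL_n$, and then to read off the simply transitive actions from Clifford theory together with multiplicity one of restriction from $GL_n(F)$ to $SL_n(F)$. First, for $X(\tsigma)\s S_\phi(\widehat M)$: fix a lift $\widetilde\phi$ of $\phi$ to $\widehat{\tM}$ as in Section \ref{Construction of $L$-packets}, so that $\tsigma$ is the discrete series of $\tM$ attached to $\widetilde\phi$ by \cite{ht01,he00}, and recall the central extension $1\to\CC^\times=\widehat{(\tM/M)}\to\widehat{\tM}\xrightarrow{\,pr\,}\widehat M\to1$ from Remark \ref{L-groups}. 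For $\bar g\in C_\phi(\widehat M)$ with a lift $g\in\widehat{\tM}$, consider $x\mapsto g\,\widetilde\phi(x)\,g^{-1}\,\widetilde\phi(x)^{-1}$ on $W_F\times SL_2(\CC)$: this takes values in $\CC^\times$ (because $\bar g$ centralizes $\phi=pr\circ\widetilde\phi$) and is a continuous homomorphism trivial on $SL_2(\CC)$, hence defines a character $\chi_g$ of $W_F$, equivalently of $F^\times\s\tM/M$, so $\chi_g\in(\tM/M)^D$; moreover $\widetilde\phi\s\widetilde\phi\otimes\chi_g$, whence $\tsigma\s\tsigma\otimes\chi_g$ by the twisting compatibility of \cite{ht01,he00}, so $\chi_g\in X(\tsigma)$. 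Routine checks show $\bar g\mapsto\chi_g$ is a well-defined group homomorphism $C_\phi(\widehat M)\to X(\tsigma)$, which is surjective (given $\eta\in X(\tsigma)$, the parameters $\widetilde\phi$ and $\widetilde\phi\otimes\chi_\eta$ are $\widehat{\tM}$-conjugate, and any conjugator projects to a preimage of $\eta$) and has kernel $pr\bigl(C_{\widetilde\phi}(\widehat{\tM})\bigr)$. Since $\tsigma$ is a discrete series, $\widetilde\phi=(\widetilde\phi_i)_i$ with each $\widetilde\phi_i\colon W_F\times SL_2(\CC)\to GL_{n_i}(\CC)$ irreducible, so Schur's lemma gives $C_{\widetilde\phi}(\widehat{\tM})=Z(\widehat{\tM})$, whose image $Z(\widehat M)$ is connected (Remark \ref{conn center}); and $X(\tsigma)$ is finite (for instance because $\phi$ is elliptic, so $C_\phi(\widehat M)/Z(\widehat M)$ is finite, or by a central-character argument). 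Hence the connected kernel $Z(\widehat M)$ must equal $C_\phi(\widehat M)^\circ$, and $X(\tsigma)\s C_\phi(\widehat M)/C_\phi(\widehat M)^\circ=S_\phi(\widehat M)$.

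For $X({\ii}_{\tG\tM}(\tsigma))\s S_\phi(\widehat G)$, note that ${\ii}_{\tG\tM}(\tsigma)$ is irreducible by \cite[Theorem 4.2]{bz77} with $L$-parameter the image of $\widetilde\phi$ in $\widehat{\tG}=GL_n(\CC)$, namely $\bigoplus_i\widetilde\phi_i$, a sum of irreducibles. Running the identical argument with $GL_n(\CC)\twoheadrightarrow PGL_n(\CC)=\widehat G$ in place of $\widehat{\tM}\twoheadrightarrow\widehat M$, the surjection $C_\phi(\widehat G)\twoheadrightarrow X({\ii}_{\tG\tM}(\tsigma))$ has kernel the image of $C_{\widetilde\phi}(GL_n(\CC))$, which by Schur is a product of general linear groups, hence connected; since $X({\ii}_{\tG\tM}(\tsigma))$ is again finite (central-character argument on $GL_n(F)$), this kernel is $C_\phi(\widehat G)^\circ$, and $X({\ii}_{\tG\tM}(\tsigma))\s S_\phi(\widehat G)$. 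Both $S_\phi(\widehat M)$ and $S_\phi(\widehat G)$ are then finite abelian, being isomorphic to groups of characters.

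For the simply transitive actions, I would invoke multiplicity one of restriction from $GL_n(F)$ to $SL_n(F)$ (\cite[Proposition 2.8]{tad92}): then $\tsigma|_M=\bigoplus_{\tau\in\Pi_\phi(M)}\tau$, and writing $\tM_\sigma\triangleleft\tM$ for the stabilizer of a member $\sigma$ (independent of the choice, as $\tM_\sigma$ is normal in $\tM$), Clifford theory gives $\tsigma=\operatorname{Ind}_{\tM_\sigma}^{\tM}\rho$ with $\rho|_M=\sigma$ irreducible. For $\eta\in(\tM/M)^D$ one has $\tsigma\otimes\eta=\operatorname{Ind}_{\tM_\sigma}^{\tM}(\rho\otimes\eta)$, and since restriction to $M$ separates the $\tM$-conjugates of $\sigma$, Mackey's intertwining formula shows $\tsigma\s\tsigma\otimes\eta$ if and only if $\rho\s\rho\otimes\eta$; Schur's lemma on the irreducible $\rho|_M$ makes this equivalent to $\eta|_{\tM_\sigma}=1$. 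Therefore $X(\tsigma)=(\tM/\tM_\sigma)^D$ canonically, so $S_\phi(\widehat M)^D\s\tM/\tM_\sigma$ by Pontrjagin duality, and $\tM/\tM_\sigma$ acts on $\Pi_\phi(M)$ by conjugation simply and transitively (Lemma \ref{simply transitive}). The same argument applies to ${\ii}_{\tG\tM}(\tsigma)$: since $\det(\tP)=F^\times$ we have $\tG=\tP\,G$, hence ${\ii}_{\tG\tM}(\tsigma)|_G=\bigoplus_{\tau\in\Pi_\phi(M)}{\ii}_{GM}(\tau)$, whose irreducible constituents are precisely the members of $\Pi_\phi(G)$, and this restriction is again multiplicity free by \cite[Proposition 2.8]{tad92}; thus $X({\ii}_{\tG\tM}(\tsigma))=(\tG/\tG_\rho)^D$ for a constituent $\rho$, and $S_\phi(\widehat G)^D\s\tG/\tG_\rho$ acts on $\Pi_\phi(G)$ simply and transitively (Proposition \ref{simply transitive on the set}).

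The principal obstacle is the multiplicity-one input of the last step: the conclusions that $X(\tsigma)$ is the \emph{full} character group $(\tM/\tM_\sigma)^D$ rather than just an overgroup of it, and correspondingly that the dual actions are simply --- not merely --- transitive, both rest on $\rho|_M$ being irreducible, i.e. on $\mathbf{m}(\sigma)=1$. This is exactly the property that fails for $SL_m(D)$, which is what forces the alternative line of argument pursued in the rest of this paper. A secondary, purely formal difficulty is checking that the cochains $\chi_g$ are honest continuous characters of $W_F$ and that $\bar g\mapsto\chi_g$ respects the group operations.
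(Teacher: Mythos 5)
The paper offers no proof of this proposition: it is quoted directly from Gelbart--Knapp \cite[Theorem 4.3]{gk82} (cf.\ also \cite[\S 9]{sh83}), so there is nothing in the text to compare against. Your reconstruction is correct and follows the same route as the cited original --- the central-cocycle map $\bar g\mapsto\chi_g$ identifying $X(\tsigma)$ and $X({\ii}_{\tG\tM}(\tsigma))$ with the component groups, Schur's lemma plus ellipticity to pin the kernels down to the identity components, and Clifford theory together with Tadi\'c's multiplicity one for $GL_n(F)\supset SL_n(F)$ to obtain the simply transitive dual actions --- and your closing remark correctly isolates multiplicity one as precisely the input that fails for $SL_m(D)$ and forces the paper's alternative arguments there.
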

\begin{rem}
Since $X(\tsigma)$ is a subgroup of $X({\ii}_{\tG \tM}(\tsigma)),$ we notice that there is an embedding $\iota : S_{\phi}(\widehat{M}) \hookrightarrow S_{\phi}(\widehat{G}).$ We abuse notation to denote the quotient  $S_{\phi}(\widehat{G}) / \iota (S_{\phi}(\widehat{M}))$ by $S_{\phi}(\widehat{G}) / S_{\phi}(\widehat{M}).$ We call the quotient $S_{\phi}(\widehat{G}) / S_{\phi}(\widehat{M})$ \textit{the Endoscopic $R$-group}. We refer the reader to \cite[(7.1)]{art89ast} where the Endoscopic $R$-group is denoted by $R_{\phi}.$ 
\end{rem}
We have the following proposition.
\begin{pro} \label{pro 2 for thm 1} 
We have the identity  
\[
\bar{L}(\tsigma) = X({\ii}_{\tG \tM}(\tsigma))
\]
as subgroups of the group $(F^{\times})^D$ of characters on $F^{\times}.$
\end{pro}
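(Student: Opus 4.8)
The plan is to deduce the identity from the Bernstein--Zelevinsky theory of parabolic induction for $GL_n$, once the two character groups have been matched inside $(F^{\times})^D$. Recall from Remark~\ref{rem for isom between two quotients} that $\tM/M \simeq F^{\times}$, and that $\tG/G \simeq F^{\times}$ via the determinant. A character $\eta$ of $F^{\times}$ corresponds to $\eta\circ\det$ on $\tG$ and, after restriction to $\tM$, to $\prod_{i=1}^{k}(\eta\circ\det_i)$, since $\det$ restricted to the block-diagonal Levi $\tM=\prod_i GL_{n_i}(F)$ is the product of the block determinants; as $M$ is the kernel of $\prod_i\det_i\colon\tM\to F^{\times}$, these restrictions exhaust $(\tM/M)^D$ as $\eta$ runs over $(F^{\times})^D$ (triviality on $M$ forces the components to coincide). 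Thus $\bar{L}(\tsigma)$ and $X(\ii_{\tG\tM}(\tsigma))$ are literally subsets of $(F^{\times})^D$, and it suffices to show they contain the same characters.

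Next I would apply the projection formula $\ii_{\tG\tM}(\tsigma)\otimes(\eta\circ\det)\simeq\ii_{\tG\tM}\big(\tsigma\otimes(\eta\circ\det)|_{\tM}\big)$, which holds because $\eta\circ\det$ is a character of the ambient group $\tG$ and the normalizing factor $\delta_P^{1/2}$ is untouched by such a twist. Writing $\eta$ also for the restricted character $\prod_i(\eta\circ\det_i)$ of $\tM$, this says $\eta\in X(\ii_{\tG\tM}(\tsigma))$ if and only if $\ii_{\tG\tM}(\tsigma)\simeq\ii_{\tG\tM}(\tsigma\otimes\eta)$.

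It then remains to show that $\ii_{\tG\tM}(\tsigma)\simeq\ii_{\tG\tM}(\tsigma\otimes\eta)$ holds exactly when ${}^{w}\tsigma\simeq\tsigma\otimes\eta$ for some $w\in W_M$, i.e. exactly when $\eta\in\bar{L}(\tsigma)$. One direction is formal: parabolic induction depends only on the $W_M$-orbit of the inducing datum, so ${}^{w}\tsigma\simeq\tsigma\otimes\eta$ gives $\ii_{\tG\tM}(\tsigma)\simeq\ii_{\tG\tM}({}^{w}\tsigma)\simeq\ii_{\tG\tM}(\tsigma\otimes\eta)$, where one uses ${}^{w}(\tsigma\otimes\eta)\simeq({}^{w}\tsigma)\otimes({}^{w}\eta)$ and ${}^{w}\eta=\eta$ --- the latter because $\eta=\prod_i(\eta\circ\det_i)$ is invariant under the permutation action of $W_M\subseteq S_k$ on the factors of $\tM$. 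For the converse, $\tsigma$ and $\tsigma\otimes\eta$ are discrete series of $\tM$, so $\ii_{\tG\tM}(\tsigma)$ and $\ii_{\tG\tM}(\tsigma\otimes\eta)$ are irreducible (as recorded after Theorem~\ref{goldberg thm2.4}, via \cite[Theorem 4.2]{bz77}), and a nonzero intertwining map between them forces $\tsigma\otimes\eta$ into the $W_M$-orbit of $\tsigma$ by Frobenius reciprocity and the geometric lemma of \cite{bz77}; equivalently, the $R$-group of $\ii_{\tG\tM}(\tsigma)$ is trivial, so the inducing discrete series is recovered up to $W_M$-conjugacy. Combining the two directions with the previous paragraph gives $\bar{L}(\tsigma)=X(\ii_{\tG\tM}(\tsigma))$.

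The load-bearing ingredient is the standard $GL_n$ fact that $\ii_{\tG\tM}(\tsigma)$ recovers $\tsigma$ up to $W_M$-conjugacy; granting it, the argument is entirely formal. I expect the only real fuss to be notational: keeping consistent the three avatars of ``$\eta$'' (a character of $F^{\times}$, the character $\eta\circ\det$ of $\tG$, and $\prod_i(\eta\circ\det_i)$ of $\tM$), and checking that the $W_M$-action on $\tM$ commutes with these twists exactly as used above.
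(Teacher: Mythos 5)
Your proof is correct and follows essentially the same route as the paper's: both reduce the identity to the projection formula $\ii_{\tG\tM}(\tsigma)\otimes\eta\simeq\ii_{\tG\tM}(\tsigma\otimes\eta)$ together with the fact that two representations of $GL_n(F)$ induced from discrete series are isomorphic exactly when the inducing data are $W_M$-conjugate (the paper cites this as \cite[Theorem 1.2.5 (b)]{ku94}, whereas you rederive it from irreducibility via \cite{bz77} and the geometric lemma). Your extra care with the identifications of $(\tG/G)^D$, $(\tM/M)^D$ and $(F^{\times})^D$, and with the $W_M$-invariance of $\eta$, only makes explicit what the paper leaves implicit.
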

\begin{proof}
Let $\eta \in (\tG/G)^D$ be given. 
We first note that ${\ii}_{\tG \tM}(\tsigma) \otimes \eta  \s {\ii}_{\tG \tM}(\tsigma \otimes \eta ).$
From the Langlands classification \cite[Theorem 1.2.5 (b)]{ku94}, we have 
\[
{\ii}_{\tG \tM}(\tsigma) \s {\ii}_{\tG \tM}(\tsigma \otimes \eta )
\]
if and only if
\[
^w \tsigma \s \tsigma \otimes \eta  ~ \text{for some} ~ w \in W_M.
\]
Now, the proposition follows from the definition of $\bar{L}(\tsigma).$
\end{proof}

\begin{thm} \label{thm1} 
The Knapp-Stein $R$-group $R_{\sigma}$ is isomorphic to 
$S_{\phi}(\widehat{G}) / S_{\phi}(\widehat{M}).$
\end{thm}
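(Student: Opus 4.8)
The plan is to simply chain together the three results already established. By Theorem \ref{goldberg thm2.4} we have $R_{\sigma} \s \bar{L}(\tsigma)/X(\tsigma)$. By Proposition \ref{pro 2 for thm 1} the numerator equals $X({\ii}_{\tG\tM}(\tsigma))$, so $R_{\sigma}\s X({\ii}_{\tG\tM}(\tsigma))/X(\tsigma)$. Finally Proposition \ref{Gelbart-Knapp} provides isomorphisms $X(\tsigma)\s S_{\phi}(\widehat{M})$ and $X({\ii}_{\tG\tM}(\tsigma))\s S_{\phi}(\widehat{G})$, which should identify the quotient with $S_{\phi}(\widehat{G})/S_{\phi}(\widehat{M})$.

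The one point requiring genuine attention is that the two Gelbart-Knapp isomorphisms are \emph{compatible} with the inclusions on both sides: the square
\[
\begin{CD}
X(\tsigma) @>>> X({\ii}_{\tG\tM}(\tsigma)) \\
@VV{\s}V @VV{\s}V \\
S_{\phi}(\widehat{M}) @>>> S_{\phi}(\widehat{G})
\end{CD}
\]
must commute, where the top map is restriction of characters along $\tG/G \twoheadrightarrow$ (via $\det$) and $\tM/M\hookrightarrow$, i.e. the inclusion $X(\tsigma)\subseteq X({\ii}_{\tG\tM}(\tsigma))$ noted after Proposition \ref{Gelbart-Knapp}, and the bottom map is the embedding $\iota: S_{\phi}(\widehat{M})\hookrightarrow S_{\phi}(\widehat{G})$ induced by $\widehat{M}\hookrightarrow\widehat{G}$. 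I would verify this by recalling the construction of the Gelbart-Knapp isomorphism: an element $\eta\in X(\tsigma)$ (viewed as a character of $F^{\times}$, hence via the local Langlands correspondence a character $W_F\to\CC^{\times}$ with $\tsigma\otimes\eta\s\tsigma$) corresponds to the class in $\pi_0(C_{\phi}(\widehat{M}))$ of the element of $\widehat{\tM}$ realizing the intertwining $\widetilde\phi\otimes\eta\cong\widetilde\phi$, pushed to $\widehat M$; the analogous statement for $\widehat G$ uses exactly the same cocycle data, now read inside $\widehat G=PGL_n(\CC)$. Because the ambient parameter $\phi$ and the intertwining element are literally the same object, only the group in which we take $\pi_0$ of the centralizer changes, commutativity of the square is immediate from functoriality of $\pi_0$ with respect to $\widehat{M}\hookrightarrow\widehat{G}$.

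The hard part is therefore not a computation but making the above compatibility precise; once it is in hand, the isomorphism of quotients $X({\ii}_{\tG\tM}(\tsigma))/X(\tsigma)\s S_{\phi}(\widehat{G})/\iota(S_{\phi}(\widehat{M}))=S_{\phi}(\widehat{G})/S_{\phi}(\widehat{M})$ follows formally, and combining with the two displayed isomorphisms above completes the proof. I would also remark that this identification is precisely the assertion that the Knapp-Stein $R$-group agrees with the Endoscopic $R$-group $R_{\phi}$ of \cite[(7.1)]{art89ast} in the $SL_n$ case, which is how the result will be used in the sequel.
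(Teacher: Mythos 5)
Your proposal is correct and follows essentially the same route as the paper: the published proof simply cites Theorem \ref{goldberg thm2.4}, Proposition \ref{pro 2 for thm 1}, and Proposition \ref{Gelbart-Knapp} and declares the result a consequence. Your additional verification that the two Gelbart--Knapp isomorphisms are compatible with the inclusions $X(\tsigma)\subseteq X({\ii}_{\tG\tM}(\tsigma))$ and $\iota: S_{\phi}(\widehat{M})\hookrightarrow S_{\phi}(\widehat{G})$ is a point the paper leaves implicit, and it is handled correctly.
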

\begin{proof}
This is a consequence of Theorem \ref{goldberg thm2.4} and Propositions \ref{Gelbart-Knapp} and  \ref{pro 2 for thm 1} (cf. \cite[Proposition 9.1]{sh83} for another proof).
\end{proof}
In what follows, we give a connection between the Knapp-Stein $R$-group $R_{\sigma}$ and the Arthur $R$-group $R_{\phi, \sigma}$ for the case when $\sigma$ is in $\Pi_{\disc}(M)$ and $\bM$ is an $F$-Levi subgroup of $\bG=SL_n.$
\begin{lm} \label{equality for Arthur R-group SL}
We have the equalities
\[
W_{\phi} = W_{\phi, \sigma} ~ \text{and} ~ W_{\phi}^{\circ} = W_{\phi, \sigma}^{\circ}.
\]
\end{lm}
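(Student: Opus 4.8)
The plan is to establish the single inclusion $W_\phi\subseteq W(\sigma)$. Both asserted identities then follow immediately: $W_{\phi,\sigma}=W_\phi\cap W(\sigma)=W_\phi$, and since $W_\phi^\circ\subseteq W_\phi\subseteq W(\sigma)$ we also get $W_{\phi,\sigma}^\circ=W_\phi^\circ\cap W(\sigma)=W_\phi^\circ$.

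To prove $W_\phi\subseteq W(\sigma)$ I would argue on the dual side. Under the identification of $W_\phi$ with a subgroup of $W_M$ (Arthur, \cite[p.45]{art89ast}), an element $w\in W_\phi$ is exactly one whose conjugation action on $\widehat M$ fixes $\phi$ up to $\widehat M$-conjugacy; writing ${^w}\phi$ for this action, $w\in W_\phi$ means precisely ${^w}\phi\s\phi$. Recall from Section \ref{Construction of $L$-packets} that $\sigma\hookrightarrow\tsigma|_{M}$ for a discrete series $\tsigma\in\Pi_{\disc}(\tM)$ whose $L$-parameter $\widetilde{\phi}$ lifts $\phi$ along the projection $\operatorname{pr}:\widehat{\tM}\twoheadrightarrow\widehat M$ (such $\widetilde{\phi}$ exists by the theorem of Labesse \cite{la85}). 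Since $W_M$ and $W_{\tM}$ are identified and $\operatorname{pr}$ intertwines the two actions, ${^w}\widetilde{\phi}$ is again a lift of ${^w}\phi$; conjugating ${^w}\widetilde{\phi}$ by any lift to $\widehat{\tM}$ of an element of $\widehat M$ carrying ${^w}\phi$ to $\phi$ produces a lift of $\phi$ itself. By Remark \ref{choice of tilde phi}, any two lifts of $\phi$ differ by a twist by a $1$-cocycle of $W_F$ valued in $\widehat{(\tM/M)}\s\CC^\times$; hence ${^w}\widetilde{\phi}$ is $\widehat{\tM}$-conjugate to $\widetilde{\phi}\otimes\chi$ for some character $\chi$ of $W_F$.

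Transporting this back through the local Langlands correspondence for $\tM$ \cite{ht01,he00}: the representation ${^w}\tsigma$ has parameter ${^w}\widetilde{\phi}$ while $\tsigma\otimes\eta$ has parameter $\widetilde{\phi}\otimes\chi$, where $\eta$ is the character of $\tM$ attached to $\chi$; since $\chi$ is valued in $\widehat{(\tM/M)}$, the character $\eta$ is trivial on $M$, i.e.\ $\eta\in(\tM/M)^D$. Therefore ${^w}\tsigma\s\tsigma\otimes\eta$ with $\eta\in(\tM/M)^D$, and Goldberg's Lemma \ref{lemma by goldberg} yields $w\in W(\sigma)$. This gives $W_\phi\subseteq W(\sigma)$ and finishes the argument.

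The only step I expect to require genuine care is the first: making the identification $W_\phi\hookrightarrow W_M$ explicit enough to see that it realizes $W_\phi$ as the stabilizer in $W_M$ of the $\widehat M$-conjugacy class of $\phi$, so that ${^w}\phi\s\phi$ for each $w\in W_\phi$. Once this is in place, the remaining lifting--Langlands--Goldberg chain is entirely formal. I note that, in contrast to several of the surrounding statements, the connectedness of $Z(\widehat M)$ (Remark \ref{conn center}) is not needed for this lemma.
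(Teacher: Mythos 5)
Your proposal is correct and follows essentially the same route as the paper: reduce to the single inclusion $W_{\phi}\subseteq W(\sigma)$, use ${^w}\phi\simeq\phi$ for $w\in W_{\phi}$ to deduce ${^w}\widetilde{\phi}\simeq\widetilde{\phi}\otimes\eta$ with $\eta$ coming from $\widehat{(\tM/M)}$, pass through the local Langlands correspondence for $GL_n$ to get ${^w}\tsigma\simeq\tsigma\otimes\eta$ with $\eta\in(\tM/M)^D$, and conclude by Goldberg's Lemma \ref{lemma by goldberg}. Your intermediate justification of the twist (via the uniqueness of lifts up to a $1$-cocycle in $\widehat{(\tM/M)}$ from Remark \ref{choice of tilde phi}) is in fact slightly more explicit than the paper's one-line assertion of that step.
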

\begin{proof}
The former implies the latter. So, it is enough to show that $W_{\phi} \subset W_{\phi, \sigma}.$ Let $w \in W_{\phi}$ be given. From \cite[Lemma 2.3]{baj04}, we note that the element $w$ lies in $W_{\widehat{M}}$ satisfying $^w \phi \s \phi,$ i.e., $^w \phi$ is conjugate to $\phi$ in $\widehat{M}$ (since $w \in C_{\phi}(\widehat{M})$). It follows that $^w{\tilde{\phi}} \s \tilde{\phi} \otimes \eta$ for some $\eta \in (F^{\times})^D.$ We identify $W_M$ and $W_{\widehat{M}}.$ From the local Langlands correspondence for $GL_n$ and the $W_M$-action on the $L$-parameter $\tilde{\phi} = \tilde{\phi}_1 \oplus \cdots \oplus \tilde{\phi}_k,$ we have
\[
^w{\tsigma} \s \; \tsigma \otimes \eta
\]
for some $\eta \in (\tM/M)^D.$ Now,  from Lemma \ref{lemma by goldberg} we see $w$ must be in $W(\sigma).$ By the definition $W_{\phi, \sigma} = W_{\phi} \cap W(\sigma),$ which completes the proof.
\end{proof}
\noindent Hence, from \cite[(7.1) \& p.44]{art89ast},  we get 
\begin{equation} \label{an equ}
R_{\phi} = R_{\phi, \sigma} \s S_{\phi}(\widehat{G}) / S_{\phi}(\widehat{M}).
\end{equation}
Thus, due to Theorems \ref{goldberg thm2.4} and \ref{thm1}, we have proved the following.
\begin{thm} \label{conc}
Let $\bM$ be an $F$-Levi subgroup of $\bG=SL_n.$ Let $\Pi_{\phi}(M)$ be an $L$-packet on $M$ associated with an elliptic tempered $L$-parameter $\phi : W_F \times SL_2(\CC) \rightarrow \widehat{M}.$ For any $\sigma \in \Pi_{\phi}(M),$ we have
\[
R_{\phi} = R_{\phi, \sigma} \s S_{\phi}(\widehat{G}) / S_{\phi}(\widehat{M}) 
\s \bar{L}(\tsigma) / X(\tsigma) \s R_{\sigma}.
\]
\end{thm}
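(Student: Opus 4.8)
The plan is to assemble Theorem \ref{conc} purely from the chain of identifications already established in this section, so that essentially no new argument is needed — the point is simply to read off that every object in the displayed string is canonically isomorphic to the adjacent one. First I would observe that the rightmost isomorphism $\bar{L}(\tsigma) / X(\tsigma) \s R_{\sigma}$ is exactly the content of Theorem \ref{goldberg thm2.4} (Goldberg), so that end of the chain is done. Next, the isomorphism $S_{\phi}(\widehat{G}) / S_{\phi}(\widehat{M}) \s \bar{L}(\tsigma) / X(\tsigma)$ follows by combining Proposition \ref{Gelbart-Knapp}, which gives $X(\tsigma) \s S_{\phi}(\widehat{M})$ and $X({\ii}_{\tG \tM}(\tsigma)) \s S_{\phi}(\widehat{G})$ compatibly with the inclusion $X(\tsigma) \subseteq X({\ii}_{\tG \tM}(\tsigma))$, together with Proposition \ref{pro 2 for thm 1}, which identifies $X({\ii}_{\tG \tM}(\tsigma))$ with $\bar{L}(\tsigma)$ as subgroups of $(F^{\times})^D$. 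Passing to quotients by the compatible subgroup $X(\tsigma) \s S_{\phi}(\widehat{M})$ yields the stated isomorphism; this is precisely what Theorem \ref{thm1} records, so I would just cite Theorem \ref{thm1} here rather than redo it.

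The remaining link is $R_{\phi} = R_{\phi, \sigma} \s S_{\phi}(\widehat{G}) / S_{\phi}(\widehat{M})$. The equality $R_{\phi} = R_{\phi, \sigma}$ is Lemma \ref{equality for Arthur R-group SL}: since $W_{\phi} = W_{\phi,\sigma}$ and $W_{\phi}^{\circ} = W_{\phi,\sigma}^{\circ}$, the quotients defining $R_{\phi}$ and $R_{\phi,\sigma}$ coincide. The isomorphism $R_{\phi} \s S_{\phi}(\widehat{G}) / S_{\phi}(\widehat{M})$ is the statement of \eqref{an equ}, which is the standard identification of Arthur's $R_{\phi}$ with the Endoscopic $R$-group, citing \cite[(7.1) \& p.44]{art89ast}; for $M$ an $F$-Levi of the simply connected group $SL_n$ the group $Z(\widehat{M})$ is connected (Remark \ref{conn center}), $\widehat{M}$ is a product of general linear groups, so $C_{\phi}(\widehat{M})^{\circ}$ is itself a torus, hence equal to $T_{\phi}$ and $W_{\phi}^{\circ} = 1$; thus $R_{\phi} = W_{\phi}$, and the identification of $W_{\phi}$ with $S_{\phi}(\widehat{G})/S_{\phi}(\widehat{M})$ follows as in \cite{art89ast}. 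Stringing these four isomorphisms together gives
\[
R_{\phi} = R_{\phi, \sigma} \s S_{\phi}(\widehat{G}) / S_{\phi}(\widehat{M})
\s \bar{L}(\tsigma) / X(\tsigma) \s R_{\sigma},
\]
which is the assertion.

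I do not expect a genuine obstacle here: the theorem is a synthesis, and every arrow has been prepared. The only point requiring a little care is checking that all the identifications are mutually \emph{compatible} — in particular that the embedding $\iota : S_{\phi}(\widehat{M}) \hookrightarrow S_{\phi}(\widehat{G})$ coming from $X(\tsigma) \subseteq X({\ii}_{\tG\tM}(\tsigma))$ matches the inclusion used to form the Endoscopic $R$-group in \cite{art89ast}, so that the quotient $S_{\phi}(\widehat{G})/S_{\phi}(\widehat{M})$ is unambiguous — but this compatibility is exactly what Proposition \ref{Gelbart-Knapp} and the Remark following it are set up to supply, and it is also implicitly used already in Theorem \ref{thm1}. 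So the write-up is essentially a one-line citation of Theorems \ref{goldberg thm2.4} and \ref{thm1} together with Lemma \ref{equality for Arthur R-group SL} and \eqref{an equ}.
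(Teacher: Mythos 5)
Your proposal is correct and follows exactly the paper's route: the paper likewise assembles the chain from Theorem \ref{goldberg thm2.4}, Theorem \ref{thm1} (itself built on Propositions \ref{Gelbart-Knapp} and \ref{pro 2 for thm 1}), and equation \eqref{an equ} obtained from Lemma \ref{equality for Arthur R-group SL} together with the citation of \cite{art89ast}. The extra remarks you add on compatibility of the embeddings and on $W_{\phi}^{\circ}$ being trivial are consistent with the text and do not change the argument.
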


\subsection{$R$-groups for $F$-inner Forms of $SL_n$} \label{R-group for SL(r,D)}
Let $\bM'$ be an $F$-Levi subgroup of an inner form $\bG'$ of $\bG=SL_n.$  
Let $\sigma' \in \Pi_{\disc}(M')$ be given. Choose $\tsigma' \in \Pi_{\disc}(\tM')$ such that $\sigma' \hookrightarrow \tsigma'|_{M'}.$ 

As in Section \ref{structure of levi}, we identify $W_M,$ $W_{M'}$ and $W_{\tM'}.$ We also identify $\tM/M$ and $\tM'/M'$ (see Remark \ref{rem for isom between two quotients}). In the same manner as in Section \ref{R-group for SL}, we define
\begin{align*}
\bar{L}(\tsigma') &:= \{ \eta' \in (\tM'/M')^D : \; 
  ^w \tsigma' \s \tsigma' \otimes \eta' ~ \text{for some} ~ w \in W_{M'} \},
\\
X(\tsigma') &:= \{ \eta' \in (\tM'/M')^D : 
   \tsigma' \s \tsigma' \otimes \eta' \},
\\
X({\ii}_{\tG' \tM'}(\tsigma')) &:= \{ \eta' \in (\tG'/G')^D : {\ii}_{\tG'\tM'}(\tsigma') \s {\ii}_{\tG' \tM'}(\tsigma') \otimes \eta' \}.
\end{align*}
Since any character of $GL_m(D)$ is of the form $\eta \circ \Nrd$ for some character of $F^{\times}$ (cf. \cite[Proposition 53.5]{bh06}), we often make no distinct between $\eta$ and $\eta \circ \Nrd.$ Further, since $\tM'$ is of the form $\Pi_{i=1}^{r} GL_{m_i}(D),$ we simply write $\eta$ for $\Pi_{i=1}^{r} (\eta_i \circ \Nrd_i)  \in (\tM'/M')^D,$ where $\eta_i$ denotes a character of $F^{\times}$ and $\Nrd_i$ denotes the reduced norm of $GL_{m_i}(D)$ for each $i.$

\begin{pro} \label{analogue of goldberg lemma}
Let $\sigma' \in \Pi_{\disc}(M')$ be given. Choose $\tsigma' \in \Pi(\tM')$ such that $\sigma' \hookrightarrow \tsigma'|_{M'}.$ Then, we have
\[
W(\sigma') \subseteq \{ w \in W_{M'} : \; 
  ^w \tsigma' \s \tsigma' \otimes \eta' ~ \text{for some} ~ \eta' \in (\tM'/M')^D
\} = W(\sigma).
\]
\end{pro}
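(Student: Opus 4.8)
The plan is to establish the two assertions of the proposition separately: first the inclusion $W(\sigma')\subseteq \{w\in W_{M'}: {}^w\tsigma'\simeq\tsigma'\otimes\eta' \text{ for some }\eta'\in(\tM'/M')^D\}$, and then the equality of this set with $W(\sigma)$. The first inclusion is the analogue of equation \eqref{stabilizer intro2} and should follow by the same argument as in \cite{sh83, go94sl}: if $w\in W(\sigma')$, i.e.\ ${}^w\sigma'\simeq\sigma'$, then picking a lift $\tsigma'\in\Pi_{\disc}(\tM')$ with $\sigma'\hookrightarrow\tsigma'|_{M'}$, one checks that ${}^w\tsigma'$ is again a lift of ${}^w\sigma'\simeq\sigma'$, hence ${}^w\tsigma'$ and $\tsigma'$ have a common constituent upon restriction to $M'$; by Proposition \ref{pro for lifting} (Tadić's Corollary 2.5) applied to the pair $M'\subseteq\tM'$ this forces ${}^w\tsigma'\simeq\tsigma'\otimes\eta'$ for some $\eta'\in(\tM'/M')^D$. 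Note that — unlike in the $GL_n/SL_n$ case — we do not expect the reverse inclusion here, which is precisely why only "$\subseteq$" appears; the failure of multiplicity one for $GL_m(D)\supset SL_m(D)$ \cite{hs11} is what breaks Goldberg's argument for equality.

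The heart of the proposition is the identification of the right-hand set with $W(\sigma)$. By Lemma \ref{lemma by goldberg} (Goldberg's Lemma 2.3), $W(\sigma)=\{w\in W_M: {}^w\tsigma\simeq\tsigma\otimes\eta \text{ for some }\eta\in(\tM/M)^D\}$ for any lift $\tsigma$ of $\sigma$. So what must be shown is the stability statement: under the identification $W_M=W_{M'}$ and $\tM/M\simeq\tM'/M'$ of Remark \ref{rem for isom between two quotients}, the condition "${}^w\tsigma\simeq\tsigma\otimes\eta$ for some $\eta$" holds if and only if "${}^w\tsigma'\simeq\tsigma'\otimes\eta'$ for some $\eta'$", where $\tsigma'=\mathbf{C}(\tsigma)$ is the Jacquet-Langlands transfer. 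The plan is to use that the Weyl-group action on $\tM=\prod_i GL_{n_id}(F)$ permutes the factors (the identification with a subgroup of $S_k$), so ${}^w\tsigma$ is the external tensor product of the $\tsigma_i$ reindexed by $w$, and likewise ${}^w\tsigma'$ on the $GL_{m_i}(D)$ side. Since the Jacquet-Langlands correspondence is multiplicative over such products and, by Remark \ref{transfer characters via JL}, commutes with twisting by characters pulled back from $F^\times$ (i.e.\ $\mathbf{C}(\tsigma_i\otimes(\eta_i\circ\det))=\mathbf{C}(\tsigma_i)\otimes(\eta_i\circ\Nrd)$), the isomorphism ${}^w\tsigma\simeq\tsigma\otimes\eta$ transfers term by term to ${}^w\tsigma'\simeq\tsigma'\otimes\eta'$ and conversely — using that $\mathbf{C}$ is a bijection on essentially square-integrable representations (Proposition \ref{proposition of local JL for essential s i}). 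This gives the claimed equality of sets. This is essentially the content flagged as "Proposition \ref{analogue of goldberg lemma} and its proof" in the introduction.

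The main obstacle I anticipate is bookkeeping rather than conceptual: one must be careful that the identification $W_M=W_{M'}$ is compatible with the identification of the two quotients $\tM/M\simeq F^\times\simeq\tM'/M'$ and with the factorwise action of $w$, so that a character $\eta=\prod_i(\eta_i\circ\det_i)\in(\tM/M)^D$ really does correspond to $\eta'=\prod_i(\eta_i\circ\Nrd_i)\in(\tM'/M')^D$ under transfer, and that $w$ acting on indices does the same thing on both sides. A subtle point is that when $w$ permutes two factors, say $GL_{n_id}(F)$ and $GL_{n_jd}(F)$, one needs $n_id=n_jd$, hence $n_i=n_j$ and $m_i=m_j$, so that the corresponding factors of $\tM'$ match up — this is automatic since $d$ is fixed, but should be stated. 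Once these identifications are pinned down, the transfer argument is formal given Proposition \ref{proposition of local JL for essential s i} and Remark \ref{transfer characters via JL}, and the proof concludes by combining with Lemma \ref{lemma by goldberg}.
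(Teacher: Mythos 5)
Your proposal is correct and follows essentially the same route as the paper: the first inclusion via Tadi\'c's lifting result (Proposition \ref{pro for lifting}) applied to the common constituent ${}^w\sigma'\simeq\sigma'$ of $({}^w\tsigma')|_{M'}$ and $\tsigma'|_{M'}$, and the equality with $W(\sigma)$ by transporting the condition ${}^w\tsigma'\simeq\tsigma'\otimes\eta'$ through the Jacquet--Langlands correspondence using Remarks \ref{rem for isom between two quotients} and \ref{transfer characters via JL} and then invoking Lemma \ref{lemma by goldberg}. The extra bookkeeping you flag (compatibility of the identifications of Weyl groups and of $\tM/M\simeq\tM'/M'$ with the factorwise action of $w$) is left implicit in the paper but is exactly the right point to be careful about.
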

\begin{proof}
Let $w \in W(\sigma')$ be given. Then it turns out that $^w \tsigma'$ and $\tsigma'$ share an irreducible constituent ${^w\sigma} \s \sigma.$ So, Proposition \ref{pro for lifting} implies that $^w \tsigma \s \tsigma \otimes \eta'$ for some character $\eta' \in (\tM'/M')^D.$ This proves the first inclusion. 

For the second inclusion, we note that $^w \tsigma'$ and $\tsigma' \otimes \eta'$ correspond respectively to $^w \tsigma$ and $\tsigma \otimes \eta'$ by the local Jacquet-Langlands correspondence (see Section \ref{local JL}). So we have from Remarks \ref{rem for isom between two quotients} and \ref{transfer characters via JL} 
\[
^w \tsigma' \s \tsigma' \otimes \eta' ~ \text{for some} ~ \eta' \in (\tM'/M')^D
\]
if and only if
\[
^w \tsigma \s \tsigma \otimes \eta ~ \text{for some} ~ \eta \in (\tM/M)^D.
\]
Here we identify $\eta$ and $\eta'$ via the isomorphism $\tM'/M' \s  \tM/M$ in Remark \ref{rem for isom between two quotients}. Applying Lemma \ref{lemma by goldberg}, the proof is complete.
\end{proof}

From now on, we let $\phi : W_F \times SL_2(\CC) \rightarrow \widehat{M}=\widehat{M'}$ be an elliptic tempered $L$-parameter. 
Let $\Pi_{\phi}(M)$ and $\Pi_{\phi}(M')$ be $L$-packets of $M$ and $M'$ associated to $\phi,$ respectively.

\begin{rem}
Let $\sigma \in \Pi_{\phi}(M)$ and $\sigma' \in \Pi_{\phi}(M')$ be given.
Lemma \ref{lemma by goldberg} and Proposition \ref{analogue of goldberg lemma} provide the following diagram.
\[
\xymatrix{
{^w}\tsigma \s \tsigma \otimes \eta 
\ar@2{<->}[d] \ar@2{<->}[r]^{\Tiny{LJL}}
&
{^w}\tsigma' \s \tsigma' \otimes \eta 
\ar@{-->}[d] \\
{^w}\sigma \s \sigma \ar@{<-}[r]^{--->} 
&{^w}\sigma' \s \sigma' \ar@<1ex>[u]  ,
}
\]
where $w$ lies in $W_M=W_{M'}=W_{\tM}=W_{\tM'}$ and $\eta$ is a character on $\tM/M \s \tM'/M'$ (see Remark \ref{rem for isom between two quotients}).
\end{rem}
\begin{pro} \label{pro for delta identity}
For any $\sigma \in \Pi_{\phi}(M)$ and $\sigma' \in \Pi_{\phi}(M'),$ we have
\[
W'_{\sigma} = W'_{\sigma'}
\]
as subgroups of the group $S_r$ of permutations on $r$ letters.
\end{pro}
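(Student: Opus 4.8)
The plan is to reduce the statement to the single assertion that the rank-one Plancherel measures agree: $\mu_\beta(\sigma) = \mu_\beta(\sigma')$ for every $\beta \in \Phi(P,A_M) = \Phi(P',A_{M'})$. Granting this, we have $\Delta'_\sigma = \{\beta : \mu_\beta(\sigma)=0\} = \{\beta : \mu_\beta(\sigma')=0\} = \Delta'_{\sigma'}$, and since $W'_\sigma$ (resp. $W'_{\sigma'}$) is by definition the subgroup of $W_M = W_{M'}$ generated by the reflections $w_\beta$ for $\beta$ in this common set, the two groups coincide. So the entire content is the transfer of Plancherel measures across the generalized Jacquet--Langlands correspondence, which is exactly what the reference \cite{choiy1} provides.

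The key steps, in order: First I would fix $\beta \in \Phi(P,A_M)$ and pass to the corresponding maximal Levi situation, i.e. replace $\bG$ by the Levi subgroup $\bM_\beta$ generated by $\theta \cup \{\beta\}$, so that $\bM$ is a maximal Levi of $\bM_\beta$ and $W'_\sigma$ is built from rank-one computations inside such maximal-Levi subgroups; this is the standard reduction for Plancherel measures and is harmless since inner forms and the JL correspondence are compatible with passing to Levi subgroups. Second, I would recall that for $GL$-type groups the rank-one Plancherel measure $\mu_\beta(\tsigma)$ attached to a discrete series $\tsigma$ of $\tM$, and $\mu_\beta(\tsigma')$ attached to the JL-transfer $\tsigma'$ on $\tM'$, are equal: this is the transfer of Plancherel measures for inner forms of $GL_n$, which follows from the fact that the Plancherel measure is expressible through the local $L$- and $\epsilon$-factors of the relevant Langlands parameters (Shahidi's formula), and $\tsigma$ and $\tsigma'$ share the same parameter $\phi$ via \cite{ht01,he00} and the JL correspondence; alternatively it is stated directly in \cite{choiy1}. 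Third, I would invoke the compatibility of Plancherel measures with restriction: if $\sigma \hookrightarrow \tsigma|_M$, then $\mu_\beta(\sigma) = \mu_\beta(\tsigma)$, because the rank-one intertwining operators for $i_{GM}(\sigma)$ are restrictions of those for $i_{\tG\tM}(\tsigma)$ (the induced representation on the $\tG$ side restricts to a sum of induced representations on the $G$ side, and the $c$-functions match up to constants that cancel in the Plancherel measure). The same holds verbatim on the primed side: $\mu_\beta(\sigma') = \mu_\beta(\tsigma')$. Chaining these equalities gives $\mu_\beta(\sigma) = \mu_\beta(\tsigma) = \mu_\beta(\tsigma') = \mu_\beta(\sigma')$.

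Concretely the proof would read: \emph{Let $\beta \in \Phi(P,A_M)$. By \cite{choiy1} (transfer of Plancherel measures via the generalized Jacquet--Langlands correspondence), $\mu_\beta(\sigma) = \mu_\beta(\sigma')$; indeed both equal $\mu_\beta(\tsigma) = \mu_\beta(\tsigma')$, the latter equality because $\tsigma$ and $\tsigma'$ have the same Langlands parameter and the Plancherel measure of a discrete series of an inner form of a product of $GL$'s depends only on that parameter, and the former equalities because Plancherel measures are preserved under the restriction maps $\tsigma \rightsquigarrow \sigma$, $\tsigma' \rightsquigarrow \sigma'$. Hence $\Delta'_\sigma = \{\beta : \mu_\beta(\sigma)=0\} = \{\beta : \mu_\beta(\sigma')=0\} = \Delta'_{\sigma'}$. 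Since $W'_\sigma$ and $W'_{\sigma'}$ are, by definition, the subgroups of $W_M = W_{M'}$ generated by the reflections in the roots of $\Delta'_\sigma = \Delta'_{\sigma'}$, we conclude $W'_\sigma = W'_{\sigma'}$.}

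The main obstacle is not in the group-theoretic bookkeeping — that part is immediate once $\Delta'_\sigma = \Delta'_{\sigma'}$ — but in correctly citing and applying the Plancherel transfer of \cite{choiy1}: one must make sure the statement there is phrased for (or trivially extends to) the restricted groups $M, M'$ sitting between the products of $SL$'s and the products of $GL$'s, and that the rank-one Plancherel measure is genuinely insensitive to the choice of constituent $\sigma$ of $\tsigma|_M$. I expect this to be handled by the remark that $\mu_\beta$ depends only on $i_{GM}(\sigma)$ as a representation (equivalently on the $c$-function of the rank-one parabolic), together with the earlier observation that a constituent of $\tsigma|_M$ is a discrete series of $M$ by Remark~\ref{remark for sc to sc}, so that $\mu_\beta(\sigma)$ makes sense and equals $\mu_\beta(\tsigma)$ up to the identification of reduced root systems $\Phi(P,A_M) = \Phi(P',A_{M'})$ recorded in Section~\ref{structure of levi}.
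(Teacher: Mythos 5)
Your proposal is correct and follows essentially the same route as the paper: reduce everything to the single identity $\mu_\beta(\sigma)=\mu_\beta(\sigma')$ for each reduced root $\beta$, which forces $\Delta'_\sigma=\Delta'_{\sigma'}$ and hence $W'_\sigma=W'_{\sigma'}$; the intermediate chain $\mu_\beta(\sigma)=\mu_\beta(\tsigma)=\mu_\beta(\tsigma')=\mu_\beta(\sigma')$ that you sketch is precisely the content of the transfer theorem the paper invokes. The only small correction is bibliographic: the paper's proof rests on the discrete-series transfer result of \cite{choiy2} (Theorem 6.3 there) together with the invariance of Plancherel measures on $L$-packets from \cite[Corollary 7.2]{sh89}, rather than on the supercuspidal paper \cite{choiy1}.
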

\begin{proof}
We note that the sets $\Phi(P, A_M)$ and $\Phi(P', A_{M'})$ of reduced roots are identical from \cite[p.85]{go94sl}. Let $\beta \in \Phi(P, A_M) = \Phi(P', A_{M'})$ be given. Since the Plancherel measures are invariant on $\Pi_{\phi}(M')$ due to \cite[Corollary 7.2]{sh89}, we have from \cite[Theorem 6.3]{choiy2}

\begin{equation} \label{equ pm identity}
\mu_\beta(\sigma) = \mu_{\beta}(\sigma').
\end{equation}
In particular, $\mu_\beta(\sigma)=0$ if and only if $\mu_{\beta}(\sigma')=0.$  From the definitions of $W'_\sigma$ and $W'_{\sigma'}$ (see Section \ref{section for def of R}), the lemma follows.
\end{proof}
\begin{rem} \label{rem for pm identity}
Equation \eqref{equ pm identity} can be also deduced from \cite[Theorem 5.7]{choiy2} since our discrete series representations in $\Pi_{\phi}(M)$ and $\Pi_{\phi}(M')$ satisfy the character identity
\[
\sum_{\sigma \in \Pi_{\phi}(M)} \Theta_{\sigma}(\gamma) = (-1)^{n-m} \sum_{\sigma' \in \Pi_{\phi}(M')} \bold m(\sigma') \Theta_{\sigma'}(\gamma')
\]
for any $\gamma$ and $\gamma'$ have matching conjugacy classes. Here $\bold m(\sigma')$ is the multiplicity of $\sigma'$ in $\tsigma'|_{M'}$ and $\Theta_{\natural}$ is the character function (Harish-Chandra character) of $\natural.$ This character identity comes from the restriction of the character identity in the local Jacquet-Langlands correspondence between $\tM$ and $\tM'$ (Proposition \ref{proposition of local JL for essential s i}). We also notice that the multiplicity $\bold{m(\sigma)} = 1$ \cite[Theorem 1.2]{tad92}. Further, we refer the reader to \cite[(2.5) on p.88]{ac89} and \cite[Corollary 7.2]{sh89} for another approach.
\end{rem}

Now we state a relation between $R$-groups for $G=SL_n(F)$ and $G'=SL_m(D)$ as follows.
\begin{thm} \label{analogue of goldberg thm2.4}
Let $\sigma \in \Pi_{\phi}(M)$ and $\sigma' \in \Pi_{\phi}(M')$ be given. For any liftings $\tsigma \in \Pi_{\disc}(\tM)$ and $\tsigma' \in \Pi_{\disc}(\tM')$ such that $\sigma \hookrightarrow \tsigma |_{M'}$ and $\sigma' \hookrightarrow \tsigma' |_{M'},$ we have 
\[
R_{\sigma'} \hookrightarrow \bar{L}(\tsigma') / X(\tsigma') \s R_\sigma \s R_{\phi, \sigma} =  R_{\phi}.
\]
\end{thm}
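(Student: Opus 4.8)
The plan is to prove the right‑hand isomorphisms first and then to exhibit $R_{\sigma'}$ as an actual subgroup of $R_\sigma$ inside the common Weyl group $W_M=W_{M'}$. First I would record that, under the identifications $\tM/M\simeq F^\times\simeq\tM'/M'$ of Remark \ref{rem for isom between two quotients}, we have
\[
X(\tsigma')=X(\tsigma)\quad\text{and}\quad\bar L(\tsigma')=\bar L(\tsigma)
\]
as subgroups of $(F^\times)^D$. Both follow from the local Jacquet--Langlands correspondence: by Remark \ref{transfer characters via JL} the correspondence $\tsigma\leftrightarrow\tsigma'$ is compatible with twisting by characters pulled back from $F^\times$, and since $\tM$ (resp.\ $\tM'$) is a product of $GL_{n_i}(F)$ (resp.\ $GL_{m_i}(D)$) blocks permuted by $W_M=W_{M'}$, the correspondence also intertwines the Weyl‑group actions. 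Hence ${}^w\tsigma'\simeq\tsigma'\otimes\eta$ if and only if ${}^w\tsigma\simeq\tsigma\otimes\eta$; taking $w$ over all of $W_M$ gives $\bar L(\tsigma')=\bar L(\tsigma)$ (this is exactly the equivalence established in the proof of Proposition \ref{analogue of goldberg lemma}), and taking $w=1$ gives $X(\tsigma')=X(\tsigma)$ (compare the equality $Y(\tsigma')=Y(\tsigma)$ in the proof of Proposition \ref{size of L-packets}). Consequently $\bar L(\tsigma')/X(\tsigma')=\bar L(\tsigma)/X(\tsigma)$, which is $\simeq R_\sigma$ by Theorem \ref{goldberg thm2.4} and $\simeq R_{\phi,\sigma}=R_\phi$ by Theorem \ref{conc}.

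For the embedding $R_{\sigma'}\hookrightarrow\bar L(\tsigma')/X(\tsigma')$ I would work directly with subgroups of $W_M=W_{M'}$. Proposition \ref{analogue of goldberg lemma} gives $W(\sigma')\subseteq W(\sigma)$, and the Plancherel‑measure identity \eqref{equ pm identity} from the proof of Proposition \ref{pro for delta identity} gives $\mu_\beta(\sigma)=0$ if and only if $\mu_\beta(\sigma')=0$ for every $\beta\in\Phi(P,A_M)=\Phi(P',A_{M'})$, i.e.\ $\Delta'_{\sigma'}=\Delta'_\sigma$. Hence if $w\in R_{\sigma'}$ then $w\in W(\sigma')\subseteq W(\sigma)$ and $w\beta>0$ for all $\beta\in\Delta'_{\sigma'}=\Delta'_\sigma$, so $w\in R_\sigma$; that is, $R_{\sigma'}\subseteq R_\sigma$. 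Composing this inclusion with the isomorphism $R_\sigma\xrightarrow{\sim}\bar L(\tsigma)/X(\tsigma)=\bar L(\tsigma')/X(\tsigma')$ of the previous step yields the asserted embedding. I would also note this agrees with the naive map $w\mapsto\eta'_w\bmod X(\tsigma')$, where ${}^w\tsigma'\simeq\tsigma'\otimes\eta'_w$ (such $\eta'_w$ exists by Proposition \ref{analogue of goldberg lemma} and is unique modulo $X(\tsigma')$): on $R_\sigma$ Goldberg's isomorphism of Theorem \ref{goldberg thm2.4} is precisely $w\mapsto\eta_w$, and $\eta_w$ is identified with $\eta'_w$ under $\tM/M\simeq\tM'/M'$.

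The argument is short once the bookkeeping is in place, and the real work is in the first step: confirming that the block‑wise Jacquet--Langlands correspondence is equivariant for the $W_M=W_{M'}$‑action and compatible, under twisting, with the identification $\tM/M\simeq F^\times\simeq\tM'/M'$, so that $\bar L$ and $X$ literally coincide on the two sides. This is essentially already packaged in Remark \ref{transfer characters via JL} and the proofs of Propositions \ref{size of L-packets} and \ref{analogue of goldberg lemma}; the point is to assemble these consistently, in particular to check that the identifications of the Weyl groups, of the root systems $\Phi(P,A_M)=\Phi(P',A_{M'})$, and of $\tM/M$ with $\tM'/M'$ used in each place are the same, so that "$R_{\sigma'}\subseteq R_\sigma$" fits together with the isomorphism chain. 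I would not expect any genuinely new difficulty beyond this; in particular, unlike the split case, no equality is claimed, which is consistent with the possibility (Example 6.3.4 of \cite{chaoli}) that the containment $R_{\sigma'}\subseteq R_\sigma$ is proper.
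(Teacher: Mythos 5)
Your proposal is correct and follows essentially the same route as the paper: the containment $R_{\sigma'}\subseteq R_\sigma$ from Propositions \ref{analogue of goldberg lemma} and \ref{pro for delta identity}, the identities $\bar L(\tsigma')=\bar L(\tsigma)$ and $X(\tsigma')=X(\tsigma)$ via the Jacquet--Langlands correspondence's compatibility with character twists, and then Theorems \ref{goldberg thm2.4} and \ref{conc} for the isomorphism chain. Your version merely spells out more explicitly the step $\Delta'_{\sigma'}=\Delta'_\sigma$ and the description of the embedding as $w\mapsto\eta'_w$, which the paper leaves implicit.
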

\begin{proof}
From Propositions \ref{analogue of goldberg lemma} and \ref{pro for delta identity}, it follows that $R_{\sigma'} \subseteq  R_{\sigma}.$ 
Since $^w \tsigma'$ and $\tsigma' \otimes \eta$ correspond respectively to $^w \tsigma$ and $\tsigma \otimes \eta$ by the local Jacquet-Langlands correspondence (see Section \ref{local JL}), we have
\[
\bar{L}(\tsigma) = \bar{L}(\tsigma') ~ \text{and} ~ X(\tsigma) = X(\tsigma').
\]
Thus, the isomorphism $R_{\sigma} \simeq \bar{L}(\tsigma') / X(\tsigma')$ follows from Theorem \ref{goldberg thm2.4}. Hence, by Theorem \ref{conc}, we have the claim.
\end{proof}
\begin{cor}
Let $\sigma \in \Pi_{\phi}(M)$ and $\sigma' \in \Pi_{\phi}(M')$ be given. If $\ii_{GM}(\sigma)$ is irreducible, then $\ii_{G'M'}(\sigma')$ is irreducible.
\end{cor}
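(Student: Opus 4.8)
The plan is to deduce the corollary directly from the machinery already assembled, since irreducibility of $\ii_{GM}(\sigma)$ and $\ii_{G'M'}(\sigma')$ are both governed by their respective Knapp-Stein $R$-groups. Recall that for a discrete series $\sigma \in \Pi_{\disc}(M)$ one has $\ii_{GM}(\sigma)$ irreducible if and only if $R_\sigma$ is trivial; this is immediate from Theorem \ref{thm for Knapp-Stein-Sil}, since $C(\sigma) \simeq \CC[R_\sigma]_\eta$, and a twisted group algebra of the trivial group is just $\CC$, while a twisted group algebra of a nontrivial group has dimension $|R_\sigma| > 1$, forcing $\ii_{GM}(\sigma)$ to be reducible. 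The same statement holds verbatim for $\sigma'$ on the inner form, with $R_{\sigma'}$ in place of $R_\sigma$.

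With this reformulation, the corollary becomes: if $R_\sigma = 1$ then $R_{\sigma'} = 1$. But this is an immediate consequence of Theorem \ref{analogue of goldberg thm2.4}, which gives an injection $R_{\sigma'} \hookrightarrow R_\sigma$. So first I would invoke the Knapp-Stein-Silberger structure theorem to translate the irreducibility hypothesis on $\ii_{GM}(\sigma)$ into the assertion $R_\sigma = 1$; second, I would apply the containment $R_{\sigma'} \subseteq R_\sigma$ from Theorem \ref{analogue of goldberg thm2.4} to conclude $R_{\sigma'} = 1$; third, I would run the structure theorem in the reverse direction on $M'$ to conclude that $C(\sigma') = \End_{G'}(\ii_{G'M'}(\sigma')) \simeq \CC$, hence $\ii_{G'M'}(\sigma')$ is irreducible.

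There is essentially no obstacle here, as the work has all been done in Theorem \ref{analogue of goldberg thm2.4}; the only point requiring a word of care is the equivalence ``$\ii_{GM}(\sigma)$ irreducible $\iff$ $R_\sigma = 1$'', which should be stated cleanly using that $\ii_{GM}(\sigma)$ is unitary (so semisimple) and hence its number of irreducible constituents, counted with multiplicity, equals $\dim_\CC C(\sigma) = |R_\sigma|$. I would phrase the proof in one or two sentences: \emph{By Theorem \ref{thm for Knapp-Stein-Sil}, $\ii_{GM}(\sigma)$ is irreducible precisely when $R_\sigma$ is trivial, and similarly for $\ii_{G'M'}(\sigma')$ and $R_{\sigma'}$; since $R_{\sigma'} \hookrightarrow R_\sigma$ by Theorem \ref{analogue of goldberg thm2.4}, triviality of $R_\sigma$ forces triviality of $R_{\sigma'}$, whence the claim.}
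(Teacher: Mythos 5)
Your argument is correct and is exactly the one the paper intends: the corollary is stated without proof immediately after Theorem \ref{analogue of goldberg thm2.4}, precisely because the containment $R_{\sigma'}\subseteq R_{\sigma}$ together with the standard equivalence ``$\ii_{GM}(\sigma)$ irreducible $\iff$ $R_{\sigma}=1$'' (from the Knapp--Stein--Silberger structure of the commuting algebra) yields it in one line. Your added remark on semisimplicity justifying that equivalence is a correct and harmless elaboration.
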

The following theorem asserts that Knapp-Stein $R$-groups for $G'$ are invariant on $L$-packets (cf. \cite[Corollary 2.5]{go94sl} for $G$). 
\begin{thm} \label{invariant theorem}
For any $\sigma'_1,$ $\sigma'_2$ $\in \Pi_{\phi}(M'),$ we have
\[
R_{\sigma'_1} \s R_{\sigma'_2}.
\]
\end{thm}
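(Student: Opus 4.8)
The plan is to reduce the statement to facts already proven for the split group. By Theorem~\ref{analogue of goldberg thm2.4} we have, for any $\sigma' \in \Pi_\phi(M')$ with lift $\tsigma' \in \Pi_{\disc}(\tM')$ satisfying $\sigma' \hookrightarrow \tsigma'|_{M'}$, the isomorphism $R_{\sigma'} \hookrightarrow \bar L(\tsigma')/X(\tsigma')$. So it suffices to show that the group $\bar L(\tsigma')/X(\tsigma')$ — together with the subgroup $R_{\sigma'}$ sitting inside it — does not depend, up to isomorphism, on which member $\sigma'$ of the $L$-packet we choose. Since $W'_{\sigma'} = W'_\sigma$ is independent of the choice of $\sigma'$ by Proposition~\ref{pro for delta identity} (the Plancherel measures $\mu_\beta$ depend only on $\phi$), the only thing that can vary is the stabilizer $W(\sigma')$, and hence $R_{\sigma'} = \{w \in W(\sigma') : w\beta > 0 \ \forall \beta \in \Delta'_\sigma\}$.

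First I would fix two lifts $\tsigma'_1, \tsigma'_2 \in \Pi_{\disc}(\tM')$ with $\sigma'_i \hookrightarrow \tsigma'_i|_{M'}$. By Proposition~\ref{simply transitive on the set} (applied to $M'$), the group $\tM'$ acts transitively by conjugation on $\Pi_{\tsigma'}(M') = \Pi_\phi(M')$, so there is some $\tilde g \in \tM'$ with ${}^{\tilde g}\sigma'_1 \simeq \sigma'_2$. Conjugation by $\tilde g$ is an inner automorphism of $\tM'$, so it normalizes $\tM'/M'$ trivially (it acts trivially on the abelian quotient), and ${}^{\tilde g}\tsigma'_1 \simeq \tsigma'_1$ as representations of $\tM'$. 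Since conjugation by $\tilde g$ commutes with the $W_{M'}$-action up to the (inner, hence trivial on the relevant quotients) adjustment, one checks that $w \in W(\sigma'_1)$ if and only if $w \in W(\sigma'_2)$: indeed ${}^w\sigma'_2 \simeq {}^w({}^{\tilde g}\sigma'_1) \simeq {}^{\tilde g}({}^w\sigma'_1)$, and ${}^{\tilde g}(-)$ is a bijection on $\Pi_\phi(M')$ fixing isomorphism classes appropriately, so ${}^w\sigma'_2 \simeq \sigma'_2 \iff {}^w\sigma'_1 \simeq \sigma'_1$. Hence $W(\sigma'_1) = W(\sigma'_2)$, and combined with $W'_{\sigma'_1} = W'_{\sigma'_2}$ this gives $R_{\sigma'_1} = R_{\sigma'_2}$ as subgroups of $S_r$, which is even stronger than $R_{\sigma'_1} \simeq R_{\sigma'_2}$.

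The main obstacle I anticipate is the bookkeeping in the previous paragraph: the $W_{M'}$-action and the $\tM'$-conjugation action must be carefully shown to commute, because $W_{M'}$ acts through representatives in $N_{G'}(A_{M'})$ (or $N_{\tG'}(\tA_{M'})$) which need not centralize $\tilde g$. The clean way around this is to observe that the condition "${}^w\sigma' \simeq \sigma'$" is equivalent, via Lemma~\ref{lemma by goldberg}/Proposition~\ref{analogue of goldberg lemma}, to "${}^w\tsigma' \simeq \tsigma' \otimes \eta$ for some $\eta \in (\tM'/M')^D$" — and this latter condition is manifestly about the $\tM'$-representation $\tsigma'$, which is \emph{independent} of the choice of lift among $\tsigma'_1, \tsigma'_2$ only up to twist by a character trivial on $M'$. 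Since twisting $\tsigma'$ by $\eta_0 \in (\tM'/M')^D$ merely conjugates the set $\{\eta : {}^w\tsigma' \simeq \tsigma' \otimes \eta\}$ by ${}^{w}\eta_0 \cdot \eta_0^{-1}$ and does not change whether such an $\eta$ exists, the set of admissible $w$ is unchanged. This shows $W(\sigma'_1) = W(\sigma'_2)$ directly without needing to track the $\tM'$-conjugation, and then $R_{\sigma'_1} = R_{\sigma'_2}$ follows exactly as above. I would present the argument this second way, as it is the least error-prone.
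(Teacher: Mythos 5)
Your second argument --- the one you say you would actually present --- rests on a false equivalence. You assert that ``${}^w\sigma'\simeq\sigma'$'' is equivalent, via Lemma \ref{lemma by goldberg} and Proposition \ref{analogue of goldberg lemma}, to ``${}^w\tsigma'\simeq\tsigma'\otimes\eta$ for some $\eta\in(\tM'/M')^D$.'' But Lemma \ref{lemma by goldberg} is a statement about the split group only, and Proposition \ref{analogue of goldberg lemma} gives only the one-way containment
\[
W(\sigma')\subseteq\{w\in W_{M'}: {}^w\tsigma'\simeq\tsigma'\otimes\eta'\ \text{for some}\ \eta'\in(\tM'/M')^D\}=W(\sigma),
\]
and the paper stresses that this containment can be proper: the quotient is exactly $W^*(\sigma')=W(\sigma)/W(\sigma')$ of \eqref{W*}, which is nontrivial in example 6.3.4 of \cite{chaoli}. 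The converse implication fails precisely because multiplicity one fails for restriction from $GL_m(D)$ to $SL_m(D)$ \cite{hs11}, which is the central subtlety of the whole paper. Consequently your character-twisting computation only shows that the \emph{larger} set $W(\sigma)$ is independent of the choice of member of the packet --- which is not in question, since it is the stabilizer for the split form --- and says nothing about whether $W(\sigma'_1)=W(\sigma'_2)$.

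Your first argument is the right one, and is essentially the paper's proof, but you explicitly decline to do the verification it requires; that verification is short and you should carry it out rather than retreat to the flawed route. Choose $x\in\tM'$ with ${}^{x}\sigma'_1\simeq\sigma'_2$ (Lemma \ref{simply transitive}). For a representative $w'\in N_{G'}(M')$ of an element of $W(\sigma'_1)$, the element $xw'x^{-1}$ again lies in $N_{\tG'}(\bM')\cap G'=N_{G'}(M')$ because $G'$ is normal in $\tG'$ and $x$ normalizes $M'$, and
\[
{}^{xw'x^{-1}}\sigma'_2\simeq {}^{xw'x^{-1}}\bigl({}^{x}\sigma'_1\bigr)={}^{x}\bigl({}^{w'}\sigma'_1\bigr)\simeq {}^{x}\sigma'_1\simeq\sigma'_2,
\]
so $w'\mapsto xw'x^{-1}$ is an injective homomorphism $W(\sigma'_1)\to W(\sigma'_2)$ with inverse $w'\mapsto x^{-1}w'x$; no commutation of $w'$ with $x$ is ever needed. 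Combined with $W'_{\sigma'_1}=W'_{\sigma'_2}$ from Proposition \ref{pro for delta identity}, this gives $R_{\sigma'_1}\simeq R_{\sigma'_2}$. (If you want the literal equality of stabilizers you were aiming for, note that $xw'x^{-1}(w')^{-1}\in\tM'\cap G'=M'$, so conjugation by $x$ induces the identity on $W_{M'}$; it is this observation, not the twisting argument, that justifies it.)
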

\begin{proof}
By fixing $\sigma \in \Pi_{\phi}(M),$ Lemma \ref{pro for delta identity} shows  $W'_{\sigma'_1} = W'_{\sigma'_2}.$ It then suffices to show that $W(\sigma'_1) \s W(\sigma'_2).$
From Lemma \ref{simply transitive}, choose $x \in \widetilde M'$ such that ${^x\sigma_1'} \s \sigma_2'.$ We define a map
\[
w' \mapsto xw'x^{-1}
\]
from $W(\sigma_1')$ to $W(\sigma_2').$ Since $M'$ is a normal subgroup of $\tM',$ the element $xw'x^{-1}$ must be in $N_{\tG'}(\bM')\cap G'=N_{G'}(M'),$ and $W(\sigma_2').$ Note that this map is an injective homomorphism. In the same manner, we obtain another injective homomorphism
\[
w' \mapsto x^{-1}w'x
\]
from $W(\sigma_2')$ to $W(\sigma_1').$ It is clear that one map is the inverse of the other. Thus, the proof is complete. 
\end{proof}
\begin{cor}
Let $\sigma'_1,$ $\sigma'_2$ be given $\in \Pi_{\phi}(M').$ Then  $\ii_{G'M'}(\sigma'_1)$ is irreducible if and only if $\ii_{G'M'}(\sigma'_2)$ is irreducible.
\end{cor}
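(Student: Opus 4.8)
The plan is to reduce this to Theorem~\ref{invariant theorem} together with the standard correspondence between irreducibility of a unitarily induced representation and triviality of its $R$-group. First, recall from Theorem~\ref{thm for Knapp-Stein-Sil} that for any $\sigma' \in \Pi_{\disc}(M')$ the commuting algebra $C(\sigma') = \End_{G'}\bigl(\ii_{G'M'}(\sigma')\bigr)$ is isomorphic to the twisted group algebra $\CC[R_{\sigma'}]_{\eta}$; since a twisted group algebra of a finite group has $\CC$-dimension equal to the order of the group, independently of the $2$-cocycle $\eta$, we get $\dim_{\CC} C(\sigma') = |R_{\sigma'}|$. Because $\sigma'$ is a discrete series representation, $\ii_{G'M'}(\sigma')$ is a unitary admissible representation of finite length, hence completely reducible, and so it is irreducible if and only if $C(\sigma')$ is one-dimensional, that is, if and only if $R_{\sigma'} = 1$.

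With this in hand the corollary is immediate from Theorem~\ref{invariant theorem}: for $\sigma'_1, \sigma'_2 \in \Pi_{\phi}(M')$ one has $R_{\sigma'_1} \simeq R_{\sigma'_2}$, so $R_{\sigma'_1}$ is trivial exactly when $R_{\sigma'_2}$ is, and therefore $\ii_{G'M'}(\sigma'_1)$ is irreducible exactly when $\ii_{G'M'}(\sigma'_2)$ is.

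There is no real obstacle here; the entire content sits in Theorem~\ref{invariant theorem}, and the only point requiring a moment's care is the observation that it is the \emph{order} of the $R$-group, not its (twisted) representation theory, that detects irreducibility, so that the abstract isomorphism $R_{\sigma'_1}\simeq R_{\sigma'_2}$ suffices even though the associated $2$-cocycles need not match. If one preferred to avoid invoking Theorem~\ref{invariant theorem}, one could instead use directly the conjugation map $w'\mapsto xw'x^{-1}$ by an element $x\in\tM'$ with ${}^{x}\sigma'_1\simeq\sigma'_2$, which (as in the proof of Theorem~\ref{invariant theorem}) carries $W(\sigma'_1)$ onto $W(\sigma'_2)$ and $W'_{\sigma'_1}$ onto $W'_{\sigma'_2}$, hence $R_{\sigma'_1}$ onto $R_{\sigma'_2}$, and then apply the reducibility criterion above; but routing through Theorem~\ref{invariant theorem} is the cleaner path.
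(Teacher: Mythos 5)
Your argument is correct and is exactly the intended deduction: the paper states this corollary immediately after Theorem \ref{invariant theorem} without proof, the implicit reasoning being precisely that irreducibility of $\ii_{G'M'}(\sigma')$ is equivalent to $R_{\sigma'}=1$ via the Knapp--Stein--Silberger isomorphism $C(\sigma')\simeq\CC[R_{\sigma'}]_{\eta}$, so the isomorphism $R_{\sigma'_1}\simeq R_{\sigma'_2}$ settles the matter. Your added remark that only the order of the $R$-group (not the cocycle) is needed to detect irreducibility is a correct and worthwhile clarification.
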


Now we describe the difference between $R_\sigma$ and $R_{\sigma'}$ (cf. Appendix \ref{behavior of cha act} for another interpretation). We define a finite quotient
\begin{equation} \label{W*}
W^*(\sigma') := W(\sigma) / W(\sigma')
\end{equation}
from Proposition \ref{analogue of goldberg lemma}.
So, due to Proposition \ref{pro for delta identity} we have
\begin{equation} \label{diff in terms of W*}
1 \rightarrow R_{\sigma'} \rightarrow R_{\sigma} \rightarrow W^*(\sigma') \rightarrow 1,
\end{equation}
which implies $W^*(\sigma') \s R_{\sigma}/R_{\sigma'}.$
Given any $\sigma'_1,$ $\sigma'_2$ $\in \Pi_{\phi}(M'),$ we have from Theorems \ref{goldberg thm2.4} and \ref{invariant theorem}
\begin{equation}  \label{isomorphism of W*}
W^*(\sigma_1') \s W^*(\sigma_2').
\end{equation}
\begin{rem}  \label{meaning for W*}
We note that $W^*(\sigma')$ can be realized as the set
$\{ {^w\sigma'} : w \in W(\sigma) \}.$
Further, given $\sigma_1'$ and $\sigma_2'$ in $W^*(\sigma'),$ we have $JH(\sigma_1')=JH(\sigma_2').$ Here $JH(\sigma_i')$ is the set of all irreducible constituents in $\ii_{G'M'}(\sigma_i')$ for $i=1, 2.$ 
\end{rem}

\begin{rem}  We note that $W^*(\sigma')$ can be non-trivial, in which case $R_{\sigma'}\subsetneq R_\sigma,$ as is discussed in example 6.3.4 of \cite{chaoli}.
\end{rem}

\section{Multiplicities on $F$-inner forms of $SL_n$ } \label{multi}
In this section we discuss several possible 
multiplicities which occur in the restriction and the parabolic induction of an $F$-inner form $\bG'$ of $\bG=SL_n.$ We express these in terms of  cardinalities of $L$-packets the difference between $R$-groups of $SL_n$ and its $F$-inner form.
We continue with the notation in Section \ref{R-group for SL(r,D)}. 

Throughout Section \ref{multi}, we let  $\phi : W_F \times SL_2(\CC) \rightarrow \widehat{M} = \widehat{M'}$ be an elliptic tempered $L$-parameter. Let $\Pi_{\phi}(M)$ and $\Pi_{\phi}(M')$ be $L$-packets of $M$ and $M',$ respectively, associated to $\phi.$ Consider $\phi$ as an $L$-parameter of $G$ and $G'$ through a natural embedding $\widehat{M} \hookrightarrow \widehat{G} =\widehat{G'}.$   We denote by $\Pi_{\phi}(G)$ and $\Pi_{\phi}(G')$ $L$-packets of $G$ and $G',$ respectively, associated to $\phi$ (see Section \ref{Construction of $L$-packets}).

Let $\sigma \in \Pi_{\phi}(M)$ and $\sigma' \in \Pi_{\phi}(M')$ be given. Choose $\tsigma \in \Pi_{\disc}(\tM)$ and $\tsigma' \in \Pi_{\disc}(\tM')$ such that $\sigma \hookrightarrow \tsigma |_{M'}$ and $\sigma' \hookrightarrow \tsigma' |_{M'}.$ We consider the following isomorphism 
\begin{equation} \label{iso bw res and ind}
(\ii_{\tG' \tM'}(\tsigma'))|_{G'} \s \ii_{G' M'} (\tsigma'|_{M'})
\end{equation}
as $G'$-modules, since the restriction and the parabolic induction are compatible (cf. \cite[Proposition 4.1]{baj04}). Fix an irreducible constituent $\pi'$ of $\ii_{G' M'} (\sigma').$ We use the following notation.
\begin{itemize}
  \item  $\bold m(\pi')$ denotes the multiplicity of $\pi'$ in $\ii_{G' M'} (\sigma').$
  
  \item  $\bold m_{M',G'}((\ii_{\tG' \tM'}(\tsigma'))|_{G'})$ denotes the multiplicity of $\pi'$ in the restriction $(\ii_{\tG' \tM'}(\tsigma'))|_{G'}.$ 

  \item  $\bold m_{M',G'}(\tsigma'|_{M'})$ denotes the multiplicity of $\pi'$ in  $\ii_{G' M'} (\tsigma'|_{M'}).$
  
\end{itemize}
In what follows, we present these multiplicities in terms of $|W^*(\sigma')|$, 
$|\Pi_{\phi}(M)|,$ $|\Pi_{\phi}(M')|,$ $|\Pi_{\phi}(G)|$ and $|\Pi_{\phi}(G')|.$
\begin{lm} \label{multi lemma I}
We have 
\[
\bold m_{M',G'}((\ii_{\tG' \tM'}(\tsigma'))|_{G'})
= 
\sqrt{\frac{|\Pi_{\phi}(G)|}{|\Pi_{\phi}(G')|}}.
\]
\end{lm}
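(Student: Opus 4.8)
The plan is to compute both sides by relating the multiplicity in the restriction $(\ii_{\tG'\tM'}(\tsigma'))|_{G'}$ to the degree of the restriction and to the cardinalities of the $L$-packets on $G$ and $G'$. First I would invoke the isomorphism \eqref{iso bw res and ind}, so that $\bold m_{M',G'}((\ii_{\tG'\tM'}(\tsigma'))|_{G'})$ is the multiplicity of the fixed irreducible constituent $\pi'$ in $\ii_{G'M'}(\tsigma'|_{M'})$. Since $\ii_{\tG'\tM'}(\tsigma')$ is irreducible (as $\tsigma'$ is discrete series, by \cite[Theorem 4.2]{bz77}), restriction from $\tG'$ to $G'$ behaves exactly as in Section \ref{section for Tadic results}: the constituents of $(\ii_{\tG'\tM'}(\tsigma'))|_{G'}$ form a single $\tG'$-orbit, all occurring with one common multiplicity, which is precisely $\bold m_{M',G'}((\ii_{\tG'\tM'}(\tsigma'))|_{G'})$. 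Writing $\Pi$ for the set of inequivalent constituents and $\bold m$ for this common multiplicity, comparing dimensions (more precisely, using that $\ii_{\tG'\tM'}(\tsigma')|_{G'} = \bigoplus_{\pi'\in\Pi}\bold m\,\pi'$ together with $\End_{G'}$ having dimension $|\Pi|\cdot\bold m^2$, exactly as in the proof of Proposition \ref{size of L-packets}) gives $\bold m^2\cdot|\Pi| = [\tG':(Z(\tG')\cdot G')]$ — i.e. the square of the multiplicity times the number of constituents equals the order of the relevant finite group $\tG'/\tG'_{\pi'}$ times its complement index, all governed by $(\tG'/G')^D$.

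Next I would identify $\Pi = \Pi_\phi(G')$ and count it. By the construction of $\Pi_\phi(G')$ in Section \ref{Construction of $L$-packets}, the constituents of $\ii_{G'M'}(\tsigma'|_{M'})$ are exactly the members of $\Pi_\phi(G')$, and from Lemma \ref{simply transitive} applied on the $G'$-level (together with Theorem \ref{invariant theorem}, which makes $R_{\sigma'}$ well-defined on the packet) one has $|\Pi_\phi(G')| = |R_{\sigma'}|\cdot|\Pi_\phi(M')|$, while the analogous split statement $|\Pi_\phi(G)| = |R_\sigma|\cdot|\Pi_\phi(M)|$ holds with multiplicity one throughout. Running the same dimension count on the split side $GL_n(F)\supseteq SL_n(F)$, where $\bold m(\sigma)=1$ and $\ii_{\tG\tM}(\tsigma)$ is irreducible, yields $1\cdot|\Pi_\phi(G)| = [\tG:(Z(\tG)\cdot G)]$, and similarly $\bold m_{M',G'}^2\cdot|\Pi_\phi(G')| = [\tG':(Z(\tG')\cdot G')]$. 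The key input is then that these two indices agree: $[\tG:(Z(\tG)\cdot G)] = [\tG':(Z(\tG')\cdot G')]$, which follows because both equal $|H^1(F,\mu_n)| = |F^\times/(F^\times)^n|$ — or more directly because $\tG/(Z(\tG)\cdot G) \hookrightarrow H^1(F,Z(\bG))$ and the same for $\bG'$, with $Z(\bG)=Z(\bG')=\mu_n$ by the inner-form hypothesis. Dividing the two relations gives $\bold m_{M',G'}^2 = |\Pi_\phi(G)|/|\Pi_\phi(G')|$, and taking square roots finishes the proof.

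I expect the main obstacle to be the careful bookkeeping that the constituents of the restricted induced representation really do form a single $\tG'$- (equivalently $(\tG'/G')^D$-) orbit with a uniform multiplicity, so that the clean identity $\bold m^2\cdot|\text{(\# constituents)}| = [\tG':(Z(\tG')\cdot G')]$ is legitimate; this is where one must lean on Proposition \ref{simply transitive on the set}, Proposition \ref{pro for lifting}, and the compatibility \eqref{iso bw res and ind} simultaneously, and check there is no subtlety coming from the fact that $\bold m(\sigma')$ can exceed one. A secondary point requiring care is the comparison of the two group indices on the split and inner sides; the cleanest route is to note both $\tG/(Z(\tG)\cdot G)$ and $\tG'/(Z(\tG')\cdot G')$ are computed by Galois cohomology of $Z(\bG)=\mu_n$ exactly as in Remark \ref{rem for isom between two quotients} and the proof of Proposition \ref{size of L-packets}, so that the inner twist does not change the answer. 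Once these two structural facts are in place, the remaining manipulation is the elementary division of the two square relations and is not expected to present difficulty.
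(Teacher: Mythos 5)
Your overall skeleton --- compute $\dim_{\CC}\End_{G'}\bigl((\ii_{\tG' \tM'}(\tsigma'))|_{G'}\bigr)$ by a Schur's-lemma count as $\bold m^2\cdot|\Pi_{\phi}(G')|$, do the same on the split side, and divide --- is exactly the paper's strategy. But the quantity you substitute for that endomorphism-algebra dimension is wrong, and this is a genuine gap rather than bookkeeping. By Tadi\'c's Proposition 2.4 (the same input used in the proof of Proposition \ref{size of L-packets}), the dimension of the commuting algebra of the restriction is $|X(\ii_{\tG' \tM'}(\tsigma'))|$, the order of the group of characters $\eta\in(\tG'/G')^D$ with $\ii_{\tG' \tM'}(\tsigma')\otimes\eta\simeq \ii_{\tG' \tM'}(\tsigma')$; it is \emph{not} the index $[\tG':(Z(\tG')\cdot G')]$. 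That stabilizer group sits inside the character group of $\tG'/(Z(\tG')\cdot G')$, so its order divides the index, but the containment is typically proper: already for $\St_{GL_2(F)}$ restricted to $SL_2(F)$ (with $p$ odd) the restriction is irreducible, so the commuting algebra has dimension $1$, while $[GL_2(F):F^{\times}SL_2(F)]=|F^{\times}/(F^{\times})^2|=4$. For the same reason your split-side identity $|\Pi_{\phi}(G)|=[\tG:(Z(\tG)\cdot G)]$ fails in general: $|\Pi_{\phi}(G)|=[\tG:\tG_{\pi}]$, and $\tG_{\pi}$ can strictly contain $Z(\tG)\cdot G$.

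Consequently the true statement you do prove, that $[\tG:(Z(\tG)\cdot G)]=[\tG':(Z(\tG')\cdot G')]$, cannot carry the comparison. The ingredient actually needed to link the two dimension counts is the transfer of the stabilizer groups themselves: $X(\ii_{\tG' \tM'}(\tsigma'))=X(\ii_{\tG \tM}(\tsigma))$ as subgroups of $(F^{\times})^D$, which follows from the compatibility of the Jacquet--Langlands correspondence with character twists (Remarks \ref{rem for isom between two quotients} and \ref{transfer characters via JL}), combined with Gelbart--Knapp's identification $|X(\ii_{\tG \tM}(\tsigma))|=|S_{\phi}(\widehat{G})|=|\Pi_{\phi}(G)|$ from Proposition \ref{Gelbart-Knapp}, which is where multiplicity one for $GL_n(F)\supset SL_n(F)$ enters. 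With that replacement your division argument closes up and coincides with the paper's proof; without it, the two sides of your final equation are never actually connected.
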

\begin{proof}
Consider the commuting algebra 
\[
{\End}_{G'}(\ii_{\tG' \tM'}(\tsigma')) = {\Hom}_{G'}(\ii_{\tG' \tM'}(\tsigma'), \ii_{\tG' \tM'}(\tsigma')).
\]
Applying \cite[Proposition 2.4]{tad92}, we have 
\begin{align*}
\dim_{\CC} {\End}_{G'}(\ii_{\tG' \tM'}(\tsigma')) 
&=
| \{   \eta \in (\tM/M)^D : {\ii}_{\tG \tM}(\tsigma) \s {\ii}_{\tG \tM}(\tsigma) \otimes \eta     \}| 
\\
&=
|X(\ii_{\tG' \tM'}(\tsigma'))|.
\end{align*}
On the other hand, since the restriction $(\ii_{\tG' \tM'}(\tsigma'))|_{G'}$ is completely reducible, \cite[Lemma 2.1]{tad92}, we have a direct sum
\[
(\ii_{\tG' \tM'}(\tsigma'))|_{G'} \s  \bigoplus_{ \{\tau'\} }  \bold m \; \tau'.
\]
Here $\tau'$ runs through all irreducible inequivalent constituents in $(\ii_{\tG' \tM'}(\tsigma'))|_{G'},$ and $\bold m $ is the common multiplicity of irreducible constituents in $(\ii_{\tG' \tM'}(\tsigma'))|_{G'}.$
We note that ${\Hom}_{G'}(\tau'_1, \tau'_2) = 0$ unless $\tau'_1 \s \tau'_2,$ in which case ${\Hom}_{G'}(\tau'_1, \tau'_2) \s \CC$ 
by Schur's lemma. Hence, we get
\[
{\End}_{G'}(\bigoplus_{ \{\tau'\} } \bold m \; \tau') 
\s \bigoplus_{ \{\tau'\} } {\End}_{G'}(\tau')^{\bold m^2} \s (\CC)^{\bold m^2}
\]
(cf. \cite[Lemma 2.1(d)]{gk82}).
We note that the set of all irreducible inequivalent constituent in $(\ii_{\tG' \tM'}(\tsigma'))|_{G'}$ equals the $L$-packet $\Pi_{\phi}(G')$ (see $\S$\ref{Construction of $L$-packets}). 
Replacing the common multiplicity $\bold m$ by the multiplicity $\bold m_{M',G'}((\ii_{\tG' \tM'}(\tsigma'))|_{G'})$ of $\pi',$ we thus have 
\[
\dim_{\CC}{\End}_{G'}(\ii_{\tG' \tM'}(\tsigma')) = [\bold m_{M',G'}((\ii_{\tG' \tM'}(\tsigma'))|_{G'})]^2 \cdot |\Pi_{\phi}(G')|.
\]
Note that $X(\ii_{\tG' \tM'}(\tsigma'))$ equals $X(\ii_{\tG \tM}(\tsigma)),$ which is in bijection with $\Pi_{\phi}(G)$ (Proposition \ref{Gelbart-Knapp}). Therefore, we have 
\begin{equation} \label{left equality}
\bold m_{M',G'}((\ii_{\tG' \tM'}(\tsigma'))|_{G'}) = \sqrt{\frac{|\Pi_{\phi}(G)|}{|\Pi_{\phi}(G')|}}.
\end{equation}
This proves the lemma.
\end{proof} 

\begin{lm} \label{multi lemma II} 
We have
\[
\bold m_{M',G'}(\tsigma'|_{M'})
=
\sqrt{\frac{|\Pi_{\phi}(M)|}{|\Pi_{\phi}(M')|}} \cdot |W^*(\sigma')| \cdot \bold m(\pi').
\]
\end{lm}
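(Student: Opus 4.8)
The plan is to relate the multiplicity $\bold m_{M',G'}(\tsigma'|_{M'})$ of $\pi'$ in $\ii_{G'M'}(\tsigma'|_{M'})$ to the multiplicity in $\ii_{G'M'}(\sigma')$ and then account for the combinatorics of how the constituents of $\tsigma'|_{M'}$ get shuffled by the Weyl group. First I would decompose $\tsigma'|_{M'} \simeq \bigoplus_{\sigma'' \in \Pi_{\tsigma'}(M')} \bold m(\sigma'')\, \sigma''$; since $\Pi_{\tsigma'}(M') = \Pi_\phi(M')$ and $\bold m(\sigma'')$ is the common multiplicity $\bold m(\sigma')$, inducing gives $\ii_{G'M'}(\tsigma'|_{M'}) \simeq \bigoplus_{\sigma'' \in \Pi_\phi(M')} \bold m(\sigma')\,\ii_{G'M'}(\sigma'')$. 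Hence the multiplicity of $\pi'$ is $\bold m(\sigma')$ times $\sum_{\sigma'' \in \Pi_\phi(M')}$ (multiplicity of $\pi'$ in $\ii_{G'M'}(\sigma''))$.

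Next I would identify which $\sigma''\in\Pi_\phi(M')$ actually contribute to $\pi'$. By the description of $JH$ in Remark \ref{meaning for W*}, the constituents $\sigma''$ with $JH(\sigma'') = JH(\sigma')$ — equivalently those of the form ${}^w\sigma'$ for $w \in W(\sigma)$ — are exactly the ones whose induced representation contains $\pi'$, and for each such $\sigma''$ the multiplicity of $\pi'$ in $\ii_{G'M'}(\sigma'')$ equals $\bold m(\pi')$ (this uses that $\ii_{G'M'}({}^w\sigma') \simeq \ii_{G'M'}(\sigma')$ as $G'$-modules for $w \in W_{M'}$, so the multiplicity is $W_{M'}$-invariant, and $W(\sigma')$-orbits of constituents inside a single induced representation all share the common multiplicity $\bold m(\pi')$). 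The number of such $\sigma''$ is precisely $|W^*(\sigma')| = |W(\sigma)/W(\sigma')|$ by \eqref{W*} and Remark \ref{meaning for W*}. So $\sum_{\sigma'' \in \Pi_\phi(M')}(\text{mult. of }\pi'\text{ in }\ii_{G'M'}(\sigma'')) = |W^*(\sigma')|\cdot \bold m(\pi')$.

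Finally I would insert the value of $\bold m(\sigma')$. From equation \eqref{equality in mul} in the proof of Proposition \ref{size of L-packets}, $|Y(\tsigma')| = |\Pi_\phi(M')| \cdot \bold m(\sigma')^2$, while for the split side $|Y(\tsigma)| = |\Pi_\phi(M)|$ since $\bold m(\sigma) = 1$; and $Y(\tsigma') = Y(\tsigma)$ by Remarks \ref{rem for isom between two quotients} and \ref{transfer characters via JL}. Therefore $\bold m(\sigma')^2 = |\Pi_\phi(M)|/|\Pi_\phi(M')|$, i.e. $\bold m(\sigma') = \sqrt{|\Pi_\phi(M)|/|\Pi_\phi(M')|}$. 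Combining the three steps yields
\[
\bold m_{M',G'}(\tsigma'|_{M'}) = \bold m(\sigma') \cdot |W^*(\sigma')| \cdot \bold m(\pi') = \sqrt{\frac{|\Pi_\phi(M)|}{|\Pi_\phi(M')|}} \cdot |W^*(\sigma')| \cdot \bold m(\pi'),
\]
as claimed. The main obstacle I anticipate is making the second step fully rigorous: one must verify carefully that a constituent $\sigma'' \in \Pi_\phi(M')$ contributes $\pi'$ to $\ii_{G'M'}(\sigma'')$ \emph{if and only if} $\sigma'' = {}^w\sigma'$ for some $w \in W(\sigma)$, and that when it does, the multiplicity is exactly $\bold m(\pi')$ rather than some smaller or larger number — this requires combining the $W_{M'}$-invariance of $\ii_{G'M'}$, the transitivity statement of Lemma \ref{simply transitive}, and the fact from Remark \ref{meaning for W*} that all members of $W^*(\sigma')$ share the same $JH$ set, together with the constancy of multiplicities of constituents within a fixed induced representation (a $W(\sigma')$-orbit phenomenon). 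Everything else is bookkeeping with the identities already established.
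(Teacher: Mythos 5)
Your argument is correct and follows essentially the same route as the paper: the paper's proof also writes the multiplicity as a product $m_1 m_2 m_3$ where $m_1$ is the common multiplicity in $\tsigma'|_{M'}$ (computed, as you do, from $|Y(\tsigma')| = |\Pi_{\phi}(M')|\cdot \bold m(\sigma')^2$ together with $Y(\tsigma')=Y(\tsigma)$ and $\bold m(\sigma)=1$), $m_2=|W^*(\sigma')|$ is the number of inequivalent constituents of $\tsigma'|_{M'}$ whose induced representation contains $\pi'$ (justified by Remark \ref{meaning for W*}), and $m_3=\bold m(\pi')$. Your extra care on the ``if and only if'' in the second step is a reasonable elaboration of what the paper leaves to that remark, not a departure from its method.
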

\begin{proof}
Fix $\sigma'$ in the restriction $\tsigma'|_{M'}$ such that $\pi'$ is an irreducible constituent of $\ii_{G'M'}(\sigma').$ We note that
\[
{\dim}_{\CC}{\Hom}_{G'}(\pi',i_{G'M'}(\tsigma'))
=m_1^2m_2m_3^2,
\]
where 
\begin{itemize}
 \item $m_1$ denotes the common multiplicity of each constituent  in $\tsigma'|_{M'}$, 
 \item $m_2$ denotes the number of inequivalent components $\sigma'$ in the restriction $\tsigma'|_{M'}$ such that $\pi'$ is an irreducible constituent of $\ii_{G'M'}(\sigma')$, and
 \item $m_3$ denotes the multiplicity of $\pi'$ in $\ii_{G' M'} (\sigma').$
\end{itemize}
Thus, the multiplicity $\bold m_{M',G'}(\tsigma'|_{M'})$ equals $m_1^2m_2m_3^2.$ In the same manner as equality \eqref{left equality}, we have 
${m_1= \sqrt{\frac{|\Pi_{\phi}(M)|}{|\Pi_{\phi}(M')|}}.}$  Further, from Remark \ref{meaning for W*}, we have   $m_2= |W^*(\sigma')|.$ We also have $m_3= \bold m(\pi')$ by definition. Therefore, the proof is complete.
\end{proof}

\begin{pro} \label{compare multi}
We have
\begin{equation} \label{equality of multis}
\sqrt{\frac{|\Pi_{\phi}(G)|}{|\Pi_{\phi}(G')|}} = \sqrt{\frac{|\Pi_{\phi}(M)|}{|\Pi_{\phi}(M')|}} \cdot |W^*(\sigma')| \cdot \bold m(\pi').
\end{equation}
\end{pro}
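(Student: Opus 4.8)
The plan is to compare the two multiplicities $\bold m_{M',G'}((\ii_{\tG' \tM'}(\tsigma'))|_{G'})$ and $\bold m_{M',G'}(\tsigma'|_{M'})$ directly, exploiting the isomorphism \eqref{iso bw res and ind} of $G'$-modules, namely $(\ii_{\tG' \tM'}(\tsigma'))|_{G'} \s \ii_{G' M'} (\tsigma'|_{M'})$. First I would observe that this isomorphism forces the multiplicity of the fixed irreducible constituent $\pi'$ to be computed on both sides from the same $G'$-module; hence
\[
\bold m_{M',G'}((\ii_{\tG' \tM'}(\tsigma'))|_{G'}) = \bold m_{M',G'}(\tsigma'|_{M'}).
\]
Then I would simply substitute the closed-form expressions supplied by Lemma \ref{multi lemma I} and Lemma \ref{multi lemma II} for the two sides of this equality. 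The left-hand side equals $\sqrt{|\Pi_{\phi}(G)|/|\Pi_{\phi}(G')|}$, and the right-hand side equals $\sqrt{|\Pi_{\phi}(M)|/|\Pi_{\phi}(M')|} \cdot |W^*(\sigma')| \cdot \bold m(\pi')$. Equating the two yields exactly \eqref{equality of multis}, and the proof is complete.

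The only thing requiring a word of care is to make sure the isomorphism \eqref{iso bw res and ind} is genuinely an isomorphism of $G'$-modules (not just of underlying vector spaces), so that multiplicities of irreducible subquotients are literally equal; this is the compatibility of restriction and parabolic induction cited from \cite[Proposition 4.1]{baj04}, so it is available to us. One should also note that $\pi'$ appears with the same meaning — a fixed irreducible constituent of $\ii_{G'M'}(\sigma')$ for a fixed $\sigma' \hookrightarrow \tsigma'|_{M'}$ — in the statements of both lemmas, which is precisely the setup of Lemma \ref{multi lemma II}, so there is no ambiguity in the constituent being counted.

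There is essentially no obstacle here: the proposition is a bookkeeping identity that packages the two preceding lemmas. The substantive content — the interpretation of $m_1$ via the ratio of $L$-packet sizes as in \eqref{left equality}, and the identification $m_2 = |W^*(\sigma')|$ coming from Remark \ref{meaning for W*} — has already been absorbed into Lemmas \ref{multi lemma I} and \ref{multi lemma II}. If I had to point to a "hard part," it would only be the conceptual one of having set up $W^*(\sigma')$ and the compatibility \eqref{iso bw res and ind} correctly in the first place; given those, the present statement follows in one line.
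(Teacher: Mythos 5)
Your argument is exactly the paper's: equate $\bold m_{M',G'}((\ii_{\tG' \tM'}(\tsigma'))|_{G'})$ with $\bold m_{M',G'}(\tsigma'|_{M'})$ via the $G'$-module isomorphism \eqref{iso bw res and ind}, then substitute the formulas from Lemmas \ref{multi lemma I} and \ref{multi lemma II}. The proposal is correct and matches the paper's proof in both structure and detail.
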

\begin{proof}
From the isomorphism \eqref{iso bw res and ind}, we have
\[
\bold m_{M',G'}((\ii_{\tG' \tM'}(\tsigma'))|_{G'})=\bold m_{M',G'}(\tsigma'|_{M'}).
\]
The proposition is thus a consequence of Lemmas \ref {multi lemma I} and \ref{multi lemma II}. 
\end{proof}
\begin{rem}
Let $\pi_1'$ and $\pi_2'$ be irreducible constituents of $\ii_{G' M'}(\sigma'_1)$ and $\ii_{G' M'}(\sigma'_2)$ for some $\sigma_1'$ and $\sigma_2',$ respectively, in $\Pi_{\phi}(M').$ Due to \eqref{isomorphism of W*}, it turns out that all the factors except $\bold m(\pi')$ in equality \eqref{equality of multis} do not depend on the choice of $\pi'$ . Hence, we have 
\[
\bold m(\pi'_1) = \bold m(\pi'_2).
\]
Further, both ratios $\displaystyle{\frac{|\Pi_{\phi}(G)|}{|\Pi_{\phi}(M)|}}$ and $\displaystyle{\frac{|\Pi_{\phi}(G')|}{|\Pi_{\phi}(M')|}}$ are always square integers.  
\end{rem}

\appendix
\section{Actions of Characters on $L$-packets} \label{behavior of cha act}
Throughout Appendix \ref{behavior of cha act}, we use the notation in Sections \ref{pre} and \ref{L-packets}. In this appendix, we describe the difference between actions of characters of $M$ and $M'$ on their $L$-packets and interpret the difference in $R$-groups for $SL_n$ and its $F$-inner forms in terms of characters (see \eqref{diff in terms of W*} for another interpretation).
We identify $\tM/M$ and $\tM'/M'$ (see Remark \ref{rem for isom between two quotients}).

Throughout Section \ref{R-group for SL(r,D)}, we let $\phi : W_F \times SL_2(\CC) \rightarrow \widehat{M}=\widehat{M'}$ be an elliptic tempered $L$-parameter. Let $\Pi_{\phi}(M')$ be an $L$-packet of $M'$ associated to $\phi.$ Given $\sigma' \in \Pi_{\phi}(M'),$ we define a set 
\[
X(\sigma'):=\{ \chi : M' \rightarrow S^1 ~|~ \sigma' \otimes \chi \s \sigma' \},
\] 
which consists of all unitary characters on $M'$ stabilizing $\sigma'.$ Here $S^1$ denotes the unit circle in $\CC^{\times}.$ It is easy to check that $X(\sigma')$ is an abelian group.
We define a set
\begin{equation} \label{X_phi}
X_{M'}(\phi) :=
\{
\chi : M' \rightarrow S^1 ~|~ \sigma' \otimes \chi \in \Pi_{\phi}(M') ~ \text{for some} ~ \sigma' \in \Pi_{\phi}(M')  
\},
\end{equation}
which consists of all unitary characters on $M'$ stabilizing $\Pi_{\phi}(M').$
For any $\sigma' \in \Pi_{\phi}(M'),$ it is clear that $X_{M'}(\phi) \supseteq X(\sigma').$
\begin{lm} \label{lemma abelian}
The set $X_{M'}(\phi)$ is a finite abelian group.
\end{lm}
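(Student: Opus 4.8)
The plan is to show that $X_{M'}(\phi)$ is a subgroup of a known finite abelian group, namely the group of unitary characters of $M'$ that are trivial on $M'_{\der}$ and have finite order, and then to check it is closed under the group operation. First I would observe that any $\chi \in X_{M'}(\phi)$ must be trivial on $M'_{\der} = \prod_i SL_{m_i}(D)$: indeed $\sigma'$ and $\sigma' \otimes \chi$ both restrict from discrete series of $\tM'$, and since $\sigma'|_{M'_{\der}}$ determines the restriction to $M'_{\der}$ up to the (finite) $\tM'$-conjugacy orbit, any character realizing $\sigma' \otimes \chi \in \Pi_\phi(M')$ factors through $M'/M'_{\der}$, an abelian group. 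Thus $X_{M'}(\phi)$ lives inside $\Hom(M'/M'_{\der}, S^1)$.

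Next I would argue finiteness. The key point is that $\Pi_\phi(M')$ is a finite set (Remark \ref{remark for lifting}, together with the construction in Section \ref{Construction of $L$-packets} and Lemma \ref{simply transitive}), and the group $X_{M'}(\phi)$ acts on $\Pi_\phi(M')$ by $\chi \cdot \sigma' = \sigma' \otimes \chi$. Fix $\sigma' \in \Pi_\phi(M')$; the stabilizer of $\sigma'$ under this action is exactly $X(\sigma')$, which by \cite[Proposition 2.4]{tad92} is finite (it has cardinality $\dim_\CC \Hom_{M'}(\tsigma', \tsigma')$, cf. the proof of Proposition \ref{size of L-packets}). Since the action has finitely many orbits — in fact the orbit of $\sigma'$ is all of $\{\tau' \in \Pi_\phi(M') : \tau' \simeq \sigma' \otimes \chi \text{ for some } \chi\}$, a subset of the finite set $\Pi_\phi(M')$ — the orbit-stabilizer relation $|X_{M'}(\phi)| = |X_{M'}(\phi) \cdot \sigma'| \cdot |X(\sigma')|$ shows $X_{M'}(\phi)$ is finite. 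Alternatively, one can bound $|X_{M'}(\phi)|$ directly by $|\Pi_\phi(M')| \cdot |X(\sigma')|$.

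Finally I would verify the group axioms. If $\chi_1, \chi_2 \in X_{M'}(\phi)$, pick $\sigma' \in \Pi_\phi(M')$ with $\sigma' \otimes \chi_1 \in \Pi_\phi(M')$; applying the defining property of $X_{M'}(\phi)$ to the member $\sigma' \otimes \chi_1$ and the character $\chi_2$ gives $(\sigma' \otimes \chi_1) \otimes \chi_2 = \sigma' \otimes (\chi_1 \chi_2) \in \Pi_\phi(M')$, so $\chi_1 \chi_2 \in X_{M'}(\phi)$; closure under inverses is similar, using that $\sigma' \otimes \chi \in \Pi_\phi(M')$ iff $\sigma' = (\sigma' \otimes \chi) \otimes \chi^{-1}$. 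Commutativity is inherited from $\Hom(M'/M'_{\der}, S^1)$. The main obstacle I anticipate is the first step — pinning down precisely why a character stabilizing the $L$-packet must be trivial on the derived group and have finite order; this requires carefully invoking the construction of $\Pi_\phi(M')$ via restriction of the single discrete series $\tsigma'$ of $\tM'$ (Proposition \ref{proposition of local JL for essential s i} and Section \ref{Construction of $L$-packets}) and the fact that all members of $\Pi_\phi(M')$ are restrictions of characters-twists of that one $\tsigma'$, so that $\chi$ is forced to be (the restriction to $M'$ of) an element of the finite group $\bar L(\tsigma')/X(\tsigma')$-type data. Once that structural input is in hand, the remaining verifications are routine.
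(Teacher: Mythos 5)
Your proposal has the right overall shape, and the orbit--stabilizer idea is a legitimate alternative to the paper's finiteness argument, but as written it has a gap at the central point. Both your closure argument (``applying the defining property of $X_{M'}(\phi)$ to the member $\sigma'\otimes\chi_1$ and the character $\chi_2$'') and your orbit--stabilizer count presuppose that $X_{M'}(\phi)$ genuinely \emph{acts} on $\Pi_{\phi}(M')$, i.e.\ that each $\chi\in X_{M'}(\phi)$ sends \emph{every} member of the packet back into the packet. The definition \eqref{X_phi} only requires this for \emph{some} $\sigma'$, and the equivalence of ``for some'' with ``for all'' is exactly the claim \eqref{claim} that the paper proves first: one lifts $\chi$ to a character $\tilde{\chi}$ of $\tM'$, observes that $\tsigma'_0|_{M'}$ and $(\tsigma'_0\otimes\tilde{\chi})|_{M'}$ share the constituent $\sigma'_0\otimes\chi$, and invokes Proposition \ref{pro for lifting} to conclude the two restrictions have the same constituents. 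You gesture at this structural input in your closing paragraph but never carry it out; without it neither the group law nor the action is available.

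Two smaller points. First, your justification that $\chi$ is trivial on $M'_{\der}$ does not work as stated: a relation between restrictions to $M'_{\der}$ does not force a character to kill $M'_{\der}$. The correct reason is that every character of $M'$ extends to $\tM'=\prod_i GL_{m_i}(D)$ and hence has the form $\prod_i \eta_i\circ\Nrd_i$, which is trivial on $\prod_i SL_{m_i}(D)$; note also that triviality on $M'_{\der}$ alone cannot give finiteness, since $\Hom(M'/M'_{\der},S^1)$ is infinite. Second, the finiteness of the stabilizer $X(\sigma')$ is not what \cite[Proposition 2.4]{tad92} provides: that result computes $|Y(\tsigma')|=\dim_{\CC}\Hom_{M'}(\tsigma',\tsigma')$ for the group $Y(\tsigma')$ of characters of $\tM'/M'$ fixing $\tsigma'$, which is a different group from $X(\sigma')\subseteq (M')^{D}$. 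To get finiteness of $X(\sigma')$ (or of $X_{M'}(\phi)$) one should argue as the paper does: all members of $\Pi_{\phi}(M')$ share the central character of $\tsigma'$ restricted to $Z(M')$, so $\chi$ is trivial on $Z(M')\cdot M'_{\der}$, and a Galois cohomology computation shows this subgroup has finite index in $M'$. Once these repairs are made your orbit--stabilizer route does go through, but it then rests on the same two inputs (the claim and the finite-index statement) as the paper's own proof.
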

\begin{proof}
First, we claim that, given any character $\chi : M'\rightarrow S^1,$
\begin{equation} \label{claim}
\sigma'_0 \otimes \chi \in \Pi_{\phi} ~ \text{for some} ~ \sigma'_0 \in \Pi_{\phi}(M')~\Longleftrightarrow~ \sigma' \otimes \chi \in \Pi_{\phi} ~ \text{for all} ~ \sigma' \in \Pi_{\phi}(M').
\end{equation}
Indeed, we choose $\tsigma'_0 \in \Pi_{\disc}(\tM')$ such that $\sigma'_0 \hookrightarrow \tsigma'_0|_{M'}.$ Let $\tilde{\chi} : \tM' \rightarrow S^1$ be a character whose restriction to $M'$ is identical with $\chi.$ From our construction of $L$-packets for $M',$ the set of inequivalent irreducible constituents in $\tsigma'_0 |_{M'}$ equals 
$\Pi_{\phi}(M').$ Since $
\sigma'_0 \otimes \chi$ is an irreducible constituent of both $\tsigma'_0|_{M'}
$ and $(\tsigma'_0 \otimes \tilde\chi)|_{M'},
$
we have
\begin{equation} \label{some iso}
\tsigma'_0 |_{M'} \s (\tsigma'_0 \otimes \tilde\chi)|_{M'} \s (\tsigma'_0 |_{M'}) \otimes \chi
\end{equation}
as representations of $M'.$ Thus, it follows that $\sigma' \otimes \chi$ lies in $\Pi_{\phi}$ for any $\sigma' \in \Pi_{\phi}(M').$ 

Next, we show that $X_{M'}(\phi)$ is an abelian group. Let $\chi_1$ and $\chi_2$ be in $X_{M'}(\phi).$ It suffices to show that $\chi_2 \chi^{-1}_1 \in X_{M'}(\phi)$ since $X_{M'}(\phi)$ is a subset of the abelian group of unitary characters on $M'.$ From the definition of $X_{M'}(\phi)$ we have $\sigma'_1$ and $\sigma'_2$ in $\Pi_{\phi}(M')$ such that both $\sigma'_1 \otimes \chi_1$ and $\sigma'_2 \otimes \chi_2$ lie in $\Pi_{\phi}(M').$  We set $\sigma'_* := \sigma'_1 \otimes \chi_1.$ Since $\sigma'_* \otimes \chi_1^{-1} = \sigma'_1  \in \Pi_{\phi}(M'),$ it follows from the claim \eqref{claim} that
\[
\sigma'_2 \otimes \chi_2 \chi_1^{-1} \in \Pi_{\phi}(M').
\]
Hence, we have $\chi_2 \chi^{-1}_1 \in X_{M'}(\phi).$

Finally, it remains to show that $X_{M'}(\phi)$ is finite. Let $\chi \in X_{M'}(\phi)$ be given such that $\sigma' \otimes \chi \in \Pi_{\phi}(M').$ Note that all members in $\Pi_{\phi}(M')$ have the same central character which is the restriction of the central character of $\tsigma'$ to the center $Z({M'})$ of $M'.$ Here $\tsigma'$ is a lifting of $\sigma'$ such that $\sigma' \hookrightarrow \tsigma'|_{M'}.$  So, the character $\chi$ on $M'$ is trivial on $Z({M'}),$ which implies $\chi$ is trivial on $Z({M'}) \cdot M'_{\der} .$ Applying Galois cohomology (cf. the proof of Proposition \ref{size of L-packets}), we have the exact sequence
\[
1 
\rightarrow 
Z({M'_{\der}}) 
\rightarrow
Z({M'}) \times M'_{\der}
\rightarrow
M'
\rightarrow
H^1(F, Z({\bM'_{\der}})).
\]
Since $H^1(F, Z({\bM'_{\der}}))$ is finite, we notice that $Z({M'}) \cdot M'_{\der} $ has finite index in $M'.$ It thus follows that $X_{M'}(\phi)$ is finite.
\end{proof}
\begin{rem} \label{rem for singleton}
We note that, if $\Pi_{\phi}(M')$ is a singleton, it then follows that $X(\sigma')= X_{M'}(\phi).$
\end{rem}
\begin{lm} \label{lm for cru thm}
For any $\sigma'_1, \sigma'_2 \in \Pi_{\phi}(M'),$ we have 
\[
X(\sigma'_1) = X(\sigma'_2).
\]
In other words, for any character $\chi$ on $M',$ $\sigma'_1 \s \sigma'_1 \otimes \chi$ if and only if $\sigma'_2 \s \sigma'_2 \otimes \chi.$
\end{lm}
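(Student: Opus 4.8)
The plan is to reduce the statement to the single fact that $\sigma_1'$ and $\sigma_2'$, lying in a common $L$-packet, are conjugate under $\widetilde M'$, and then to observe that conjugation by $\widetilde M'$ fixes every character of $M'$.

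First I would fix a lift $\tsigma_1' \in \Pi_{\disc}(\tM')$ with $\sigma_1' \hookrightarrow \tsigma_1'|_{M'}$. By the construction of the $L$-packet $\Pi_\phi(M')$ in Section~\ref{Construction of $L$-packets} together with Proposition~\ref{pro for lifting} (equivalently Remark~\ref{remark for lifting}), one has $\Pi_{\tsigma_1'}(M') = \Pi_\phi(M')$, so $\sigma_2'$ is also an irreducible constituent of $\tsigma_1'|_{M'}$. Lemma~\ref{lemma about transitive action} (equivalently Lemma~\ref{simply transitive}) then furnishes $x \in \tM'$ with ${}^x\sigma_1' \s \sigma_2'$.

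Next, by symmetry it suffices to show $X(\sigma_1') \subseteq X(\sigma_2')$. Given $\chi \in X(\sigma_1')$, I would choose an extension $\widetilde\chi$ of $\chi$ to a character of $\tM'$, exactly as in the proof of Lemma~\ref{lemma abelian}. Since every character of $\tM' = \prod_{i=1}^{r} GL_{m_i}(D)$ factors through the reduced norms $\prod_i \Nrd_i$, and the reduced norm is invariant under conjugation, $\widetilde\chi$ is a class function on $\tM'$; hence ${}^x\widetilde\chi = \widetilde\chi$, and restricting to $M'$ gives ${}^x\chi = \chi$. Now applying the operation ${}^x(\cdot)$ to the isomorphism $\sigma_1' \otimes \chi \s \sigma_1'$, the same intertwiner shows ${}^x\sigma_1' \otimes {}^x\chi \s {}^x\sigma_1'$, which in view of the previous two points reads $\sigma_2' \otimes \chi \s \sigma_2'$. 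Thus $\chi \in X(\sigma_2')$, and running the argument with $x^{-1}$ in place of $x$ gives the reverse inclusion, so $X(\sigma_1') = X(\sigma_2')$. The reformulation for an arbitrary, not necessarily unitary, character $\chi$ on $M'$ is automatic, since $\sigma_1'$ being unitary forces $|\chi|$ to be trivial whenever $\sigma_1' \otimes \chi \s \sigma_1'$.

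The hard part, and essentially the only step requiring genuine input, is the conjugacy of $\sigma_1'$ and $\sigma_2'$ established in the first paragraph: it rests on the transitivity of the $\tM'$-action on the set of constituents of a restriction and on the lifting dictionary of Tadi\'c. Everything else is formal, using only that characters of $\tM'$ are class functions, which is what makes the stabilizers $X(\sigma')$ insensitive to the conjugation.
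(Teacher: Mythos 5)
Your proof is correct and follows essentially the same route as the paper: both use the transitivity of the $\tM'$-conjugation action on $\Pi_{\tsigma'}(M')$ to produce $x\in\tM'$ with ${}^x\sigma_1'\s\sigma_2'$, extend $\chi$ to a character $\widetilde\chi$ of $\tM'$ (hence a class function), conclude ${}^x\chi=\chi$, and transport the isomorphism $\sigma_1'\otimes\chi\s\sigma_1'$ by conjugation. The only cosmetic differences are that you justify conjugation-invariance of $\widetilde\chi$ via the reduced norm rather than simply noting that any homomorphism into $\CC^\times$ is a class function, and you add the (correct) remark reducing general characters to unitary ones.
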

\begin{proof}
There exists an element $g \in \tM'$ such that $\sigma'_2 \s {^g}\sigma'_1$ by Lemma \ref{lemma about transitive action} and our construction of $\Pi_{\phi}(M')$ in Section \ref{Construction of $L$-packets}. Since $M'_{\der} \subseteq M' \subseteq \tM',$ we notice that any character $\chi$ on $M'$ is the restriction of a character $\widetilde{\chi}$ on $\tM'.$ 
So, we have ${^g}\chi(m')= \widetilde{\chi}(g^{-1}m'g)=\widetilde{\chi}(m')=\chi(m')$ for any $m' \in M'.$  Thus we get 
\begin{align*}
\chi \in X(\sigma'_2) 
& ~\Leftrightarrow~ 
 \sigma'_2 \s \sigma'_2 \otimes \chi  \\
& ~\Leftrightarrow~ {^g}\sigma'_1 \s ({^g}\sigma'_1) \otimes \chi ~\Leftrightarrow~
{^g}\sigma'_1 \s {^g}(\sigma'_1 \otimes \chi) ~\Leftrightarrow~
\sigma'_1 \s \sigma'_1 \otimes \chi
~\Leftrightarrow~
\chi \in X(\sigma'_1).
\end{align*}
\end{proof}
Let $\Pi_{\phi}(M)$ be an $L$-packet of $M$ associated to $\phi.$ Given $\sigma \in \Pi_{\phi}(M),$ we define two groups $X(\sigma)$ and $X_{M}(\phi)$ in the same manner as those for $M'$
\begin{align*}
X(\sigma) &:= \{ \chi : M \rightarrow S^1 ~|~ \sigma \otimes \chi \s \sigma
\},& \\
X_{M}(\phi) &:=
\{
\chi : M \rightarrow S^1 ~|~ \sigma \otimes \chi \in \Pi_{\phi}(M) ~ \text{for some} ~ \sigma \in \Pi_{\phi}(M)  
\}.&
\end{align*}
Then we have 
\begin{equation} \label{for m}
X(\sigma) = X_{M}(\phi).
\end{equation}
Indeed, if $\sigma,\sigma \otimes \chi\in\Pi_{\phi}(M)$ then, by  \cite[Theorem 1.2]{tad92}, their restrictions to $M_{\der}$ are equivalent, and hence, by multiplicity one of restriction from $\tM,$ to $M,$ they must be isomorphic. 
We remark that this is also known in the case for $L$-packets for $GSp(4)$ \cite[Proposition 2.2]{gtsp10}.

We notice the absence of the multiplicity one  of restriction from $\tM'$ to $M'$ (cf. \cite[p.215]{art06}). We establish the following proposition which is a different phenomenon from the split form.
\begin{pro} \label{a pro}
Let $\sigma \in \Pi_{\phi}(M)$ and $\sigma' \in \Pi_{\phi}(M')$ be given. Then we have 
\[
X(\sigma') \subseteq X_{M'}(\phi) \overset{1-1}{\longleftrightarrow} X_{M}(\phi) = X(\sigma).
\]
\end{pro}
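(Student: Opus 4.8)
The plan is to reduce the statement to three facts. The inclusion $X(\sigma')\subseteq X_{M'}(\phi)$ is immediate from the definitions (it was noted just before Lemma~\ref{lemma abelian}), and the equality $X_M(\phi)=X(\sigma)$ is precisely \eqref{for m}, which rests on multiplicity one of restriction from $\tM$ to $M$. So the substantive part is the middle bijection $X_{M'}(\phi)\overset{1-1}{\longleftrightarrow}X_M(\phi)$, and I would produce it by transporting characters through the local Jacquet--Langlands correspondence.

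First I would build a group isomorphism on the level of all characters. Every character of $M'$ is trivial on $M'_{\der}=\tM'_{\der}$ and, since $\tM'/\tM'_{\der}\s(F^\times)^k$ is a locally compact abelian group in which $M'/M'_{\der}$ sits as a closed subgroup, it extends to a character of $\tM'$; the extensions of a fixed $\chi'$ form a coset of $\ker\bigl((\tM')^D\twoheadrightarrow(M')^D\bigr)=(\tM'/M')^D$, and similarly for $M$. Since any character of $\tM'=\prod_i GL_{m_i}(D)$ is $\bigotimes_i(\eta_i\circ\Nrd_i)$ and any character of $\tM=\prod_i GL_{n_i}(F)$ is $\bigotimes_i(\eta_i\circ\det_i)$, the assignment $J_0\colon\bigotimes_i(\eta_i\circ\det_i)\mapsto\bigotimes_i(\eta_i\circ\Nrd_i)$ is a group isomorphism $(\tM)^D\xrightarrow{\ \sim\ }(\tM')^D$ compatible with $\mathbf{C}$-twisting, i.e.\ $\mathbf{C}(\tsigma\otimes\tilde\chi)=\mathbf{C}(\tsigma)\otimes J_0(\tilde\chi)$ by Remark~\ref{transfer characters via JL}. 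A short computation using $M=\ker(\prod_i\det_i)$ and $M'=\ker(\prod_i\Nrd_i)$ identifies both $(\tM/M)^D$ and $(\tM'/M')^D$ with the ``diagonal'' characters (those with all $\eta_i$ equal), so $J_0$ carries one onto the other and descends to a group isomorphism $\Psi\colon(M')^D\xrightarrow{\ \sim\ }(M)^D$, determined by the rule that if $\tilde\chi\in(\tM)^D$ restricts to $\chi$ then $J_0(\tilde\chi)$ restricts to $\Psi^{-1}(\chi)$.

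Second, I would prove the key characterisation: fixing compatible lifts $\tsigma\in\Pi_{\disc}(\tM)$ and $\tsigma'=\mathbf{C}(\tsigma)\in\Pi_{\disc}(\tM')$ realising $\Pi_{\phi}(M)$ and $\Pi_{\phi}(M')$ (Section~\ref{Construction of $L$-packets}), one has
\[
\chi'\in X_{M'}(\phi)\iff \tsigma'\otimes\tilde\chi'\s\tsigma'\otimes\eta'\ \text{for some}\ \eta'\in(\tM'/M')^D,
\]
for one (equivalently every) extension $\tilde\chi'$ of $\chi'$, and the same for $M$. For the forward direction, if $\sigma'_0,\sigma'_0\otimes\chi'\in\Pi_{\phi}(M')$ then $\sigma'_0\otimes\chi'$ is a common irreducible constituent of $\tsigma'|_{M'}$ and of $(\tsigma'|_{M'})\otimes\chi'=(\tsigma'\otimes\tilde\chi')|_{M'}$, so Proposition~\ref{pro for lifting}, (ii)$\Rightarrow$(i), applies; conversely, if $\tsigma'\otimes\tilde\chi'\s\tsigma'\otimes\eta'$ then restriction to $M'$ (on which $\eta'$ is trivial) gives $(\tsigma'|_{M'})\otimes\chi'\s\tsigma'|_{M'}$, hence $\{\tau'\otimes\chi':\tau'\in\Pi_{\phi}(M')\}=\Pi_{\phi}(M')$ and $\chi'\in X_{M'}(\phi)$. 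Finally I would glue the pieces: applying $\mathbf{C}$ and using $J_0\bigl((\tM/M)^D\bigr)=(\tM'/M')^D$, the defining condition for $\chi\in X_M(\phi)$ turns verbatim into the defining condition for $\Psi^{-1}(\chi)\in X_{M'}(\phi)$, so $\Psi$ restricts to a bijection (in fact a group isomorphism) $X_{M'}(\phi)\to X_M(\phi)$; together with $X(\sigma')\subseteq X_{M'}(\phi)$ and $X_M(\phi)=X(\sigma)$ this is the Proposition.

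I expect the main obstacle to be the bookkeeping in the key characterisation and its transport through $\mathbf{C}$: the group $X_{M'}(\phi)$ is defined via $L$-packets, whereas the natural manipulations take place at the level of $\tM'$-representations and their restrictions, and one must check that the existential quantifier ``for some $\eta'\in(\tM'/M')^D$'' is preserved on both sides. The point that makes this go through despite the failure of multiplicity one for $\tM'\supset M'$ is that every element of $(\tM'/M')^D$ restricts trivially to $M'$, combined with the compatibility of restriction with $\mathbf{C}$-twisting (Remark~\ref{transfer characters via JL}); the extension-of-characters step and the diagonal identification of $(\tM/M)^D$ with $(\tM'/M')^D$ are routine.
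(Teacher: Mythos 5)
Your argument is correct and follows essentially the same route as the paper's proof: both transport characters through the local Jacquet--Langlands correspondence via $\eta_i\circ\det_i\leftrightarrow\eta_i\circ\Nrd_i$ (Remark \ref{transfer characters via JL}), lift characters of $M$ and $M'$ to $\tM$ and $\tM'$, and compare the stabilizer conditions at the level of the lifts $\tsigma$ and $\tsigma'$. Your explicit characterization of $X_{M'}(\phi)$ through Proposition \ref{pro for lifting} spells out a step that the paper's chain of equivalences leaves implicit, but the underlying mechanism is the same.
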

\begin{proof}
We recall any character $\tilde{\chi}$ on $\tM = \prod_{i=1}^k GL_{n_i}(F)$ is of the form 
$
\prod_{i=1}^k \tilde{\chi}_i \circ \det_i,
$
where each $\tilde{\chi}_i$ denotes a character on $F^{\times}$ and $\det_i$ denotes the determinant on $GL_{n_i}(F).$ Likewise, any character $\tilde{\chi}'$ on $\tM' = \prod_{i=1}^k GL_{m_i}(D)$ is of the form 
$
\prod_{i=1}^k \tilde{\chi}_i \circ \Nrd_i,
$
where each $\tilde{\chi}_i$ denotes a character on $F^{\times}$ and each $\Nrd_i$ denotes the reduced norm on $GL_{m_i}(D).$ Here $D$ denotes a central simple division algebra of dimension $d$ over $F$ with $n=dm,$ where $\sum_{i=1}^k n_i = n$ and $\sum_{i=1}^k m_i = m.$ Under the local Jacquet-Langlands correspondence between $\Pi_{\disc}(M)$ and $\Pi_{\disc}(M'),$  if $\tsigma$ corresponds to $\tsigma',$ then  $\tsigma \otimes \tilde\chi$ corresponds to $\tsigma' \otimes \tilde\chi',$ since each $\tilde{\chi}_i \circ \det_i$ corresponds to $\tilde{\chi}_i \circ \Nrd_i$ (see Remark \ref{transfer characters via JL}). 

Let $\eta' \in X(\sigma')$ be given. We denote by $\tilde{\eta}' : \tM' \rightarrow S^1$ a character which restricts to $\eta'$ on $M'.$  Then, using above arguments, we have
\[ \eta' \in X(\sigma') 
~\Leftrightarrow~ \sigma' \s \sigma' \otimes \eta' ~\Leftrightarrow~  \tsigma' \s \tsigma' \otimes \tilde\eta'
~\Leftrightarrow~ \tsigma \s \tsigma \otimes \tilde\eta
~\Leftrightarrow~
\sigma \s \sigma \otimes \eta
~\Rightarrow~ \eta \in X(\sigma),
\]
where $\eta$ is a character on $M$ such that $\tilde\eta|_{M} = \eta.$
\end{proof}
\begin{rem}
We note that the lifting $\tilde{\chi}$ of $\chi$ is uniquely determined up to a character on $\tM/M$ (\cite[Corollary 2.5]{tad92}) and the local Jacquet-Langlands correspondence is uniquely characterized by the character relation (see Proposition \ref{proposition of local JL for essential s i}). Hence, the bijection in Proposition \ref{a pro} becomes an isomorphism and unique. 
\end{rem}
\begin{rem}
Let $\Psi_{nr}(M)$ be the group of all unramified characters on $M.$ Given $\sigma \in \Pi_{\phi}(M),$ we set $\Stab_{\Psi_{nr}(M)}(\sigma):=\{\psi \in \Psi_{nr}(M) ~|~ \sigma \otimes \psi \s \sigma \}.$ Given $\sigma' \in \Pi_{\phi}(M'),$ we define $\Psi_{nr}(M')$ and $\Stab_{\Psi_{nr}(M')}(\sigma')$ for the inner form $M'$ in the same manner. Note that $\Psi_{nr}(M) \s \Psi_{nr}(M')$ (cf. \cite[Section 6]{roc09}). Hence, Proposition \ref{a pro} implies that 
\[
{\Stab}_{\Psi_{nr}(M')}(\sigma') \subseteq {\Stab}_{\Psi_{nr}(M)}(\sigma)
\]
for any $\sigma \in \Pi_{\phi}(M)$ and $\sigma' \in \Pi_{\phi}(M').$  
\end{rem}

\begin{pro} \label{chi}
Let $\sigma' \in \Pi_{\phi}(M')$ be given. Choose $\tsigma' \in
\Pi_{\disc}(\tM')$ such that $\sigma' \hookrightarrow \tsigma'|_{M'}.$ Let
$w$ be an element in $W_{\tM'} = W_{M'}$ such that $^w\tsigma' \s \tsigma' \otimes
\eta$ for some $\eta \in (\tM'/M')^D.$ Then there exists a unitary
character $\chi' \in M'^D$ such that
\[
^w \sigma' \s \sigma' \otimes \chi'.
\]
\end{pro}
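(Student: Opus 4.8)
The plan is first to place ${}^w\sigma'$ inside the $L$-packet $\Pi_{\phi}(M')$, and then -- this is where the real content lies -- to show it differs from $\sigma'$ by a character of $M'$, exploiting that a Weyl representative of $w$ can be chosen inside $G'=SL_m(D)$ and hence has trivial reduced norm. Concretely, I would fix a representative $\dot w$ of $w$ in $N_{G'}(A_{M'})$; since $\dot w$ normalizes the split torus $A_{M'}$ it normalizes both $M'$ and $\tM'=Z_{\tG'}(A_{M'})$, and $\dot w\in G'=SL_m(D)$ forces $\Nrd(\dot w)=1$. Restricting the given isomorphism ${}^w\tsigma'\s\tsigma'\otimes\eta$ to $M'$ and using that $\eta\in(\tM'/M')^D$ is trivial on $M'$ yields ${}^w(\tsigma'|_{M'})\s\tsigma'|_{M'}$ as $M'$-modules; since $\sigma'\hookrightarrow\tsigma'|_{M'}$ this gives ${}^w\sigma'\hookrightarrow\tsigma'|_{M'}$, so ${}^w\sigma'$ is an irreducible constituent of $\tsigma'|_{M'}$, i.e. ${}^w\sigma'\in\Pi_{\tsigma'}(M')=\Pi_{\phi}(M')$.

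Next I would reduce the conclusion to a statement about restriction to $M'_{\der}$. Since $\bM'_{\der}=\tbM'_{\der}$ is semisimple and $\bM'_{\der}\subseteq\bM'$, Proposition \ref{pro for lifting} applies to the pair $(\bM',\bM'_{\der})$ in place of $(\tbG,\bG)$: we have ${}^w\sigma'\s\sigma'\otimes\chi'$ for some $\chi'\in(M'/M'_{\der})^D$ if and only if ${}^w\sigma'|_{M'_{\der}}$ and $\sigma'|_{M'_{\der}}$ share an irreducible constituent. So it suffices to produce such a common constituent. Once that is done, $\chi'$ is automatically unitary: $\sigma'$ is a discrete series representation and ${}^w\sigma'$ is a conjugate of one, so both are unitary, and comparison of central characters forces $\chi'$ to be unitary.

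The main step is therefore the common-constituent claim, and I expect it to be the main obstacle. Restricting once more, ${}^w(\tsigma'|_{M'_{\der}})\s\tsigma'|_{M'_{\der}}$, so conjugation by $\dot w$ preserves the set $\mathcal P:=\Pi_{\tsigma'}(M'_{\der})$ of irreducible constituents of $\tsigma'|_{M'_{\der}}$. By Clifford theory $\tM'$ acts transitively on $\mathcal P$, and restricting this action to the normal subgroup $M'$ breaks $\mathcal P$ into $M'$-sub-orbits, the sub-orbit through a constituent of $\sigma'|_{M'_{\der}}$ being precisely the set of constituents of $\sigma'|_{M'_{\der}}$ (and likewise for each member of $\Pi_{\phi}(M')$). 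Since $\dot w$ normalizes $M'$ and preserves $\mathcal P$, it permutes these sub-orbits; what remains is to show it fixes the one attached to $\sigma'$. Here $\Nrd(\dot w)=1$ enters: because $M'=\ker(\Nrd\colon\tM'\to F^{\times})$, the $\tM'$-action on the set of $M'$-sub-orbits factors through $\Nrd$, and since $\Nrd$ is invariant under conjugation in $\tG'$ the resulting action of $\langle\tM',\dot w\rangle$ is controlled by the reduced norm, on which $\dot w$ acts trivially; a reduced-norm bookkeeping then shows that conjugation by $\dot w$ fixes the sub-orbit attached to $\sigma'$, which supplies the common constituent and hence ${}^w\sigma'\s\sigma'\otimes\chi'$.

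The delicate point -- the reason the hypothesis must be used in full strength, with $\dot w$ taken inside $SL_m(D)$ -- is exactly this last assertion, namely that the ``twist class'' of $\sigma'$ inside $\Pi_{\phi}(M')$ is stable under conjugation by $\dot w$. Merely knowing that ${}^w\sigma'$ lies somewhere in $\Pi_{\phi}(M')$ is genuinely weaker, since multiplicity one of restriction fails for $GL_m(D)\supseteq SL_m(D)$, and it is precisely this gap that the reduced-norm computation is designed to close.
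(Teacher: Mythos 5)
Your reduction is the same as the paper's: you pass to $M'_{\der}$ and invoke Proposition \ref{pro for lifting} for the pair $(\bM',\bM'_{\der})$ to convert the desired isomorphism ${}^w\sigma'\s\sigma'\otimes\chi'$ into the statement that $({}^w\sigma')|_{M'_{\der}}$ and $\sigma'|_{M'_{\der}}$ share an irreducible constituent. Your preliminary observation that ${}^w\sigma'\in\Pi_{\phi}(M')$ is correct but, as you yourself note, strictly weaker than what is needed. The problem is that the step you defer to ``reduced-norm bookkeeping'' is the entire content of the proposition, and the mechanism you propose for it does not work. The set of $M'$-sub-orbits of $\Pi_{\tsigma'}(M'_{\der})$ is a torsor under $\tM'/(M'\cdot\tM'_{\rho'})$, and it is true that this group is a quotient of $F^{\times}$ via $\Nrd$; but $\dot w$ does not lie in $\tM'$, so its action on this torsor is not given by that homomorphism. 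Since $\Nrd$ is conjugation-invariant, conjugation by $\dot w$ commutes with the torsor structure and therefore acts as translation by some element $t_w$ of the group --- and nothing in your argument pins down $t_w$. Knowing $\Nrd(\dot w)=1$ gives no information about $t_w$: the hypothesis that must be exploited is not a property of the representative $\dot w$ but the relation ${}^w\tsigma'\s\tsigma'\otimes\eta$ together with the combinatorics of $w$ as a permutation.

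What the paper actually does at this point is an explicit construction. Decompose $w$ into disjoint cycles; on a cycle $(1\,2\cdots j)$ the relation ${}^w\tsigma'\s\tsigma'\otimes\eta$ forces $\pi'_{i+1}\s\pi'_i\otimes\eta$, so the components $\rho'_1,\dots,\rho'_j$ of a constituent $\rho'$ of $\sigma'|_{M'_{\der}}$ all occur in the single restriction $\pi'_1|_{SL_{m_1}(D)}$. By the transitivity of Lemma \ref{lemma about transitive action} one finds $a_1,\dots,a_{j-1}\in D^{\times}$ with $\rho'_{i+1}={}^{\delta(a_i)}\rho'_i$, and the cyclic structure lets one take the last intertwiner to be the inverse of the product of the others, so that the resulting block-diagonal element $b_1$ satisfies $\Nrd(b_1)=1$, i.e.\ lies in $M'$, and realizes ${}^{b_1}\rho'={}^{w_1}\rho'$. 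Assembling these over all cycles produces $b\in M'$ with ${}^{b}\rho'\s{}^{w}\rho'$, and since ${}^{b}\sigma'\s\sigma'$ this exhibits ${}^w\rho'$ as a common constituent of $\sigma'|_{M'_{\der}}$ and $({}^w\sigma')|_{M'_{\der}}$. The reduced-norm condition is thus imposed on the intertwiners $a_i$ (where there is a genuine choice to be made), not read off from $\dot w$. Your proposal is missing exactly this construction, so as written it has a gap at its central step.
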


\begin{proof}
It is enough to show that there exist an irreducible representation $\rho'$ of $M'_{\der}$ such that ${^w}\rho'$ is a component of both $\sigma'|_{M'_{\der}}$ and $(^w\sigma')|_{M'_{\der}}.$
We shall follow the idea on \cite[Lemma 2.6]{gol06} (cf. \cite[Lemma 2.3]{go94sl}). 

We note that $M'_{\der}$ is of the form
\[
SL_{m_1}(D) \times  SL_{m_2}(D) \times \cdots \times SL_{m_k}(D).
\]
Write $\tsigma' = \pi'_1 \otimes \pi'_2 \otimes \cdots \otimes \pi'_k.$ 
Let $\rho'$ be an irreducible constituent in $\sigma'|_{M'_{\der}}.$ Then $\rho'$ is isomorphic to $\rho' \s \rho'_1 \otimes \rho'_2 \otimes \cdots \otimes \rho'_k$ for some irreducible constituent $\rho'_i$ in $\pi_i'|_{SL_{m_i}(D)}.$ Suppose $w=w_1w_2 \cdots w_t$ is the disjoint cycle decomposition of $w$ by regarding $W_{\tM'}$ as a subgroup of the group $S_k$ of permutations on $k$ letters. Without loss of generality, we assume that $w_1=(1 \, 2 \cdots j).$   Since ${^w}\tsigma' \s \tsigma' \otimes \eta,$ we then have $\pi'_{i+1} \s \pi'_i \otimes \eta \s \pi'_1 \otimes \eta^i,$ for $i=1,2, \cdots, j-1,$ and $\pi'_1 \s \pi'_1 \otimes \eta^j.$  (Here, by abuse of notation, $\eta$ is regarded as a character on $GL_{m_1}(D).$) Thus, for each $1 \leq i \leq j,$ the representation $\rho'_i$ is an irreducible constituent in $\pi_1|_{SL_{m_1}(D)}.$ By Lemma \ref{lemma about transitive action}, for each $1 \leq i \leq j-1,$ there is an $a_i \in D^{\times},$ so that $\rho'_{i+1}={^{\delta(a_i)}}{\rho'}_i,$ where
\[
\delta(a) =  \left( \begin{array}{ c c }
     a &  \\
      & I_{m_1-1}
  \end{array} \right) .
\]
Let $a_j=(a_1 \, a_2 \cdots a_{j-1})^{-1}.$ Then $^{\delta(a_j)}{\rho'}_j=\rho_1.$ Set 
$$b_1 = \diag(\delta(a_1), \delta(a_2), \cdots, \delta(a_j), 1,1,\cdots,1).$$
 Then we have $\det\circ \Nrd (b_1) = 1,$ and 
 $${^{b_1}}\rho'= \rho'_2  \otimes \cdots \otimes \rho'_j \otimes \rho'_1 \otimes\rho_{j+1}\otimes\cdots\otimes\rho_k= {^{w_1}}\rho'.$$
  Similarly, for $i=2, 3, \cdots, t,$ we can find such a $b_i$ such that $\det\circ \Nrd (b_i)=1$ and $\displaystyle{^{b_i}\rho'={^{w_i}}\rho'.}$  Setting $b=\diag(b_1, \cdots, b_t, 1, \cdots, 1),$ we have $b \in M'$ and $^{b}\rho' \s {^w}\rho'.$ Therefore, since $^{b}\sigma' \s \sigma',$ the irreducible representation $^{w}\rho'$ of $M'_{\der}$ must be in both restrictions $\sigma'|_{M'_{\der}}$ and $(^{w}\sigma')|_{M'_{\der}}.$   This completes the proof.
\end{proof}
We set
\[
\bar{W} := \{ w \in W_{M'} : {^w}\tsigma' \s \tsigma' \otimes
\eta ~~\text{for some}~~\eta \in (\tM'/M')^D \}.
\]
Then Proposition \ref{chi} allows us to define a map from $\bar{W}$ to
$X_{M'}(\phi)/X(\sigma')$ by sending $w \mapsto \chi = \chi_w.$ Therefore,
given any $\sigma\in\Pi_{\phi}(M)$ and $\sigma'\in\Pi_{\phi}(M'),$ we have
from Proposition \ref{analogue of goldberg lemma} and the definition
of $W^*(\sigma')$ in \eqref{W*}
\begin{equation} \label{last}
R_{\sigma}/R_{\sigma'} \s W^*(\sigma') = W(\sigma)/W(\sigma') \s
\bar{W}/W(\sigma') \hookrightarrow X_{M'}(\phi)/X(\sigma').
\end{equation}
Here we note that the isomorphism $\bar{W} \s W(\sigma)$ comes from Lemma \ref{lemma by
goldberg} and the local Jacquet-Langlands correspondence (cf. proof of
Proposition \ref{analogue of goldberg lemma}).

\begin{rem}
Let $\phi : W_F \times SL_2(\CC) \rightarrow \widehat{M}= \widehat{M'}$ be
an elliptic tempered $L$-parameter. Assume that the $L$-packet
$\Pi_{\phi}(M')$ is a singleton. Then, by Remark \ref{rem for singleton} and \eqref{last}, we have $R_{\sigma} =
R_{\sigma'}.$
\end{rem}

\end{document}